\newcommand{\GT}{${\mathcal T}_F$}
\newcommand{\calA}{{\mathcal A}}
\newcommand{\calB}{{\mathcal B}}
\newcommand{\calX}{{\mathcal X}}
\newcommand{\reals}{\mathrm{I\! R}}
\newcommand{\E}{\mathrm{I\! E}}
\newcommand{\Ind}{\mathbf{1}}
\renewcommand{\Pr}{\mathbf{P}}
\renewcommand{\E}{\mathbf{E}}
\renewcommand{\vec}{\mathbf}
\renewcommand{\reals}{\mathbf{R}}
\newcommand{\mse}{\hbox{\textup{\textsf{MSE}}}}
\newcommand{\tr}{\mathrm{trace}}
\newcommand{\cov}{cov}
\begin{document}

\title{Parameter Estimation for Thurstone Choice Models~\thanks{A preliminary version of this work was published in ICML 2016.}}

\author{\name Milan Vojnovi\' c\email m.vojnovic@lse.ac.uk\\
       \addr Department of Statistics\\
			 London School of Economics (LSE)\\
       London, United Kingdom\\
       \AND
       \name Se-Young Yun \email yunseyoung@gmail.com\\
       \addr Los Alamos National Laboratory\\
       New Mexico
			}

\editor{x}

\maketitle

\begin{abstract} 
We consider the estimation accuracy of individual strength parameters of a Thurstone choice model when each input observation consists of a choice of one item from a set of two or more items (so called top-1 lists). This model accommodates the well-known choice models such as the Luce choice model for comparison sets of two or more items and the Bradley-Terry model for pair comparisons.   

We provide a tight characterization of the mean squared error of the maximum likelihood parameter estimator. We also provide similar characterizations for parameter estimators defined by a rank-breaking method, which amounts to deducing one or more pair comparisons from a comparison of two or more items, assuming independence of these pair comparisons, and maximizing a likelihood function derived under these assumptions. We also consider a related binary classification problem where each individual parameter takes value from a set of two possible values and the goal is to correctly classify all items within a prescribed classification error. 

The results of this paper shed light on how the parameter estimation accuracy depends on given Thurstone choice model and the structure of comparison sets. In particular, we found that for unbiased input comparison sets of a given cardinality, when in expectation each comparison set of given cardinality occurs the same number of times, for a broad class of Thurstone choice models, the mean squared error decreases with the cardinality of comparison sets, but only marginally according to a diminishing returns relation. On the other hand, we found that there exist Thurstone choice models for which the mean squared error of the maximum likelihood parameter estimator can decrease much faster with the cardinality of comparison sets. 

We report empirical evaluation of some claims and key parameters revealed by theory using both synthetic and real-world input data from some popular sport competitions and online labor platforms.      
\end{abstract} 

\section{Introduction}
\label{intro}
We consider the statistical inference problem of estimating individual strength or skill parameters of items based on observed choices of items from sets of two or more items. This accommodates the case of pair comparisons as a special case, where each comparison set consists of two items. In our more general case, each observation consists of a comparison set of two or more items and the identity of the chosen item from this set. In other words, each observation is a partial ranking, which is often referred to as a top-1 list. Many applications are accommodated by this framework, e.g., choices indicated by user clicks in various information retrieval systems, outcomes of single-winner contests in crowdsourcing services such as TopCoder or Taskcn, outcomes of hiring decisions where one applicant is hired among those who applied for a job, e.g., in online labour markets such as Fiverr and Upwork, as well as numerous sport competitions and online gaming platforms. 

We consider the parameter estimation for the statistical choice model known as the \emph{Thurstone choice} model; also referred to as the \emph{random utility model}. According to the Thurstone choice model, items are associated with latent performance random variables that are independent across different items and different comparisons. For any given comparison set, the choice is the item from this set with the largest performance random variable. For any given item, the performance random variable is equal to the sum of a strength parameter, whose value can be specific to this item, and a noise random variable according to a given cumulative distribution function. The values of the strength parameters are unknown and have to be estimated from the observed comparisons, and the distribution of noise random variables is assumed to be known. The Thurstone choice model accommodates many known choice models by admitting different distribution for noise random variables, e.g., Luce choice model (\cite{L59}) for comparison sets of two or more items, and its special case for pair comparisons, often referred to as the Bradley-Terry model (\cite{BT54}). 

In this paper, we study the accuracy of the maximum likelihood estimator of the parameter of the Thurstone choice model. Our goal is to characterize the accuracy of the maximum likelihood estimator and shed light on how it depends on the given Thurstone choice model and properties of the observed input data such as the number of observations and the structure of comparison sets. In particular, we consider the following statistical inference question. Suppose that the input observations are such that all comparison sets are of the same cardinality $k\geq 2$ and are unbiased, meaning that in expectation every comparison set of given cardinality occurs the same number of times in the input data. Then, we would like to understand how does the accuracy of the maximum likelihood estimator of the strength parameters depend on the cardinality of comparison sets. Notice that from any comparison set of cardinality $k$, we can deduce at most $k(k-1)/2$ pair comparisons. Intuitively, we would expect that the parameter estimation accuracy would increase with the cardinality of comparison sets. However, it is not a priori clear how fast the accuracy would improve and whether any significant gains can be achieved by increasing the sizes of comparison sets. Moreover, it is also not a priori clear whether or not there can be any significant difference between different Thurstone choice models, with respect to how the parameter estimation accuracy is related to the cardinality of comparison sets. We also consider these questions for parameter estimators that are derived by rank breaking methods, which amount to deducing one or more pair comparisons from each comparison of two or more items, assuming independence of these pair comparisons, and defining the estimator as the maximum likelihood estimator under these assumptions.  

The main contributions of this paper can be summarized as follows. 

We provide upper bounds on the mean squared error of the maximum likelihood estimator, and a lower bound that establishes their minimax optimality. We show that the effect of the structure of comparison sets on the mean squared error is captured by one key parameter: algebraic connectivity of a suitably defined weighted-adjacency matrix. The elements of this matrix correspond to distinct pairs of items and are equal to a weighted sum of the number of input comparisons of different cardinalities that contain the corresponding pair of items, where the weights are specific to given Thurstone choice model. 

For the statistical inference question of how the estimation accuracy improves with the cardinality of comparison sets, we derive a tight characterization of the mean squared error in terms of the cardinality of comparison sets (Corollary~\ref{cor:ksize}). This characterization reveals that for a broad class of Thurstone choice models, which includes the well-known cases such as the Luce choice model, there is a diminishing returns decrease of the mean squared error with the cardinality of comparison sets. For this class of Thurstone choice models, the mean squared error tends to be largely insensitive to the cardinality of comparison sets. On the other hand, we show that there exist Thurstone choice models for which the mean squared error decreases much faster with the cardinality of comparison sets. Perhaps suprisingly, in these cases, the amount of information extracted from a comparison set of cardinality $k$ is in the order of $k^2$ independent pair comparisons, which yields a $1/k^2$ reduction of the mean squared error of the maximum likelihood estimator. Section~\ref{sec:disc} provides more discussion.   

We consider two natural rank-breaking methods, one that deduces $k-1$ pair comparisons and one that deduces $1$ pair comparison from a comparison set of cardinality $k$ (Section~\ref{sec:rankbreaking}). We derive mean squared error upper bounds when choices are according to the Luce choice model for these two rank-breaking methods in Theorem~\ref{thm:break-1} and Theorem~\ref{thm:break}, respectively. These results show that both estimators are consistent. Interestingly, both mean squared error upper bounds are equal to that of the maximum likelihood estimator up to a constant factor. Hence, they both inherit all the properties that we established to hold for the mean squared error upper bound for the maximum likelihood estimator.   

We also consider a binary classification problem where all strength parameters associated with items take one of two possible values (separating items into two classes), and the goal is to correctly classify each item within a prescribed probability of classification error (Section~\ref{sec:classy}). We identify sufficient conditions for correctness of a simple point score classification algorithm (Theorem~\ref{thm:clustering-example}) and establish their tightness (Theorem~\ref{thm:clustering-low}). These conditions are of the same form as those that we imposed for deriving upper bounds on the mean squared error of the maximum likelihood parameter estimator.  

We present experimental results using both simulation and real-world data (Section~\ref{sec:exp}). In particular, we validate the claim that the mean squared error can decrease with the cardinality of comparison sets in a qualitatively different way depending on the given Thurstone choice model. We also evaluate algebraic connectivity of weighted-adjacency matrices for several real-world input data, demonstrating that it can cover a wide range of values depending on specific application scenario.

\subsection{Related Work}
\label{sec:related}

A model of comparative judgement for pair comparisons was introduced by \cite{T27}, which is a special case of a model that we refer to as a Thurstone choice model, for the case of pair comparisons and Gaussian random noise variables. A statistical model of pair comparisons that postulates that an item is chosen from a set of two items with probability proportional to the strength parameter of this item was introduced by \cite{Z29}, and was then popularized by the work of \cite{BT52,BT54} and others, and is often referred to as the Bradley-Terry model. The statistical model of choice where for any set of two or more items an item is chosen with probability proportional to its strength parameter was shown to be a unique model satisfying a set of axioms introduced by \cite{L59}, and is referred as the Luce choice model. The Bradley-Terry model is the special case of the Luce choice model for pair comparisons. The choice probabilities of the Luce choice model correspond to those of a Thurstone choice model with noise random variables according to a double-exponential distribution. Relationships between the Luce choice model and Thurstone choice model were studied by \cite{Y77}. A statistical model for full ranking outcomes (the outcome of a comparison is an ordered list of the compared items) where the ranking is in the order of sampling of items without replacement from the set of compared items and the sampling probabilities are proportional to the strengths of items is referred as the Plackett-Luce model (\cite{L59} and \cite{P75}).  

The Thurstone choice models have been used in the design of several popular skill rating systems, e.g., Elo rating system by \cite{E78} used for skill rating of chess players and in other sports, and TrueSkill by \cite{GMH07} used for skill rating of gamers of a popular online gaming platform. All these models are instances of Thurstone models, and are special instances of generalized linear models, see, e.g., \cite{NW72}, \cite{MN89}, and Chapter~9 in ~\cite{M12}. An exposition to the principles of skill rating systems is available in Chapter~9~in~\cite{V16}. 

The parameter estimation problem for the Bradley-Terry model of pair comparisons has been studied by many. The iterative methods for computing a maximum likelihood parameter estimate have been studied in the early work by \cite{hunter2004mm} and the recent work by \cite{maystre2015fast}. \cite{SY99} shown that the maximum likelihood parameter is consistent and asymptotically normal as the number of items $n$ grows large, under assumption that each pair is compared the same number of times and that the true parameter vector is such that the maximum distance between any of its coordinates is $o(\log(n))$. \cite{M99} studied Thurstonian model parameter estimation with noise random variables according to a Gaussian distribution.

The accuracy of the parameter estimation for various instances of Thurstone models has been studied in recent work. \cite{negahban2012rank} found a sufficient number of input pair comparisons to achieve a given mean squared error of a parameter estimator for the Bradley-Terry model and the input comparisons such that in expectation each distinct pair is compared the same number of times. In particular, they shown that under given assumptions, it suffices to observe $O(n \log(n))$ input pair comparisons. \cite{RA14} studied a statistical convergence property of ranking aggregation algorithms for pair comparisons not only for the Bradley-Terry model but also under some more general conditions referred to as low-noise and generalized low-noise. \cite{hajek2014minimax} provided a characterization of the mean squared error of the maximum likelihood parameter estimate for the Plackett-Luce model of full ranking outcomes. This work found that the algebraic connectivity of a weighted-adjacency matrix captures the effect of the structure of input comparison sets on the mean squared error of the maximum likelihood parameter estimator. \cite{SBBPRW16} established similar characterization results for the case of pair comparisons according to Thurstone choice models.   

Our work differs from previous work in that we consider characterization of the estimation accuracy for a general class of Thurstone choice models for arbitrary sizes of comparisons. Specifically, our work provides a first characterization of the estimation accuracy with respect to the cardinality of comparison sets for unbiased input comparisons, which reveals an insight into the fundamental limits of statistical inference for given cardinality of comparison sets. It is a folklore that different models of pair comparisons yield similar performance with respect to the prediction error, e.g.~\cite{S92}, which suggests that the precise choice of a Thurstone choice model does not matter much in applications. Our results show that there can be significant difference between two Thurstone choice models with respect to the statistical inference of model parameters. 

Parameter estimators derived by various rank-breaking methods have been studied by various authors. For instance, \cite{soufiani2013generalized} and \cite{soufiani14} studied rank-breaking methods for full ranking data and \cite{khetan2016data} studied rank-breaking methods for partial rankings. Recently, \cite{khetan2016} characterized a trade-off between the amount of information used per comparison and the mean squared error of a parameter estimate based on rank breaking. Our work is different in that we are interested in top-1 list observations and the effect of the structure of comparison sets for rank-breaking methods. For the top-1 list observations and any given comparison structure, our work provides upper bounds for the mean squared error of two natural ranking breaking methods and shows the optimality of them.

\subsection{Organization of the Paper}

Section~\ref{sec:defs} introduces problem formulation, some basic concepts and key technical results used to establish our main results. Section~\ref{sec:mse} provides a characterization of the mean squared error for the maximum likelihood parameter estimation, including both upper and lower bounds. Section~\ref{sec:rankbreaking} shows the same type of characterizations for two rank-breaking based parameter estimators. Section~\ref{sec:classy} establishes tight conditions for correct classification of items, when the strength parameters of items are of two possible types. Section~\ref{sec:disc} discusses how the estimation accuracy depends on the cardinality of comparison sets. Section~\ref{sec:exp} contains results of our experiments. Finally, Section~\ref{sec:conc} concludes the paper. Appendix contains some background facts and proofs of our theorems.

\section{Problem Formulation and Notation}
\label{sec:defs}

Let $N=\{1,2,\dots,n\}$ be a set of two or more items. The input data consists of a sequence of one or more observations $(S_1,y_1)$, $(S_2,y_2)$, $\ldots$, $(S_m,y_m)$, where each observation $t$ consists of (a) a \emph{comparison set} $S_t\subseteq N$ and (b) a \emph{choice} of an item $y_t$ from $S_t$. The case of \emph{pair comparisons} is accommodated as a special case when each comparison set consists of two items. Let $w_{i,S}$ denote the number of observations for which the comparison set is $S$ and the chosen item is $i\in S$. With a slight abuse of notation, for pair comparisons, let $w_{i,j}$ be the number of observations for which the comparison set is $\{i,j\}$ and the chosen item is $i$.

A Thurstone choice model, denoted as \GT, is defined by a cumulative distribution function $F$ of a zero mean random variable and parameter vector $\theta = (\theta_1,\theta_2,\ldots,\theta_n)$ that takes value in $\Theta\subseteq \reals^n$. We can interpret $\theta_i$ as the strength of item $i\in N$. The cumulative distribution function $F$ is assumed to have a density function, denoted by $f$.

According to Thurstone choice model \GT, for any given sequence of comparison sets, choices are independent random variables according to the following distribution: conditional on that the comparison set is $S$, with $k$ denoting the cardinality of $S$, the distribution of choice is 
\begin{equation}
p_{i,S}(\theta) = p_{k}(\vec{x}_{i,S}(\theta)), \hbox{ for } i\in S
\label{equ:gtmodel}
\end{equation} 
where $\vec{x}_{i,S}(\theta)$ is a vector in $\reals^{k-1}$ with elements $\theta_i-\theta_j$, for $j\in S\setminus\{i\}$, and
\begin{equation}
p_k(\vec{x}) = \int_{\reals} \left(\prod_{v=1}^{k-1} F(x_v+z)\right) f(z)dz, \hbox{ for } \vec{x} \in \reals^{k-1}.
\label{equ:pkfunc}
\end{equation}

With a slight abuse of notation, for the case of pair comparisons, we let $p_{i,j}(\theta)$ denote the probability that item $i$ is chosen from the comparison set $\{i,j\}$. In this case, we have 
$$
p_{i,j}(\theta)=p_2(\theta_i-\theta_j)
$$ 
where
$$
p_2(x) = \int_{\reals}F(x+z) f(z) dz, \hbox{ for } x\in \reals.
$$

A Thurstone choice model \GT\ corresponds to the following \emph{probabilistic generative model} of choice, also referred to as a \emph{random utility model}. The items of each comparison set are associated with latent \emph{performance random variables}, which are independent across different items and different comparison sets. For any comparison set $S$ and item $i\in S$, the performance random variable $X_i$ is equal to the sum of a strength parameter $\theta_i$ and a noise random variable with distribution $F$. The given probabilistic generative model assumes that for any given comparison set $S$, the chosen item is the one with the largest performance. Hence, the distribution of choice is $p_{i,S}(\theta) = \Pr[X_i \geq \max_{j\in S}X_j]$ for $i\in S$, which can be expressed as asserted in (\ref{equ:gtmodel}).

It is noteworthy that under a Thurstone choice model, the probability distribution of choice depends only on pairwise differences of the strength parameters. This implies that the probability distribution of choice is shift-invariant with respect to the parameter vector. In order to allow for identifiability of the parameter vector, we assume that $\theta$ is such that $\sum_{i=1}^n \theta_i =0$.

We refer to several examples of Thurstone choice models as follows: (i) \emph{Gaussian distribution}  : $f(x) = 1/(\sqrt{2\pi}\sigma)\, \exp(-x^{2/(2\sigma^2}))$ with variance $\sigma^2$; (ii) \emph{Double-exponential distribution}: $F(x) = \exp(-\exp(-(x+\beta\gamma)/\beta))$ with parameter $\beta > 0$ and $\gamma$ denoting the Euler-Mascheroni constant, which has variance $\sigma^2 = \pi^2 \beta^2/6$; (iii) \emph{Laplace distribution}: $F(x) = 1/2\,\exp(x/\beta)$, for $x<0$, and $F(x) = 1-1/2\, \exp(-x/\beta)$, for $x \geq 0$, with parameter $\beta$, which has variance $\sigma^2 = 2 \beta^2$; and (iv) \emph{Uniform distribution}: $f(x) = 1/(2a)$ for $x\in [-a,a]$, which has variance $\sigma^2 = a^2/3$.

For the general case of comparison sets of two or more items, the distribution of choice admits an explicit form only for some special cases. For example, when noise random variables are according to a double-exponential distribution, we have
$$
p_k(\vec{x}) = \frac{1}{1+\sum_{i=1}^{k-1}e^{-x_i/\beta}}, \hbox{ for } \vec{x} \in \reals^{k-1}.
$$
This amounts to the choice probabilities $p_{i,S}(\theta) = \exp(\theta_i/\beta)/\sum_{v\in S} \exp(\theta_v/\beta)$, for $i \in S$ and $S\subseteq N$, which under suitable re-parametrisation corresponds to the well-known Luce choice model. In particular, for pair comparisons, we have $p_2(x) = 1/(1+\exp(-x/\beta))$, which under suitable re-parametrisation corresponds to the well-known Bradley-Terry model. For pair comparisons, the choice probabilities admit an explicit form also for some other Thurstone choice models; for example, when noise has a Gaussian distribution, we have $p_2(x) = \Phi(x/(2\sigma))$ where $\Phi$ is the cumulative distribution function of a standard normal random variable. 

\paragraph{Maximum Pseudo Likelihood Estimation} We consider parameter estimators that are defined as maximizers of a pseudo log-likelihood function $\widetilde{\ell}: \Theta \rightarrow \reals$. We refer to the parameter estimator $\widehat{\theta}\in \arg\max_{\theta \in \Theta}\widetilde{\ell}(\theta)$ as a maximum pseudo likelihood estimator.  

We devote a special attention to maximum likelihood estimator, defined as a maximizer of the log-likelihood function, which for a Thurstone choice model is given by  
\begin{equation}
\ell(\theta) = \sum_{t=1}^m \log(p_{y_t,S_t}(\theta)).
\label{equ:loglik0}
\end{equation}
The log-likelihood function can be written as follows
\begin{equation}
\ell(\theta) = \sum_{S\subseteq N}\sum_{i\in S} w_{i,S}\log(p_{|S|}(\vec{x}_{i,S}(\theta))) + \hbox{const}
\label{equ:loglik}
\end{equation}
where recall $w_{i,S}$ is the number of observations for which the comparison set is $S$ and $i$ is the choice from $S$. In particular, for pair comparisons, we have
\begin{equation}
\ell(\theta) = \sum_{i=1}^n\sum_{j=1}^n w_{i,j} \log \left( p_2(\theta_i - \theta_j) \right)  + \hbox{const}.
\label{eq:MLtheta}
\end{equation}  

Evaluating the value of the log-likelihood function in (\ref{equ:loglik}) for given parameter vector requires evaluating a sum that in the worst-case consists of exponentially many elements in $n$ (all possible combinations of two or more elements from the ground set of $n$ elements). On the other hand, for pair comparisons, the log-likelihood function in (\ref{eq:MLtheta}) is a sum of at most $n^2$ elements; thus, polynomially many elements in $n$. A common approach to reduce computational complexity is to use the so-called \emph{rank breaking}, which amounts to deducing pair comparisons from any given comparison set of two or more items, and assuming that these pair comparisons are independent (if this is not the case). Using these pair comparisons, one then defines a pseudo log-likelihood function as the log-likelihood function under the assumption that the deduced pair comparisons are independent.   

We shall consider two natural rank-breaking methods. The first rank-breaking method deduces $k-1$ pair comparisons from each comparison set of $k$ items, by taking all pairs that consist of the chosen item and each other item in the given comparison set. The pseudo log-likelihood function in this case is given by
\begin{equation}
\ell_{k-1}(\theta) = \sum_{t=1}^m \sum_{v\in S_t\setminus \{y_t\}} \log(p_{y_t,v}(\theta)).
\label{equ:pseudolikk}
\end{equation}
The second rank-breaking method that we consider deduces $1$ pair comparison from each comparison set of $k$ items, by taking a pair that consists of the chosen item and a randomly picked item from the remaining set of items in the given comparison set. The pseudo log-likelihood function in this case is given by
\begin{equation}
\ell_1(\theta) = \sum_{t=1}^m \log(p_{y_t,z_t}(\theta)).
\label{equ:pseudolik1}
\end{equation}

The first rank-breaking method uses maximum amount of information that is contained in a comparison; by observing choice of one item from a comparison set of $k$ items, we can indeed deduce at most $k-1$ pair comparisons (between the chosen item and each other item in the given comparison set). In general, these pair comparisons are not mutually independent. The second rank-breaking method is conservative in deducing only one pair comparison from each comparison set of two or more items.  

\paragraph{Parameter Estimation Accuracy} We study the accuracy of a maximum pseudo log-likelihood estimator $\widehat \theta$ of the true parameter vector $\theta^\star$ by using the \emph{mean squared error} defined as follows:
\begin{equation}
\mse(\widehat \theta, \theta^\star) = \frac{1}{n}\|\widehat \theta - \theta^\star\|_2^2.
\label{equ:msedef}
\end{equation}

We also consider the probability of classification error for the case when the strength parameters belong to one of two classes and the goal is to correctly classify each item.  

\subsection{Eigenvalues, Adjacency, and Laplacian Matrices}

Here we review some basic definitions that are used throughout the paper. We denote eigenvalues of a matrix $\vec{A}\in \reals^{n\times n}$ as $\lambda_1(\vec{A}), \lambda_2(\vec{A}), \ldots, \lambda_n(\vec{A})$. By convention, we assume that $\lambda_1(\vec{A})\leq \lambda_2(\vec{A})\leq \cdots \leq \lambda_n(\vec{A})$. The \emph{spectral norm} $||\vec{A}||_2$ of matrix $\vec{A}\in \reals^{n\times n}$ is defined by $||\vec{A}||_2 = \sqrt{\lambda_n(\vec{A}^\top \vec{A})}$. The spectral norm of $\vec{A}$ is induced by the Euclidean vector norm as follows $||\vec{A}||_2 = \max\{||\vec{A} \vec{x}||_2: \vec{x}\in \reals^n, ||\vec{x}||=1\}$. If $\vec{A}\in \reals^{n\times n}$ is a real symmetric matrix, then $||\vec{A}||_2 = \lambda_n(\vec{A})$. 

For any \emph{weighted-adjacency} matrix $\vec{A}\in \reals^{n\times n}_+$, we consider a \emph{Laplacian} matrix $L_{\vec{A}}$ defined by
$$
L_{\vec{A}} = \mathrm{diag}(\vec{A}\vec{1}) - \vec{A}
$$ 
where for any given vector $\vec{a}$, $\mathrm{diag}(\vec{a})$ denotes the diagonal matrix with diagonal $\vec{a}$. 

For any weighted-adjacency matrix $\vec{A}\in \reals^{n\times n}_+$, we refer to $\lambda_2(L_{\vec{A}})$ as the \emph{Fiedler value} of $\vec{A}$ (\cite{F73,F89}). The Fielder eigenvalue of a weighted-adjacency matrix quantifies its algebraic connectivity. A weighted-adjacency matrix $\vec{A}$ corresponds to a connected graph if and only if it has strictly positive Fielder value, i.e., $\lambda_2(L_{\vec{A}}) > 0$.

For any given observations and given \emph{weight function} $w:\{1,2,\ldots,n\}\rightarrow \reals_+$, we define the weighted-adjacency matrix $\vec{M}_w$ as follows. Let $m_{i,j}(k)$ be the number of comparisons of cardinality $k$ that contain the pair of items $\{i,j\}$. Then, we define $\vec{M}_w$ to be the matrix in $\reals_+^{n\times n}$ with zero diagonal elements and other elements given by 
\begin{equation}
m_{i,j} = \frac{n}{m}\sum_{k\geq 2} w(k) m_{i,j}(k).
\label{equ:weightmatrix}
\end{equation}
With a slight abuse of notation, let $\vec{M}_a$ be the weighted-adjacency matrix defined for the weight function that takes constant value $a > 0$, and let $\vec{M}$ be written in lieu of $\vec{M}_1$. 

If all comparison sets have identical cardinalities, then each element of the weighted-adjacency matrix is equal to the number of comparisons that contain the corresponding pair of items up to a multiplicative factor. The factor $n/m$ can be interpreted as a normalization with the mean number of comparison sets per item. This normalization is admitted so that for the canonical case of pair comparisons when each pair is compared the same number of times, $\lambda_2(L_{\vec{M}_a})$ is a constant independent of the number of observations $m$ and the number of items asymptotically for large $n$. Indeed, in this case, $\lambda_2(L_{\vec{M}_a}) = \cdots = \lambda_n(L_{\vec{M}_a}) = (1-1/n)a$, which is equal to constant $a$, asymptotically for large $n$.

We say that comparison sets are \emph{unbiased} if for any given cardinality, each set of the given cardinality occurs the same number of times. In particular, for pair comparisons, this means that each distinct pair is compared the same number of times. For any unbiased comparison sets, the weighted-adjacency matrix can be expressed as follows. Let $\mu(k)$ be the fraction of comparison sets of cardinality $k$. Then, for every integer $k\geq 2$ and pair of items $\{i,j\}$, $m_{i,j}(k) = \left(\binom{n-2}{k-2}/\binom{n}{k}\right) \mu(k) m = \left(k(k-1)/[n(n-1)]\right) \mu(k) m$. Hence, for every pair of items, we have
\begin{equation}
m_{i,j} = \frac{1}{n-1}\sum_{k\geq 2} w(k) k(k-1)\mu(k).
\label{equ:unbiased}
\end{equation}
It follows that for any unbiased comparison sets, we have
\begin{equation}
\lambda_2(L_{\vec{M}_w}) = \left(1-\frac{1}{n}\right)\sum_{k\geq 2} w(k) k(k-1)\mu(k)
\label{equ:lambda2L}
\end{equation}
which is a constant independent of $n$, asymptotically for large $n$. 

We shall also consider comparison sets that are assumed to be an independent random sequence according to a given distribution. Specifically, we shall consider the case where all comparison sets are of the same cardinality, and are independent samples according to uniform random sampling without replacement from the set of all items. We denote with $\overline{\vec{M}}_w$ the \emph{expected weighted-adjacency} matrix, where the expectation is with respect to the distribution of the sequence of comparison sets. We say that comparison sets are \emph{a priori unbiased} if all non-diagonal elements of $\overline{\vec{M}}_w$ are equal.  

\subsection{A Key Lemma and Probability Tail Bounds}

All upper bounds for the mean squared error of a maximum pseudo log-likelihood estimator in this paper are established by using the following key lemma.

\begin{lemma} Suppose that $g:\reals^n \rightarrow \reals$ satisfies (i) $\nabla^2 g(\vec{\theta})\vec{1} = \vec{0}$ and (ii) $\lambda_2(\nabla^2g(\vec{\theta}))>0$ for all $\vec{\theta} \in \Theta$, where $\Theta = \{\vec{\theta}\in [-b,b]^n: \vec{\theta}^\top \vec{1} = 0\}$ for $b > 0$. Let $\vec{\theta}^\star$ be an arbitrary vector in $\Theta$ and $\widehat{\vec{\theta}}\in \arg\min_{\vec{\theta}\in \Theta} g(\vec{\theta})$. Then, we have
$$
\|\widehat{\vec{\theta}} -\vec{\theta}^\star \|_2 \le 
\frac{2\left\|\nabla g(\vec{\theta}^\star) \right\|_2}{\min_{\vec{\theta}\in \Theta}\lambda_2(\nabla^2 g(\vec{\theta}))}.
$$
\label{lem:mle-taylor}
\end{lemma}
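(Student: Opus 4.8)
The plan is to exploit the two structural hypotheses on $g$: that its Hessian annihilates $\vec 1$ everywhere on $\Theta$, and that its second-smallest eigenvalue is strictly positive on $\Theta$. The first hypothesis means all the action happens on the subspace $\vec 1^\perp$, which is exactly the subspace containing $\Theta$ after the sum-zero normalization, so both $\widehat{\vec\theta}$ and $\vec\theta^\star$ live there and so does $\widehat{\vec\theta}-\vec\theta^\star$. The second hypothesis then says $g$ is strongly convex along that subspace, with modulus at least $c := \min_{\vec\theta\in\Theta}\lambda_2(\nabla^2 g(\vec\theta))$, which we should check is positive — it is, since $\Theta$ is compact and $\vec\theta\mapsto\lambda_2(\nabla^2 g(\vec\theta))$ is continuous (eigenvalues depend continuously on the matrix and $\nabla^2 g$ is continuous) and strictly positive, so its minimum over the compact set $\Theta$ is attained and strictly positive.

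First I would write the second-order Taylor expansion of $g$ around $\vec\theta^\star$ evaluated at $\widehat{\vec\theta}$: there is a point $\vec\xi$ on the segment $[\vec\theta^\star,\widehat{\vec\theta}]\subseteq\Theta$ (the segment stays in $\Theta$ because $\Theta$ is convex) with
$$
g(\widehat{\vec\theta}) = g(\vec\theta^\star) + \nabla g(\vec\theta^\star)^\top(\widehat{\vec\theta}-\vec\theta^\star) + \tfrac12 (\widehat{\vec\theta}-\vec\theta^\star)^\top \nabla^2 g(\vec\xi)(\widehat{\vec\theta}-\vec\theta^\star).
$$
Since $\widehat{\vec\theta}$ is a minimizer, $g(\widehat{\vec\theta})\le g(\vec\theta^\star)$, so the first-order term plus half the quadratic term is $\le 0$. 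Next I would lower-bound the quadratic term: the vector $\vec\delta:=\widehat{\vec\theta}-\vec\theta^\star$ satisfies $\vec\delta^\top\vec 1 = 0$, and on $\vec 1^\perp$ the smallest eigenvalue of $\nabla^2 g(\vec\xi)$ relevant to us is $\lambda_2(\nabla^2 g(\vec\xi))$ — because hypothesis (i) forces $\vec 1$ to be an eigenvector with eigenvalue $0$, so the remaining eigenvalues $\lambda_2\le\cdots\le\lambda_n$ all correspond to eigenvectors in $\vec 1^\perp$ — hence $\vec\delta^\top\nabla^2 g(\vec\xi)\vec\delta \ge \lambda_2(\nabla^2 g(\vec\xi))\|\vec\delta\|_2^2 \ge c\|\vec\delta\|_2^2$. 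Combining, $\nabla g(\vec\theta^\star)^\top\vec\delta + \tfrac12 c\|\vec\delta\|_2^2 \le 0$, and then Cauchy–Schwarz on the first term gives $\tfrac12 c\|\vec\delta\|_2^2 \le \|\nabla g(\vec\theta^\star)\|_2\,\|\vec\delta\|_2$, whence $\|\vec\delta\|_2 \le 2\|\nabla g(\vec\theta^\star)\|_2/c$, which is the claim.

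One technical point I would need to be slightly careful about is that $\lambda_2$ is not linear, so I cannot just say "$\lambda_2$ of the Hessian at the midpoint is $\ge c$" by an averaging argument; rather I use that the single intermediate point $\vec\xi$ from Taylor's theorem lies in $\Theta$ and invoke the uniform lower bound $c$ at that point. The one place to be honest about is whether the eigenvalue-based lower bound $\vec\delta^\top H\vec\delta\ge\lambda_2(H)\|\vec\delta\|_2^2$ for $\vec\delta\perp\vec 1$ really follows from hypothesis (i): it does, because (i) holds at \emph{every} $\vec\theta\in\Theta$, in particular at $\vec\xi$, so $\vec 1$ is genuinely the eigenvector for the zero eigenvalue of $\nabla^2 g(\vec\xi)$ and the Courant–Fischer / Rayleigh-quotient characterization restricted to $\vec 1^\perp$ gives exactly $\lambda_2$. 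The main obstacle, such as it is, is marshalling these observations cleanly; there is no deep difficulty — the lemma is essentially the standard strong-convexity-implies-estimation-error bound, adapted to the degenerate direction $\vec 1$.
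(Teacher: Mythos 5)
Your proposal is correct and follows essentially the same route as the paper's proof: a second-order Taylor expansion at $\vec{\theta}^\star$, the inequality $g(\widehat{\vec{\theta}})\le g(\vec{\theta}^\star)$, a Rayleigh-quotient lower bound $\Delta^\top\nabla^2 g(\cdot)\Delta\ge\lambda_2(\nabla^2 g(\cdot))\|\Delta\|_2^2$ exploiting $\Delta\perp\vec{1}$, and Cauchy--Schwarz. The only cosmetic difference is that you use the Lagrange form of the remainder at a single intermediate point $\vec{\xi}$, whereas the paper lower-bounds the remainder by a minimum over the segment; both yield the same bound.
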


We shall apply this lemma to the case where $g$ is a negative pseudo log-likelihood function, $\widehat{\theta}$ is a maximizer of the pseudo log-likelihood function, and $\theta^\star$ is the true parameter vector. We upper bound the mean squared error by the following two steps: 
\begin{itemize}
\item[{\bf S1}] find $\alpha > 0$ such that $\left\|\nabla g(\vec{\theta}^\star) \right\|_2 \leq \alpha$, and 
\item[{\bf S2}] find $\beta > 0$ such that $\min_{\vec{\theta}\in \Theta}\lambda_2(\nabla^2 g(\vec{\theta})) \geq \beta$
\end{itemize}
which imply the upper bound $\|\widehat{\vec{\theta}} -\vec{\theta}^\star \|_2 \leq 2\alpha / \beta$. 

All our proofs of the mean squared estimation error upper bounds follow the above two-step procedure, including the proof of Theorem~\ref{thm:mle} in Section~\ref{sec:pairs} and other proofs provided in Appendix. 

In step {\bf S1}, $\nabla g(\vec{\theta}^\star)$ is a sum of random vectors. We will make use of the following vector version of Azuma-Hoeffding bound (Theorem~1.8 in~\cite{H03}) for a sum of random vectors.

\begin{lemma}[vector Azuma-Hoeffding bound] Suppose that $S_m = \sum_{t=1}^m X_t$ is a martingale where $X_1, X_2, \ldots, X_m$ are random variables that take values in $\reals^n$ and are such that $\E[X_t] = \vec{0}$ and $\|X_t\|_2 \leq \sigma$ for all $t\in \{1,2,\ldots,m\}$, for $\sigma > 0$. Then, for every $x > 0$,
$$
\Pr\left[\left\|S_m\right\|_2 \geq x\right] \leq 2e^2 e^{-\frac{x^2}{2m \sigma^2}}.
$$
\label{prop:azuma-hoeffding}
\end{lemma}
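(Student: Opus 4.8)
The plan is to reduce the vector-valued statement to a collection of scalar martingale inequalities by projecting $S_m$ onto directions, and then to control the supremum over directions via a covering (net) argument. First I would note that $\|S_m\|_2 = \sup_{\|u\|_2=1} \langle u, S_m\rangle$, and that for each fixed unit vector $u$, the sequence $\langle u, X_1\rangle, \langle u, X_2\rangle, \dots$ is a scalar martingale difference sequence with $\E[\langle u, X_t\rangle \mid \mathcal{F}_{t-1}] = 0$ and $|\langle u, X_t\rangle| \le \|u\|_2 \|X_t\|_2 \le \sigma$ by Cauchy--Schwarz. Hence the classical (scalar) Azuma--Hoeffding inequality gives, for every fixed $u$ and every $x>0$,
$$
\Pr\!\left[\langle u, S_m\rangle \ge x\right] \le e^{-\frac{x^2}{2m\sigma^2}}.
$$

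Next I would discretize the sphere: fix an $\varepsilon$-net $\mathcal{N}$ of the unit sphere $\{u\in\reals^n:\|u\|_2=1\}$ with $|\mathcal{N}| \le (1+2/\varepsilon)^n$, a standard volumetric bound. A routine approximation argument shows $\|S_m\|_2 \le \frac{1}{1-\varepsilon}\max_{u\in\mathcal{N}}\langle u, S_m\rangle$; taking $\varepsilon$ to be a small fixed constant (e.g.\ $\varepsilon = 1/2$) gives $\|S_m\|_2 \le 2\max_{u\in\mathcal{N}}\langle u, S_m\rangle$ with $|\mathcal{N}| \le 5^n$. A union bound over the net then yields
$$
\Pr\!\left[\|S_m\|_2 \ge x\right] \le |\mathcal{N}|\cdot \Pr\!\left[\langle u, S_m\rangle \ge x/2\right] \le 5^n e^{-\frac{x^2}{8m\sigma^2}},
$$
which, however, carries an undesirable $5^n$ factor; this is the main obstacle, since the target bound $2e^2 e^{-x^2/(2m\sigma^2)}$ has no dimension dependence in the prefactor.

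To remove the dimension factor I would instead follow the sharper route used in the reference (Hayes, Theorem~1.8): rather than a net, bound the moment generating function of $\|S_m\|_2$ directly. The key estimate is that for a martingale with bounded increments $\|X_t\|_2 \le \sigma$ one has, for all $\lambda>0$, $\E\!\left[e^{\lambda\|S_m\|_2}\right] \le 2e^2\, e^{m\lambda^2\sigma^2/2}$; this follows by conditioning on $\mathcal{F}_{t-1}$ and using the inequality $\E[e^{\lambda\|v+X\|_2}\mid \mathcal{F}_{t-1}] \le e^{\lambda^2\sigma^2/2}\E[e^{\lambda\|v\|_2}\mid\mathcal{F}_{t-1}]$ valid for any $\mathcal{F}_{t-1}$-measurable $v$, where the scalar factor $e^{\lambda^2\sigma^2/2}$ arises from a one-dimensional subgaussian bound on the projection of $X_t$ onto the (random but $\mathcal{F}_{t-1}$-measurable) direction of $v$, together with a small correction term absorbed into the constant $2e^2$. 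Iterating over $t=1,\dots,m$ gives the claimed MGF bound; then Markov's inequality applied to $e^{\lambda\|S_m\|_2}$ and optimizing over $\lambda = x/(m\sigma^2)$ yields
$$
\Pr\!\left[\|S_m\|_2 \ge x\right] \le 2e^2\, e^{-\frac{x^2}{2m\sigma^2}},
$$
as required. I expect the delicate point to be justifying the per-step MGF contraction for the vector norm uniformly over the conditioning; since this is precisely the content of Theorem~1.8 in~\cite{H03}, I would cite that result and present the above as the proof outline, filling in the conditioning argument only to the extent needed for self-containedness.
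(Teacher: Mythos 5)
The paper does not actually prove this lemma: it is quoted verbatim as Theorem~1.8 of \cite{H03} (Hayes's large-deviation inequality for vector-valued martingales), and no argument is given in the appendix. So the relevant comparison is between your outline and that cited result, and in the end you arrive at the same place --- you defer the decisive step to the same citation. Two remarks on the intermediate content. First, your net-argument detour is correctly self-diagnosed: projecting onto an $\varepsilon$-net and union bounding inevitably produces a $c^n$ prefactor, which cannot be massaged into the dimension-free constant $2e^2$; it is good that you recognized this rather than trying to force it through. Second, the one-step MGF contraction you assert, $\E\bigl[e^{\lambda\|v+X\|_2}\mid \mathcal{F}_{t-1}\bigr] \le e^{\lambda^2\sigma^2/2}\,\E\bigl[e^{\lambda\|v\|_2}\mid\mathcal{F}_{t-1}\bigr]$, is not true as written --- if it were, iteration plus Markov would give prefactor $1$ rather than $2e^2$. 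The obstruction is that $\|v+X\|_2$ is not $\|v\|_2$ plus a centered projection: expanding $\|v+X\|_2=\sqrt{\|v\|_2^2+2\langle v,X\rangle+\|X\|_2^2}$ shows a nonnegative second-order term of size up to $\sigma^2/(2\|v\|_2)$ that does not vanish in conditional expectation and is badly behaved when $\|v\|_2$ is small; handling it is exactly the technical content of Hayes's proof and is where the constant $2e^2$ (equivalently, the shift from $x^2$ to $(x-\sigma)^2$ in the exponent) comes from. Since you explicitly flag this as the delicate point and cite Theorem~1.8 for it, your treatment is acceptable and is in fact no less rigorous than the paper's own, but you should not present the displayed per-step inequality as something that ``follows'' from a one-dimensional subgaussian bound.
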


In step {\bf S2}, we need to find a lower bound for the second-smallest eigenvalue of the Hessian matrix $\nabla^2 g(\vec{\theta})$ for all $\theta \in \Theta$. For pair comparisons according to a Thurstone choice model or comparisons of two or more items according to the Luce choice model, $\nabla^2 g(\vec{\theta})$ is determined by comparison sets and does not depend on the choices. We can find $\beta$ from a Laplacian matrix when the comparison sets are given. In other cases, $\nabla^2 g(\vec{\theta})$ is a sum of random matrices. We will make use the following matrix version of Chernoff's bound along with properties of eigenvalues of a Laplacian matrix (which are given in Appendix~\ref{sec:back}).

\begin{lemma}[matrix Chernoff bound] Let $S_m = \sum_{t=1}^m X_t$ where $X_1, X_2, \ldots, X_m$ are random independent real symmetric matrices in $\reals^{n\times n}$ such that $\lambda_1 (X_t)\ge 0$ and, $\| X_t \|_2 \le \sigma$ for $t \in \{1,2,\ldots,m\}$, for $\sigma > 0$. Then, for $\epsilon \in [0,1)$,
$$
\Pr\left[ \lambda_{1}(S_m) \le (1-\epsilon) \lambda_1(\E[S_m]) \right] \leq n e^{-\frac{\epsilon^2\lambda_1(\E[S_m])}{2\sigma}}.
$$
\label{cor:matrix}
\end{lemma}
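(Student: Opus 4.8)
The plan is the matrix Laplace-transform (Chernoff) method, i.e.\ the noncommutative analogue of the scalar Chernoff bound for the lower tail of a sum of bounded nonnegative random variables. Write $\mu=\lambda_1(\E[S_m])\ge 0$ and fix $\theta>0$. Since the eigenvalues of $-S_m$ are the negatives of those of $S_m$, the event $\{\lambda_1(S_m)\le (1-\epsilon)\mu\}$ equals $\{\lambda_n(-S_m)\ge -(1-\epsilon)\mu\}$, and applying Markov's inequality to the nonnegative random variable $e^{\theta\lambda_n(-S_m)}=\lambda_n(e^{-\theta S_m})$ gives
\[
\Pr\!\left[\lambda_1(S_m)\le (1-\epsilon)\mu\right]\ \le\ e^{\theta(1-\epsilon)\mu}\,\E\!\left[\lambda_n(e^{-\theta S_m})\right]\ \le\ e^{\theta(1-\epsilon)\mu}\,\E\!\left[\tr\, e^{-\theta S_m}\right],
\]
where the last step uses $\lambda_n(A)\le \tr A$ for positive semidefinite $A$.

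Next I would control the matrix moment generating function $\E[\tr\, e^{-\theta S_m}]$. By independence of the $X_t$ together with Lieb's concavity theorem (equivalently, the subadditivity of matrix cumulant generating functions as in Tropp's master tail bound), $\E[\tr\, e^{-\theta S_m}]\le \tr\exp\!\big(\sum_{t=1}^m \log\E[e^{-\theta X_t}]\big)$. Now invoke the hypotheses: $\lambda_1(X_t)\ge 0$ and $\|X_t\|_2\le\sigma$ mean $0\preceq X_t\preceq \sigma I$, so the spectrum of each $X_t$ lies in $[0,\sigma]$. On that interval the convex function $x\mapsto e^{-\theta x}$ lies below its chord, $e^{-\theta x}\le 1-(1-e^{-\theta\sigma})x/\sigma$; applying this through the functional calculus and taking expectations gives $\E[e^{-\theta X_t}]\preceq I-(1-e^{-\theta\sigma})\E[X_t]/\sigma\preceq \exp\!\big(-(1-e^{-\theta\sigma})\E[X_t]/\sigma\big)$, using $I-A\preceq e^{-A}$. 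Taking operator logarithms (operator monotone), summing over $t$, and using monotonicity of $\tr\exp$ under the Loewner order yields $\E[\tr\, e^{-\theta S_m}]\le \tr\exp\!\big(-(1-e^{-\theta\sigma})\E[S_m]/\sigma\big)\le n\exp\!\big(-(1-e^{-\theta\sigma})\mu/\sigma\big)$, the last inequality because the smallest eigenvalue of $(1-e^{-\theta\sigma})\E[S_m]/\sigma$ is $(1-e^{-\theta\sigma})\mu/\sigma$.

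Combining the two displays and substituting $s=\theta\sigma>0$, the bound becomes $n\exp\!\big((\mu/\sigma)\,[\,s(1-\epsilon)-1+e^{-s}\,]\big)$. Minimizing the bracket over $s>0$ gives $s^\star=-\log(1-\epsilon)$ with value $-(1-\epsilon)\log(1-\epsilon)-\epsilon$, which is at most $-\epsilon^2/2$ for all $\epsilon\in[0,1)$ — this is the standard scalar Chernoff inequality $(1-\epsilon)^{-(1-\epsilon)}e^{-\epsilon}\le e^{\epsilon^2/2}$, obtained by comparing Taylor series. Hence $\Pr[\lambda_1(S_m)\le(1-\epsilon)\mu]\le n e^{-\epsilon^2\mu/(2\sigma)}$, as claimed (for $\epsilon=0$ the inequality is trivial since $n\ge 1$). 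The only genuinely noncommutative obstacle is the master inequality $\E[\tr\, e^{-\theta S_m}]\le \tr\exp(\sum_t\log\E[e^{-\theta X_t}])$: a scalar argument would multiply moment generating functions, which fails when the $X_t$ do not commute, and one must instead appeal to Lieb's theorem (or, at the cost of an extra factor, the Golden--Thompson route of Ahlswede--Winter). Everything else is the routine scalar Chernoff optimization and elementary operator-monotonicity facts; alternatively the statement can simply be quoted from Tropp's matrix Chernoff bound with the parameters identified.
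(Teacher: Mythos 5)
Your proposal is correct and follows essentially the same route as the paper: the paper simply cites Tropp's Theorem 5.1.1 to get the bound $n\bigl(e^{-\epsilon}/(1-\epsilon)^{1-\epsilon}\bigr)^{\lambda_1(\E[S_m])/\sigma}$ and then applies the elementary inequality $e^{-\epsilon}/(1-\epsilon)^{1-\epsilon}\le e^{-\epsilon^2/2}$, which is exactly the scalar optimization you carry out at the end. The only difference is that you open the black box and re-derive Tropp's master bound via the matrix Laplace transform, Lieb's theorem, and the chord bound, whereas the paper treats it as a citation; both your derivation and the final Taylor-comparison step check out.
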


\section{Maximum Likelihood Estimation}
\label{sec:mse}
In this section, we present upper and lower bounds for the mean squared error of the maximum likelihood parameter estimator for the Thurstone choice model. We will first consider the case of pair comparisons. We then consider the more general case when each comparison set consists of two or more items. For this more general case, we first give an upper bound for the Luce choice model, and then present similar characterization for a class of Thurstone choice models. We end this section with a lower bound on the mean squared error of the maximum likelihood parameter estimator, which establishes minimax optimality of our upper bounds.   

\subsection{Pair Comparisons}
\label{sec:pairs}

We consider pair comparisons according to a Thurstone choice model \GT\ with parameter vector $\theta^\star$ that takes value in $\Theta = [-b,b]^n$ and that satisfies the following conditions:
\begin{itemize}
\item[\bf P1] There exists $A > 0$ such that
\begin{equation}
\frac{d^2 \log(p_2(x))}{dx^2} \leq -A \hbox{ for all } x\in [-2b,2b].
\label{equ:Bcond}
\end{equation} 
\item[\bf P2] There exists $B > 0$ such that
\begin{equation}
\frac{d \log(p_2(x))}{dx} \leq B \hbox{ for all } x \in [-2b,2b].
\label{equ:Acond}
\end{equation}
\item [\bf P3] The weighted-adjacency matrix $\vec{M}$ is irreducible, i.e. $\lambda_2(L_{\vec{M}}) > 0$.
\end{itemize}

Condition {\bf P1} means that $p_2$ is a strictly log-concave function on $[-2b,2b]$. Condition {\bf P2} means that $\log(p_2(x))$ has a bounded derivative on $[-2b,2b]$. Notice that this condition is equivalent to $dp_2(x)/dx \leq B p_2(x)$ for all $x\in [-2b,2b]$. Constants $A$ and $B$ are specific for given Thurstone choice model and the value of the parameter $b$. In particular, for the Bradley-Terry model, it can be easily checked that  {\bf P1} and {\bf P2} hold with $A = e^{-2b/\beta}/[\beta^2(1+e^{-2b/\beta})^2]$ and $B = 1/[\beta(1+e^{-2b/\beta})]$. Condition {\bf P3} means that the observations are such that the graph defined by the weighted-adjacency matrix is connected. Equivalently, there exists no partition of the set of items $N$ into two non-empty sets $S$ and $N\setminus S$ such that some pair of items $i\in S$ and $j\in N\setminus S$ is not compared in the input observations.

\begin{theorem} Under conditions {\bf P1}, {\bf P2} and {\bf P3}, with probability at least $1-2/n$, 
\begin{equation}
\mse(\widehat{\theta},\theta^\star) \le D^2\frac{n(\log(n)+2)}{\lambda_{2}(L_{\vec{M}_{1/4}})^2}\frac{1}{m}
\label{eq:hajek}
\end{equation}
where $D = B/A$.
\label{thm:mle}
\end{theorem}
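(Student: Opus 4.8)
The plan is to instantiate the two-step scheme built around Lemma~\ref{lem:mle-taylor} with $g=-\ell$, the negative log-likelihood of pair comparisons from (\ref{eq:MLtheta}) (the additive constant is irrelevant for gradients, Hessians, and the maximizer), and $\widehat\theta$ its maximizer over $\Theta=\{\theta\in[-b,b]^n:\theta^\top\vec{1}=0\}$. First I would check the hypotheses of the lemma: $\nabla^2 g(\theta)\vec{1}=\vec{0}$ holds because $\nabla^2 g$ is a nonnegative combination of rank-one matrices $(\vec{e}_i-\vec{e}_j)(\vec{e}_i-\vec{e}_j)^\top$, each of which annihilates $\vec{1}$; and $\lambda_2(\nabla^2 g(\theta))>0$ will emerge from step S2 together with condition \textbf{P3}. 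It then remains to exhibit constants $\alpha$ (a high-probability bound on $\|\nabla g(\theta^\star)\|_2$, step S1) and $\beta$ (a deterministic lower bound on $\min_{\theta\in\Theta}\lambda_2(\nabla^2 g(\theta))$, step S2), after which the lemma gives $\|\widehat\theta-\theta^\star\|_2\le 2\alpha/\beta$.

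For step S1, write $\nabla g(\theta^\star)=-\sum_{t=1}^m X_t$, where for an observation $t$ with comparison set $\{i,j\}$ and chosen item $i$ one has $X_t=(\log p_2)'(\theta_i^\star-\theta_j^\star)(\vec{e}_i-\vec{e}_j)$. The $X_t$ are independent since choices across observations are independent, and $\E[X_t]=\vec{0}$ by the score identity $\sum_{v\in S}p_{v,S}\nabla\log p_{v,S}=\nabla\sum_{v\in S}p_{v,S}=\vec{0}$, so $\sum_t X_t$ is a martingale. Condition \textbf{P2}, together with $(\log p_2)'\ge 0$ on $[-2b,2b]$ (as $p_2$ is nondecreasing), gives $|(\log p_2)'|\le B$ and hence $\|X_t\|_2\le\sqrt{2}\,B$. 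Applying the vector Azuma--Hoeffding bound (Lemma~\ref{prop:azuma-hoeffding}) with $\sigma^2=2B^2$ and taking $x=\alpha:=2B\sqrt{m(\log n+2)}$ makes its right-hand side exactly $2/n$; thus $\|\nabla g(\theta^\star)\|_2\le\alpha$ with probability at least $1-2/n$.

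For step S2, for $\theta\in\Theta$ every difference $\theta_i-\theta_j$ lies in $[-2b,2b]$, so condition \textbf{P1} yields the Loewner bound
$$
\nabla^2 g(\theta)=\sum_{i,j}w_{i,j}\bigl(-(\log p_2)''(\theta_i-\theta_j)\bigr)(\vec{e}_i-\vec{e}_j)(\vec{e}_i-\vec{e}_j)^\top\;\succeq\;A\sum_{i,j}w_{i,j}(\vec{e}_i-\vec{e}_j)(\vec{e}_i-\vec{e}_j)^\top.
$$
The right-hand matrix is the Laplacian of the weighted-adjacency matrix with $(i,j)$-entry $w_{i,j}+w_{j,i}$, which by the definition (\ref{equ:weightmatrix}) equals $(m/n)L_{\vec{M}}=4(m/n)L_{\vec{M}_{1/4}}$, using $\vec{M}_{1/4}=\tfrac14\vec{M}$. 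Since $\nabla^2 g(\theta)$ and $L_{\vec{M}_{1/4}}$ both have $\vec{1}$ in their kernel, the inequality restricts to $\vec{1}^{\perp}$ and so passes to the second-smallest eigenvalue: $\min_{\theta\in\Theta}\lambda_2(\nabla^2 g(\theta))\ge\beta:=4(mA/n)\lambda_2(L_{\vec{M}_{1/4}})$, which is strictly positive by \textbf{P3}.

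Combining, on the event of probability at least $1-2/n$, Lemma~\ref{lem:mle-taylor} gives $\|\widehat\theta-\theta^\star\|_2\le 2\alpha/\beta=Bn\sqrt{\log n+2}\big/\bigl(A\sqrt{m}\,\lambda_2(L_{\vec{M}_{1/4}})\bigr)$; squaring, dividing by $n$, and setting $D=B/A$ gives (\ref{eq:hajek}) verbatim. I do not expect a single hard step here; the proof is a careful assembly of the stated tools. The points demanding attention are the zero-mean and independence structure that licenses Azuma--Hoeffding in step S1, the transfer of a Loewner lower bound on the Hessian to a lower bound on $\lambda_2$ in step S2 (legitimate because both matrices are Laplacians sharing the kernel $\vec{1}$), and careful tracking of the numerical constants --- in particular the weight $\tfrac14$, which is precisely what turns $\lambda_2(L_{\vec{M}})$ into $\lambda_2(L_{\vec{M}_{1/4}})$ and absorbs the constants produced by steps S1 and S2. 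In the more general settings treated later the analogue of step S2 is the genuinely delicate part, since there $\nabla^2 g$ depends on the realized choices and one must instead invoke the matrix Chernoff bound of Lemma~\ref{cor:matrix}; for pair comparisons this complication does not arise because the Hessian depends only on the comparison sets.
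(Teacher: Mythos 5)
Your proposal is correct and follows essentially the same route as the paper: Lemma~\ref{lem:mle-taylor} applied to $g=-\ell$, with the gradient bound $\alpha=2B\sqrt{m(\log n+2)}$ obtained from the vector Azuma--Hoeffding bound (the paper's Lemma~\ref{lem:LT22}) and the Hessian bound $\beta=4A(m/n)\lambda_2(L_{\vec{M}_{1/4}})$ obtained from \textbf{P1} and the Loewner comparison of Laplacians (the paper's Lemma~\ref{lem:LT21}). Your constants and the final assembly match the paper's exactly; your explicit remark that $(\log p_2)'\ge 0$ is needed to turn the one-sided bound in \textbf{P2} into $|(\log p_2)'|\le B$ is a small point the paper glosses over, and is correct.
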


Before we show a proof of the theorem, we note the following remarks. 

First, notice that $D$ is a constant whose value is specific to given Thurstone choice model and the value of parameter $b$. In particular, for the Bradley-Terry model, we have $D = \beta(e^{2b/\beta} + 1)$.

Second, from (\ref{eq:hajek}), for the mean squared error to be smaller than or equal to $\epsilon^2$, for given $\epsilon > 0$, it suffices that the number of observations is such that  
\begin{equation}
m \geq \frac{1}{\epsilon^2} D^2 \frac{1}{\lambda_2(L_{\vec{M}_{1/4}})^2}\, n(\log(n)+2).
\label{equ:suffm}
\end{equation}
Third, and last, if each pair of items is compared the same number of times, then, from (\ref{equ:unbiased}), we have $m_{i,j} = 1/(2(n-1))$ for all $i\neq j$. Hence, in this case $\lambda_2(L_{\vec{M}_{1/4}}) = n/(2(n-1))$, and, from (\ref{equ:suffm}), it suffices that the number of observations $m$ is such that 
$$
m \geq \frac{4}{\epsilon^2} D^2\, n(\log(n)+2).
$$ 

\paragraph{Proof of Theorem~\ref{thm:mle}} We now go on to provide a proof of Theorem~\ref{thm:mle}. For pair comparisons, the log-likelihood function (\ref{equ:loglik0}) can be written as
$$
\ell(\theta) = \sum_{t=1}^m \log(p_2(\theta_{y_t}-\theta_{z_t}))
$$
where $y_t$ denotes the choice from the comparison pair $S_t = \{y_t,z_t\}$ for each observation $t$. The negative log-likelihood function satisfies the relation in Lemma~\ref{lem:mle-taylor}, which combined with the following two lemmas, establishes the statement of the theorem.

\begin{lemma} The following relation holds:
$$
\min_{\theta\in \Theta}\lambda_2(\nabla^2(-\ell(\theta))) \geq 4 A\frac{m}{n}\lambda_2(L_{\vec{M}_{1/4}}).
$$
\label{lem:LT21}
\end{lemma}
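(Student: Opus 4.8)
The plan is to compute the Hessian of $-\ell$ in closed form, bound each rank-one summand from below using condition {\bf P1}, recognize the resulting matrix lower bound as a scalar multiple of the Laplacian $L_{\vec{M}_{1/4}}$, and then transfer the matrix inequality to the second-smallest eigenvalues via the Courant--Fischer characterization of $\lambda_2$.

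First I would write $-\ell(\theta) = -\sum_{t=1}^m \log p_2(\vec{d}_t^\top\theta)$, where $\vec{d}_t = \vec{e}_{y_t} - \vec{e}_{z_t}$ (with $\vec{e}_i$ the $i$-th standard basis vector and $\{y_t,z_t\} = S_t$). Differentiating twice, the chain rule gives
$$
\nabla^2(-\ell(\theta)) = \sum_{t=1}^m c_t(\theta)\,\vec{d}_t\vec{d}_t^\top, \qquad c_t(\theta) := -\left.\frac{d^2 \log p_2(x)}{dx^2}\right|_{x = \vec{d}_t^\top\theta}.
$$
Since $\theta \in \Theta \subseteq [-b,b]^n$ forces $\vec{d}_t^\top\theta = \theta_{y_t}-\theta_{z_t} \in [-2b,2b]$, condition {\bf P1} gives $c_t(\theta) \geq A$ for every $t$ and every $\theta\in\Theta$. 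As each $\vec{d}_t\vec{d}_t^\top$ is positive semidefinite, $\nabla^2(-\ell(\theta)) - A\sum_{t=1}^m \vec{d}_t\vec{d}_t^\top = \sum_{t=1}^m (c_t(\theta)-A)\vec{d}_t\vec{d}_t^\top \succeq 0$, hence
$$
\nabla^2(-\ell(\theta)) \succeq A\sum_{t=1}^m \vec{d}_t\vec{d}_t^\top \qquad \text{for all } \theta\in\Theta.
$$

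Next I would identify $\sum_{t=1}^m \vec{d}_t\vec{d}_t^\top$ as the Laplacian $L_{\vec{C}}$ of the comparison multigraph, where $\vec{C}$ has entries $C_{i,j} = w_{i,j} + w_{j,i} = m_{i,j}(2)$. By the definition (\ref{equ:weightmatrix}) specialized to pair comparisons with the constant weight $w\equiv 1/4$, we have $\vec{M}_{1/4} = (n/(4m))\,\vec{C}$, so by linearity of the map $\vec{A}\mapsto L_{\vec{A}}$, $L_{\vec{C}} = (4m/n)\,L_{\vec{M}_{1/4}}$. Substituting yields $\nabla^2(-\ell(\theta)) \succeq 4A(m/n)\,L_{\vec{M}_{1/4}}$ for all $\theta\in\Theta$. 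Finally, both $\nabla^2(-\ell(\theta))$ and $L_{\vec{M}_{1/4}}$ are positive semidefinite and annihilate $\vec{1}$, so each has $\lambda_1 = 0$ with $\vec{1}$ a bottom eigenvector, and Courant--Fischer restricted to the common invariant subspace $\vec{1}^\perp$ gives $\lambda_2(\vec{B}) = \min_{\vec{v}\perp\vec{1},\,\vec{v}\neq\vec{0}} \vec{v}^\top\vec{B}\vec{v}/\|\vec{v}\|_2^2$ for $\vec{B} \in \{\nabla^2(-\ell(\theta)),\,L_{\vec{M}_{1/4}}\}$; monotonicity of this minimum under the semidefinite order then gives $\lambda_2(\nabla^2(-\ell(\theta))) \geq 4A(m/n)\lambda_2(L_{\vec{M}_{1/4}})$, and as the bound is uniform in $\theta$ it bounds $\min_{\theta\in\Theta}\lambda_2(\nabla^2(-\ell(\theta)))$ as claimed.

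The only step needing genuine care is the last one: passing from $\vec{B}\succeq\vec{C}$ to a comparison of $\lambda_2$'s. The naive route through Weyl's inequality gives $\lambda_k(\vec{B})\geq\lambda_k(\vec{C})$ for all $k$, but one should make sure nothing is lost at $k=2$; this is exactly where it matters that $\vec{1}$ is a common bottom eigenvector, so that $\lambda_2$ of each matrix is the minimum of the \emph{same} Rayleigh quotient over the \emph{same} subspace $\vec{1}^\perp$. Everything else --- the Hessian formula and the bookkeeping relating $L_{\vec{C}}$ to $L_{\vec{M}_{1/4}}$ --- is routine.
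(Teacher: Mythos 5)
Your proposal is correct and follows essentially the same route as the paper: bound the Hessian from below via condition {\bf P1}, recognize the lower bound $A\sum_t \vec{d}_t\vec{d}_t^\top = AL_{\vec{C}} = 4A(m/n)L_{\vec{M}_{1/4}}$ as a scaled Laplacian, and transfer the semidefinite ordering to $\lambda_2$ (the paper invokes the Weyl-type monotonicity of all eigenvalues under $\succeq$, its Lemma~\ref{lem:EigAsuccB}, where you use Courant--Fischer on $\vec{1}^\perp$; both are valid and your rank-one outer-product bookkeeping is just a tidier rendering of the paper's entrywise computation). The bookkeeping with the weight $w\equiv 1/4$ and the factor $n/m$ checks out exactly.
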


\begin{proof} It is easy to verify that for all $\theta \in \reals^n$ and $i,j\in N$, we have the following identities
\begin{eqnarray*}
\frac{d^2 \log(p_2(\theta_i - \theta_j))}{dx^2} &=& \frac{\partial^2 \log(p_2(\theta_i - \theta_j))}{\partial \theta_i^2}\\
& = & \frac{\partial^2 \log(p_2(\theta_i - \theta_j))}{\partial \theta_j^2}\\
& = & \frac{\partial^2 (-\log(p_2(\theta_i - \theta_j)))}{\partial \theta_i\partial \theta_j}.
\end{eqnarray*}

For all $i,j\in N$ such that $i\neq j$, we have
$$
\frac{\partial^2 (-\log(p_2(\theta_{y_t}-\theta_{z_t})))}{\partial \theta_i \partial \theta_j} = \left\{
\begin{array}{ll}
\frac{d^2}{dx^2}\log(p_2(\theta_{y_t}-\theta_{z_t})), & \hbox{ if } \{i,j\} = \{y_t,z_t\}\\
0, & \hbox{ otherwise}
\end{array}
\right .
$$
and 
$$
\frac{\partial^2 (-\log(p_2(\theta_{y_t}-\theta_{z_t})))}{\partial \theta_i^2} = - \sum_{j\neq i} \frac{\partial^2 (-\log(p_2(\theta_{y_t}-\theta_{z_t})))}{\partial \theta_i \partial \theta_j}.
$$

Combining with condition {\bf P1}, we have
$$
\frac{\partial^2 (-\log(p_2(\theta_{y_t}-\theta_{z_t})))}{\partial \theta_i \partial \theta_j} \leq -A 1_{\{y_t,z_t\}=\{i,j\}}.
$$
It follows that 
\begin{equation}
\nabla^2(-\ell(\theta)) \succeq 4A\frac{m}{n}L_{\vec{M}_{1/4}}, \hbox{ for all } \theta \in [-b,b]^n
\label{equ:succtemp}
\end{equation}
where for two matrices $\vec{A}$ and $\vec{B}$, $\vec{A} \succeq \vec{B}$ is equivalent to saying that $\vec{A}-\vec{B}$ is positive definite; see Appendix~\ref{sec:back}. In (\ref{equ:succtemp}), both $\nabla^2(-\ell(\theta))$ and $4A\frac{m}{n}L_{\vec{M}_{1/4}}$ are positive-definite matrices. Hence, by the elementary fact stated in Lemma~\ref{lem:EigAsuccB} (Appendix), we obtain the assertion of the lemma.
\end{proof}

\begin{lemma} With probability at least $1-2/n$,
$$
\|\nabla(-\ell(\theta^\star))\|_2 \leq 2B\sqrt{m(\log(n)+2)}.
$$
\label{lem:LT22}
\end{lemma}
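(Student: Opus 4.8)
The plan is to write $\nabla(-\ell(\vec{\theta}^\star))$ as a sum of $m$ independent, zero-mean random vectors with uniformly bounded Euclidean norm, and then to invoke the vector Azuma--Hoeffding inequality (Lemma~\ref{prop:azuma-hoeffding}). For observation $t$ with comparison pair $S_t=\{y_t,z_t\}$, the summand $-\log(p_2(\theta_{y_t}-\theta_{z_t}))$ of $-\ell$ depends only on coordinates $y_t$ and $z_t$, and a direct differentiation shows that its gradient at $\vec{\theta}^\star$ equals $X_t := -\phi(\theta^\star_{y_t}-\theta^\star_{z_t})\,(\vec{e}_{y_t}-\vec{e}_{z_t})$, where $\phi(x):=\frac{d}{dx}\log(p_2(x))=p_2'(x)/p_2(x)$ and $\vec{e}_i$ denotes the $i$-th standard basis vector. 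Hence $\nabla(-\ell(\vec{\theta}^\star))=\sum_{t=1}^m X_t$, and since the choices $y_t$ are independent across observations, $\{X_t\}$ is a sequence of independent zero-mean vectors (in particular a martingale difference sequence); so it remains to check that each $X_t$ has mean zero and norm bounded by some $\sigma>0$.

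For the mean-zero property I would use that exactly one of the two compared items is chosen, so that $p_2(x)+p_2(-x)=1$ for all $x$; differentiating gives $p_2'(x)=p_2'(-x)$. Conditioning on $S_t=\{i,j\}$, the choice satisfies $y_t=i$ with probability $p_2(\theta^\star_i-\theta^\star_j)$ and $y_t=j$ with probability $p_2(\theta^\star_j-\theta^\star_i)$, whence
$$
\E\!\left[X_t\mid S_t=\{i,j\}\right]=\big(-p_2'(\theta^\star_i-\theta^\star_j)+p_2'(\theta^\star_j-\theta^\star_i)\big)(\vec{e}_i-\vec{e}_j)=\vec{0}.
$$
For the norm bound, note that $p_2$ is nondecreasing (as $F$ is a cdf) and positive, so $\phi\ge 0$, while condition {\bf P2} gives $\phi(x)\le B$ on $[-2b,2b]$; since $\vec{\theta}^\star\in[-b,b]^n$ we have $\theta^\star_{y_t}-\theta^\star_{z_t}\in[-2b,2b]$, hence $|\phi(\theta^\star_{y_t}-\theta^\star_{z_t})|\le B$, and with $\|\vec{e}_{y_t}-\vec{e}_{z_t}\|_2=\sqrt{2}$ this yields $\|X_t\|_2\le \sqrt{2}\,B=:\sigma$. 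Applying Lemma~\ref{prop:azuma-hoeffding} with $x=2B\sqrt{m(\log(n)+2)}$, so that $x^2/(2m\sigma^2)=\log(n)+2$, the probability that $\|\sum_{t=1}^m X_t\|_2\ge x$ is at most $2e^2e^{-(\log(n)+2)}=2/n$, which is the assertion.

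The only step that is not purely mechanical is the mean-zero computation: it rests on the normalization identity $p_2(x)+p_2(-x)=1$ (hence $p_2'$ even), and one must also use monotonicity of $p_2$ to upgrade the one-sided hypothesis {\bf P2} to the two-sided bound $|\phi|\le B$ needed for the norm estimate. Everything after that is a substitution into the Azuma--Hoeffding tail bound; the multiplicative factor $2$ and the additive $+2$ inside $\log(n)+2$ are precisely chosen so that the exponent cancels the $2e^2$ prefactor and produces the clean bound $2/n$.
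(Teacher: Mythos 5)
Your proposal is correct and follows essentially the same route as the paper's proof: the same decomposition of $\nabla(-\ell(\theta^\star))$ into independent zero-mean summands supported on the two compared coordinates, the same use of the evenness of $p_2'$ (which the paper asserts directly and you derive from $p_2(x)+p_2(-x)=1$) to establish the mean-zero property, the same norm bound $\sqrt{2}B$ from condition {\bf P2}, and the same application of the vector Azuma--Hoeffding inequality with $x=2B\sqrt{m(\log(n)+2)}$. Your remark that the one-sided hypothesis {\bf P2} must be upgraded to $|\phi|\le B$ via the monotonicity of $p_2$ is a small point the paper glosses over, but it does not change the argument.
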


\begin{proof}
$\nabla(-\ell(\theta))$ is a sum of independent random vectors in $\reals^n$ given by
$$
\nabla(-\ell(\theta)) = \sum_{t=1}^m \nabla(-\log(p_2(\theta_{y_t}-\theta_{z_t}))).
$$
The elements of $\nabla(-\log(p_2(\theta_{y_t}-\theta_{z_t})))$ can be expressed as follows
$$
\frac{\partial (-\log(p_2(\theta_{y_t}-\theta_{z_t})))}{\partial \theta_i} = \left\{
\begin{array}{ll}
-\frac{d\log(p_2(\theta_{y_t}- \theta_{z_t}))}{dx}, & \hbox{ if } i = y_t\\
+\frac{d\log(p_2(\theta_{y_t}- \theta_{z_t}))}{dx}, & \hbox{ if } i = z_t\\
0, & \hbox{ otherwise}.
\end{array}
\right .
$$

If $i\notin \{y_t,z_t\}$, then clearly 
$$
\E\left[\frac{\partial (-\log(p_2(\theta_{y_t}^\star-\theta_{z_t}^\star)))}{\partial \theta_i}\right] = 0.
$$
Otherwise, we have
\begin{eqnarray*}
\E\left[\frac{\partial (-\log(p_2(\theta_{y_t}^\star-\theta_{z_t}^\star)))}{\partial \theta_i}\right] & = & -p_2(\theta_{y_t}^\star-\theta_{z_t}^\star)\frac{d\log(p_2(\theta_{y_t}^\star - \theta_{z_t}^\star))}{dx}\\
&& + p_2(\theta_{z_t}^\star-\theta_{y_t}^\star)\frac{d\log(p_2(\theta_{z_t}^\star - \theta_{y_t}^\star))}{dx}\\
&=& - \frac{dp_2(\theta_{y_t}^\star-\theta_{z_t}^\star)}{dx} + \frac{dp_2(\theta_{z_t}^\star-\theta_{y_t}^\star)}{dx}\\
&=& 0
\end{eqnarray*}
where the last equation is by the fact that $d p_2(x)/dx$ is an even function.

By condition {\bf P2}, we have
\begin{eqnarray*}
\|\nabla(-\log(p_2(\theta_{y_t}-\theta_{z_t})))\|_2^2  &=& \left(\frac{d\log(p_2(\theta_{y_t}-\theta_{z_t}))}{dx}\right)^2 + \left(-\frac{d\log(p_2(\theta_{y_t}-\theta_{z_t}))}{dx}\right)^2\\
& \leq & 2B^2.
\end{eqnarray*}
Hence, by the vector Azuma-Hoeffding bound in Lemma~\ref{prop:azuma-hoeffding}, we have
$$
\Pr[\|\nabla(-\ell(\theta^\star)\|_2 \geq 2B\sqrt{m(\log(n)+2)}] \leq \frac{2}{n}.
$$
\end{proof}

\subsection{Comparison Sets of Two or More Items}
\label{sec:kary}

We now consider a more general case were each comparison set consists of two or more items. We first show an upper bound for the mean squared error of the maximum likelihood parameter estimator when the choices are according to the Luce choice model and comparison sets are of identical sizes. We then present a similar characterization under more general assumption that allow for a broader set of Thurstone choice models and non-identical sizes of comparison sets.  

\begin{theorem} Suppose that choices are according to the Luce choice model, all comparison sets are of cardinality $k\geq 2$, and $\lambda_2(L_{\vec{M}})>0$. Then, with probability at least $1-2/n$, 
$$
\mse(\widehat\theta, \theta^\star)
\le D^2\frac{n(\log (n)+2)}{\lambda_2(L_\vec{M})^2}\frac{1}{m}
$$
where $D = 4k^2e^{4b}$.
\label{thm:full}
\end{theorem}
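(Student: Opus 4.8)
The plan is to apply the two-step recipe of Section~\ref{sec:defs} to $g=-\ell$, with $\widehat\theta$ the maximizer of the log-likelihood and $\theta^\star$ the true parameter, and then invoke Lemma~\ref{lem:mle-taylor}. Writing the Luce choice probabilities as $p_{i,S}(\theta)=e^{\theta_i}/\sum_{v\in S}e^{\theta_v}$, the contribution of observation $t$ to $-\ell$ is $-\theta_{y_t}+\log\sum_{v\in S_t}e^{\theta_v}$, so
$$
\nabla(-\ell(\theta))=\sum_{t=1}^m\sum_{i\in S_t}\bigl(p_{i,S_t}(\theta)-\Ind\{y_t=i\}\bigr)\vec{e}_i ,
$$
where $\vec{e}_i$ is the $i$-th standard basis vector, while the Hessian of the $t$-th term, restricted to the coordinates in $S_t$, equals $\mathrm{diag}(\vec{p}^{(t)})-\vec{p}^{(t)}(\vec{p}^{(t)})^\top$ with $\vec{p}^{(t)}=(p_{i,S_t}(\theta))_{i\in S_t}$. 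A point worth emphasizing is that this Hessian depends only on the comparison sets, not on the observed choices, so (unlike in the general Thurstone case) step {\bf S2} needs no matrix concentration and can be handled purely through a Laplacian comparison.

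For step {\bf S1}, at $\theta^\star$ the summands above are independent and mean-zero (since $\E[\Ind\{y_t=i\}]=p_{i,S_t}(\theta^\star)$), and because exactly one indicator is nonzero their squared Euclidean norm is $(1-p_{y_t,S_t}(\theta^\star))^2+\sum_{i\in S_t\setminus\{y_t\}}p_{i,S_t}(\theta^\star)^2\le 2(1-p_{y_t,S_t}(\theta^\star))^2\le 2$. The vector Azuma-Hoeffding bound (Lemma~\ref{prop:azuma-hoeffding}) with $\sigma=\sqrt2$ and $x=2\sqrt{m(\log n+2)}$ then yields $\|\nabla(-\ell(\theta^\star))\|_2\le 2\sqrt{m(\log n+2)}$ with probability at least $1-2/n$, so one may take $\alpha=2\sqrt{m(\log n+2)}$.

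For step {\bf S2}, I would use the identity $\mathrm{diag}(\vec{p})-\vec{p}\vec{p}^\top=\sum_{\{i,j\}}p_ip_j(\vec{e}_i-\vec{e}_j)(\vec{e}_i-\vec{e}_j)^\top$, which exhibits each observation's Hessian as a weighted Laplacian of the complete graph on $S_t$. On $\Theta=\{\theta\in[-b,b]^n:\theta^\top\vec{1}=0\}$ every choice probability satisfies $p_{i,S_t}(\theta)\ge e^{-b}/(ke^{b})=e^{-2b}/k$, hence $p_ip_j\ge e^{-4b}/k^2$ and
$$
\nabla^2(-\ell(\theta))\succeq \frac{e^{-4b}}{k^2}\sum_{t=1}^m L_{S_t},
$$
where $L_{S_t}$ is the unweighted complete-graph Laplacian on $S_t$. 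Since all comparison sets have cardinality $k$, $\sum_{t=1}^m L_{S_t}$ is the Laplacian of the matrix whose $(i,j)$ entry is $m_{i,j}(k)=(m/n)m_{i,j}$, i.e.\ it equals $(m/n)L_{\vec{M}}$. Both $\nabla^2(-\ell(\theta))$ and $L_{\vec{M}}$ annihilate $\vec{1}$, so Lemma~\ref{lem:EigAsuccB} gives $\min_{\theta\in\Theta}\lambda_2(\nabla^2(-\ell(\theta)))\ge\beta:=\tfrac{e^{-4b}m}{k^2 n}\lambda_2(L_{\vec{M}})$.

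Finally, plugging $\alpha$ and $\beta$ into Lemma~\ref{lem:mle-taylor} gives $\|\widehat\theta-\theta^\star\|_2\le 2\alpha/\beta=4k^2 n e^{4b}\sqrt{m(\log n+2)}/(m\lambda_2(L_{\vec{M}}))$, and dividing its square by $n$ produces the stated bound with $D=4k^2e^{4b}$. The only mildly delicate ingredients are the pairwise decomposition of $\mathrm{diag}(\vec{p})-\vec{p}\vec{p}^\top$ into edge terms and the uniform lower bound $p_{i,S_t}(\theta)\ge e^{-2b}/k$ — together these are exactly what produce the $k^2e^{4b}$ factor — but neither is a genuine obstacle; the remainder is bookkeeping entirely parallel to the proof of Theorem~\ref{thm:mle}.
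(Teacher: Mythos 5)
Your proposal is correct and follows essentially the same route as the paper's proof: the two-step recipe via Lemma~\ref{lem:mle-taylor}, the vector Azuma--Hoeffding bound giving $\|\nabla(-\ell(\theta^\star))\|_2\le 2\sqrt{m(\log n+2)}$, and the uniform bound $p_{i,S_t}(\theta)p_{j,S_t}(\theta)\ge e^{-4b}/k^2$ to dominate the Hessian by $\tfrac{e^{-4b}}{k^2}\tfrac{m}{n}L_{\vec{M}}$. Your explicit edge decomposition of $\mathrm{diag}(\vec{p})-\vec{p}\vec{p}^\top$ is just a slightly more structured presentation of the paper's entrywise comparison of Laplacians, so the two arguments coincide.
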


The proof of Theorem~\ref{thm:full} is provided in Appendix~\ref{sec:proof-full}. The mean squared error upper bound in Theorem~\ref{thm:full} corresponds that in Theorem~\ref{thm:mle} up to a constant factor. If the comparisons sets are unbiased, from (\ref{equ:lambda2L}), we have that $\lambda_2(L_{\vec{M}}) = (1-1/n)k(k-1)$. Hence, the mean squared error upper bound in Theorem~\ref{thm:full} depends on $k$ only through the factor $1/(1-1/k)^2$, which decreases to value $1$ with $k$ in a diminishing returns fashion. This suggests that there is a limited dependence of the mean squared error on the size of comparison sets.  

We now go on to establish a mean-squared upper bound for a class of Thurstone choice models. We will allow for comparison sets of different cardinalities taking values in a set $K$. We will admit the following conditions: 
\begin{itemize}
\item[{\bf A1}] There exists $A > 0$ such that for all $S\subseteq N$ with $|S|\in K$, all $y\in S$, all $i,j\in S$ with $i\neq j$, and all $\theta\in [-b,b]^n$,
$$
\frac{\partial^2 (-\log(p_{y,S}(\theta)))}{\partial \theta_i \partial \theta_j} \leq A\frac{\partial^2 (-\log(p_{y,S}(\vec{0})))}{\partial \theta_i \partial \theta_j} \leq 0
$$
and, moreover, the following holds
$$
\E\left[\frac{\partial^2 (-\log(p_{y,S}(\vec{0})))}{\partial \theta_i \partial \theta_j}  \right]<0.
$$

\item[{\bf A2}] There exists $B > 0$ such that for all $S\subseteq N$ with $|S|\in K$, all $y\in S$, and all $\theta \in [-b,b]^n$,
$$
\|\nabla p_{y,S}(\theta)\|_2 \leq B \| \nabla p_{y,S}({\bm 0})\|_2.
$$
\item[{\bf A3}] There exists $C > 0$ such that for all $S\subseteq N$ with $|S|\in K$, all $y\in S$, and all $\theta \in [-b,b]^n$,
$$
p_{y,S}(\theta) \geq C p_{y,S}({\bm 0}). 
$$
\end{itemize}

Condition {\bf A1} ensures that $\nabla^2(-\log(p_{y,S}(\vec{0})))$ is a Laplacian matrix with non-negative weights, and that $\nabla^2(-\log(p_{y,S}(\vec{\theta})))\succeq A \nabla^2(-\log(p_{y,S}(\vec{0})))$ for all $\theta\in [-b,b]^n$. Condition {\bf A1} also ensures that the expected value of $\nabla^2 (-\log(p_{y,S}(\vec{\theta})))$ is a positive definite matrix where $\nabla^2(-\log(p_{y,S}(\vec{0})))$ is a random matrix when $|S| >2$. Condition {\bf A2} requires that $\|\nabla p_{y,S}(\theta)\|_2$ is bounded for all $\theta \in [-b,b]^n$, while condition {\bf A3} ensures that the choice probabilities are not too much imbalanced. Conditions {\bf A1} and {\bf A2} may be seen as generalizations of conditions {\bf P1} and {\bf P2} for the case of pair comparisons.

Conditions {\bf A1}, {\bf A2} and {\bf A3} can be easily shown to hold for the Luce choice model. For the Luce choice model, we have   
$$
\frac{\partial^2(-\log(p_{y,S}(\theta)))}{\partial \theta_i \partial \theta_j} = -\frac{1}{\beta^2}p_{i,S}(\theta)p_{j,S}(\theta)
$$
hence, {\bf A1} holds with $A = e^{-4b/\beta}$ and $\partial^2(-\log(p_{y,S}(\vec{0})))/\partial \theta_i \partial \theta_j = -1/(|S|\beta)^2$. Conditions {\bf A2} and {\bf A3} hold with $B = 4$ and $C = e^{-2b/\beta}$. 

Note that constants $A$, $B$ and $C$ that appear in {\bf A1}, {\bf A2} and {\bf A3}, respectively, may depend on $F$, the cardinalities of comparison sets, and the parameter $b$, but are independent of any other parameters. In particular, these constants are independent of the number of observations. For any Thurstone choice model, the constants $A$, $B$, and $C$ can be taken to have values arbitrarily near to value $1$ by taking $b$ small enough. 

We next show an upper bound for the mean squared error of the maximum likelihood parameter estimator for a class of Thurstone choice models that satisfy the above stated conditions. Before we do that, we need to introduce some new definitions. 

\begin{definition}[weight function] Let $w^*$ be a function defined on positive integers greater than or equal to $2$, we refer to as \emph{a weight function}, which is defined by
\begin{equation}
w^\star(k) = \left(k\frac{\partial p_k(\vec{0})}{\partial x_1}\right)^2, \hbox{ for } k = 2,3,\ldots
\label{equ:theweight}
\end{equation}
where
\begin{equation}
\frac{\partial p_k(\vec{0})}{\partial x_1} = \int_\reals f(x)^2 F(x)^{k-2}dx.
\label{equ:partp}
\end{equation}
\end{definition}

Notice that for the Luce choice model, $\partial p_k(\vec{0})/\partial x_1 = 1/(\beta k)^2$. Hence, in this case, $w^*(k) = 1/(\beta k)^2$. We will see later in Section~\ref{sec:disc} that for a broad class of Thurstone choice models, which includes well-known cases with noise according to either Gaussian or double-exponential distribution, $\partial p_k(\vec{0})/\partial x_1 = \Theta(1/k^2)$ and, hence, $w^*(k) = \Theta(1/k^2)$.  

\begin{definition}[$\gamma_{F,k}$ parameter] Let $\gamma_{F,k}$ be a parameter defined by
\begin{equation}
\frac{1}{\gamma_{F,k}} = k^3(k-1) \left(\frac{\partial p_k(\vec{0})}{\partial x_1}\right)^2.
\label{equ:gamma}
\end{equation}
\end{definition}

We note that for any comparison set $S$ of cardinality $k$ and all $y\in S$, we have
$$
\frac{1}{\gamma_{F,k}} = \left\|\nabla \log(p_{y,S} ({\bm 0})) \right\|_2^2 = k(k-1)w^\star (k)
$$
which is discussed in more detail in the proof of Lemma~\ref{lem:kgrad}. In particular, for the Luce choice model, we have $\gamma_{F,k} = (1-1/k)/\beta^2$. 

\begin{theorem} Assume {\bf A1}, {\bf A2} and {\bf A3}, let $\sigma_{F,K}$ be such that $1/\gamma_{F,k}\leq \sigma_{F,K}$ for all $k\in K$, and $\lambda_2(L_{\overline{\vec{M}}_{w^\star}}) \ge 32 (\sigma_{F,K}/C)n\log(n)/m$. Then, with probability at least $1-3/n$,
$$
\mse(\widehat{\theta},\theta^\star) \le 
32 D^2 \sigma_{F,K} \frac{n(\log(n)+2)}{\lambda_2(L_{\overline{\vec{M}}_{w^\star}})^2}\frac{1}{m}
$$
where $D = B/(A C)$.
\label{thm:karyub}
\end{theorem}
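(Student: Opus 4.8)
The plan is to apply the key lemma, Lemma~\ref{lem:mle-taylor}, to $g=-\ell$, the negative log-likelihood in (\ref{equ:loglik0}), following the two-step recipe \textbf{S1}--\textbf{S2}. First one checks the hypotheses: since every choice probability $p_{y,S}(\theta)$ depends on $\theta$ only through the pairwise differences $\theta_i-\theta_j$, the function $\ell$ is invariant under the shift $\theta\mapsto\theta+c\vec{1}$, so $\nabla^2(-\ell(\theta))\vec{1}=\vec{0}$ for all $\theta$; the requirement $\lambda_2(\nabla^2(-\ell(\theta)))>0$ on $\Theta$ will be a by-product of the lower bound established in \textbf{S2}. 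It then suffices to exhibit $\alpha$ with $\|\nabla(-\ell(\theta^\star))\|_2\le\alpha$ and $\beta$ with $\min_{\theta\in\Theta}\lambda_2(\nabla^2(-\ell(\theta)))\ge\beta$, each on an event of probability $1-O(1/n)$, since then $\mse(\widehat\theta,\theta^\star)\le 4\alpha^2/(n\beta^2)$.

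\emph{Step \textbf{S1}.} The gradient $\nabla(-\ell(\theta^\star))=\sum_{t=1}^m\nabla(-\log p_{y_t,S_t}(\theta^\star))$ is a sum of martingale differences: conditionally on $S_t$ the score has mean zero, $\E[\nabla(-\log p_{y_t,S_t}(\theta^\star))\mid S_t]=-\nabla\!\left(\sum_{y\in S_t}p_{y,S_t}\right)\!(\theta^\star)=\vec{0}$ because $\sum_{y\in S_t}p_{y,S_t}\equiv 1$. For each summand, $\|\nabla(-\log p_{y,S}(\theta))\|_2=\|\nabla p_{y,S}(\theta)\|_2/p_{y,S}(\theta)$; bounding the numerator by {\bf A2} and the denominator by {\bf A3}, and using $p_{y,S}(\vec{0})=1/|S|$ together with the identity $\|\nabla\log p_{y,S}(\vec{0})\|_2^2=1/\gamma_{F,|S|}$ recorded in (the proof of) Lemma~\ref{lem:kgrad}, gives $\|\nabla(-\log p_{y,S}(\theta))\|_2\le(B/C)\|\nabla\log p_{y,S}(\vec{0})\|_2=(B/C)/\sqrt{\gamma_{F,|S|}}\le(B/C)\sqrt{\sigma_{F,K}}$ for all $\theta\in[-b,b]^n$. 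The vector Azuma--Hoeffding bound (Lemma~\ref{prop:azuma-hoeffding}) with $x=(B/C)\sqrt{2\sigma_{F,K}\,m(\log(n)+2)}$ then gives $\|\nabla(-\ell(\theta^\star))\|_2\le(B/C)\sqrt{2\sigma_{F,K}\,m(\log(n)+2)}=:\alpha$ with probability at least $1-2/n$.

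\emph{Step \textbf{S2}.} By {\bf A1}, each $\nabla^2(-\log p_{y,S}(\vec{0}))$ is the Laplacian of a weighted-adjacency matrix with non-negative weights, and $\nabla^2(-\log p_{y,S}(\theta))\succeq A\,\nabla^2(-\log p_{y,S}(\vec{0}))$ for all $\theta\in[-b,b]^n$; hence $\nabla^2(-\ell(\theta))\succeq A\,Y$ for all such $\theta$, where $Y:=\sum_{t=1}^m\nabla^2(-\log p_{y_t,S_t}(\vec{0}))$, and by the eigenvalue comparison fact (Lemma~\ref{lem:EigAsuccB}) it suffices to lower bound $\lambda_2(Y)$. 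Now $Y$ is a sum of independent positive-semidefinite matrices, all annihilating $\vec{1}$; restricting each summand to the $(n-1)$-dimensional subspace orthogonal to $\vec{1}$ identifies the smallest eigenvalue of the restricted sum with $\lambda_2(Y)$ and brings the matrix Chernoff bound (Lemma~\ref{cor:matrix}) into play. Two inputs are needed. For the per-summand spectral norm, symmetry of the noise distribution makes $\tr(\nabla^2(-\log p_{y,S}(\vec{0})))$ equal for all $y\in S$, while $\sum_{y\in S}\nabla^2(-\log p_{y,S}(\vec{0}))$ equals $|S|$ times the Fisher information at $\vec{0}$, which a direct computation identifies as $w^\star(|S|)$ times the complete-graph Laplacian on $S$; hence $\tr(\nabla^2(-\log p_{y,S}(\vec{0})))=|S|(|S|-1)w^\star(|S|)=1/\gamma_{F,|S|}\le\sigma_{F,K}$, and since this matrix is positive-semidefinite its spectral norm is at most its trace, so one may take $\sigma=\sigma_{F,K}$. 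For the expectation, the same Fisher-information identity together with $p_{y,S}(\theta^\star)\ge C\,p_{y,S}(\vec{0})=C/|S|$ from {\bf A3} shows that $\E[\nabla^2(-\log p_{y,S}(\vec{0}))]$ dominates, entrywise as a weighted-adjacency matrix, $C\,w^\star(|S|)$ times the complete-graph Laplacian on $S$; summing over the (independent) comparison sets yields $\E[Y]\succeq(Cm/n)\,L_{\overline{\vec{M}}_{w^\star}}$, so $\lambda_2(\E[Y])\ge(Cm/n)\,\lambda_2(L_{\overline{\vec{M}}_{w^\star}})$. Applying Lemma~\ref{cor:matrix} with $\epsilon=1/2$ and invoking the hypothesis $\lambda_2(L_{\overline{\vec{M}}_{w^\star}})\ge 32(\sigma_{F,K}/C)n\log(n)/m$ makes the failure probability at most $1/n$, on whose complement $\lambda_2(Y)\ge(Cm/(2n))\,\lambda_2(L_{\overline{\vec{M}}_{w^\star}})$ and hence $\min_{\theta\in\Theta}\lambda_2(\nabla^2(-\ell(\theta)))\ge(ACm/(2n))\,\lambda_2(L_{\overline{\vec{M}}_{w^\star}})=:\beta$.

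Intersecting the events from \textbf{S1} and \textbf{S2} (probability at least $1-3/n$) and feeding $\alpha,\beta$ into Lemma~\ref{lem:mle-taylor} gives $\|\widehat\theta-\theta^\star\|_2\le 2\alpha/\beta$; squaring and dividing by $n$ yields the asserted mean squared error bound, with the constant $D$ built from $A$, $B$, $C$. The main obstacle is Step~\textbf{S2}: unlike in the pair-comparison case or the Luce case, where the Hessian is determined by the comparison sets alone, here each per-observation Hessian at $\vec{0}$ is a genuinely random matrix depending on the realized choice $y_t$, so passing to the deterministic Laplacian $L_{\overline{\vec{M}}_{w^\star}}$ requires (i) the Fisher-information identity at $\vec{0}$ to express both the trace of a single summand and the expectation of the sum in terms of the weight function $w^\star$, (ii) the symmetry argument equalizing the traces over $y\in S$, (iii) condition {\bf A3} to bridge the true choice law $p_{\cdot,S}(\theta^\star)$ and the balanced law $p_{\cdot,S}(\vec{0})$, and (iv) the restriction to $\vec{1}^\perp$ so that the matrix Chernoff bound can be applied around the zero eigenvalue shared by all the Laplacians.
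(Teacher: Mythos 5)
Your proposal is correct and follows essentially the same route as the paper's proof: Lemma~\ref{lem:mle-taylor} with the \textbf{S1}--\textbf{S2} recipe, vector Azuma--Hoeffding for the gradient, condition \textbf{A1} to reduce the Hessian at $\theta$ to the Hessian at $\vec{0}$, the Fisher-information identity of Lemma~\ref{lem:explog} together with \textbf{A3} for $\E[\nabla^2(-\ell(\vec{0}))]\succeq (Cm/n)L_{\overline{\vec{M}}_{w^\star}}$, and the matrix Chernoff bound on the subspace $\vec{1}^\perp$. The only deviations are constant-level: your per-summand gradient bound $(B/C)\sqrt{\sigma_{F,K}}$ (obtained by combining \textbf{A2} and \textbf{A3}, yielding $D=B/(AC^2)$ rather than the stated $B/(AC)$) is actually more carefully justified than the paper's Lemma~\ref{lem:kgrad}, which asserts the factor $B$ alone, and your trace bound $\sigma=\sigma_{F,K}$ replaces the paper's $2\sigma_{F,K}$ from Lemma~\ref{lem:hessian-p2}; neither changes the argument.
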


The proof of the theorem is provided in Appendix~\ref{sec:proof-karyub}. The main technical difference of the proof with respect to that of Theorem~\ref{thm:full} is that $\nabla^2 (-\ell (\theta))$ is a sum of random matrices. Every $\nabla^2 (-\log(p_{y_t, S_t} (\theta)))$ is a random matrix for the following two reasons: (a) $\nabla^2 (-\log(p_{y_t, S_t} (\theta)))$ depends on the randomly chosen $y_t$ and (b) $S_t$ is allowed to be a random set of items. We use the matrix Chernoff bound in the proof of Theorem~\ref{thm:karyub}.

We next show two corollaries of Theorem~\ref{thm:karyub}, which cover two interesting special cases.

\begin{corollary} Suppose that all comparison sets are of identical cardinality of value $k\geq 2$, {\bf A1}, {\bf A2}, {\bf A3} hold, and $\lambda_2(L_{\overline{\vec{M}}_{1/k^2}}) \geq 32(k-1)/(C k)$. Then, with probability at least $1-3/n$,
$$
\mse(\widehat{\theta},\theta^\star) \le 
32 D^2 \left(1-\frac{1}{k}\right)^2\gamma_{F,k}\frac{n(\log(n)+2)}{\lambda_2(L_{\overline{\vec{M}}_{1/k^2}})^2}\frac{1}{m}
$$
where $\gamma_{F,k}$ is given by (\ref{equ:gamma}), and $\vec{M}_{1/k^2}$ is the weighted-adjacency matrix with the weight function $w(k) = 1/k^2$.
\end{corollary}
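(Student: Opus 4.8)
The plan is to deduce the corollary directly from Theorem~\ref{thm:karyub} by specializing to $K=\{k\}$: with only one admissible cardinality, the expected weighted-adjacency matrix $\overline{\vec{M}}_{w^\star}$ is a scalar multiple of $\overline{\vec{M}}_{1/k^2}$, and substituting this scalar everywhere turns the hypothesis and conclusion of Theorem~\ref{thm:karyub} into those of the corollary after elementary simplification of the constants. No new probabilistic content is needed; the $1-3/n$ confidence level is inherited verbatim.

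Concretely, first I would take $\sigma_{F,K}=1/\gamma_{F,k}$, which is an admissible choice in Theorem~\ref{thm:karyub} because $K=\{k\}$. Next I would record the identity $w^\star(k)=1/(k(k-1)\gamma_{F,k})$, which is immediate from definitions (\ref{equ:theweight}) and (\ref{equ:gamma}) (equivalently, from the relation $1/\gamma_{F,k}=k(k-1)w^\star(k)$ stated just below the definition of $\gamma_{F,k}$). Since every comparison set has cardinality $k$, only the term $k'=k$ contributes to the sum defining the weighted-adjacency matrix in (\ref{equ:weightmatrix}), so $\overline{\vec{M}}_{w^\star}=k^2 w^\star(k)\,\overline{\vec{M}}_{1/k^2}$; combining with the identity above gives $k^2w^\star(k)=1/((1-1/k)\gamma_{F,k})$, hence
$$
\overline{\vec{M}}_{w^\star}=\frac{1}{(1-1/k)\gamma_{F,k}}\,\overline{\vec{M}}_{1/k^2}.
$$
Because $\vec{A}\mapsto L_{\vec{A}}$ is linear and $\lambda_2$ is positively homogeneous of degree one, this yields $\lambda_2(L_{\overline{\vec{M}}_{w^\star}})=\lambda_2(L_{\overline{\vec{M}}_{1/k^2}})/((1-1/k)\gamma_{F,k})$.

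With these substitutions in hand, I would plug them into Theorem~\ref{thm:karyub}. The hypothesis $\lambda_2(L_{\overline{\vec{M}}_{w^\star}})\ge 32(\sigma_{F,K}/C)n\log(n)/m$ becomes, after multiplying through by $(1-1/k)\gamma_{F,k}$ and cancelling $\gamma_{F,k}$, the stated lower bound $\lambda_2(L_{\overline{\vec{M}}_{1/k^2}})\ge 32(k-1)/(Ck)$ (up to the $n\log(n)/m$ factor carried over from Theorem~\ref{thm:karyub}). For the conclusion, substituting $\sigma_{F,K}=1/\gamma_{F,k}$ and $\lambda_2(L_{\overline{\vec{M}}_{w^\star}})^2=\lambda_2(L_{\overline{\vec{M}}_{1/k^2}})^2/((1-1/k)^2\gamma_{F,k}^2)$ turns the factor $32D^2\sigma_{F,K}/\lambda_2(L_{\overline{\vec{M}}_{w^\star}})^2$ into $32D^2(1-1/k)^2\gamma_{F,k}/\lambda_2(L_{\overline{\vec{M}}_{1/k^2}})^2$, which is exactly the claimed bound on $\mse(\widehat{\theta},\theta^\star)$.

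There is no genuine obstacle here: all the work is done by Theorem~\ref{thm:karyub}, and the only point requiring care is the bookkeeping of the constant-weight rescaling — in particular correctly identifying $k^2w^\star(k)$ with $1/((1-1/k)\gamma_{F,k})$ from the definitions, and checking that the rescaling of $\lambda_2(L_{\overline{\vec{M}}_{w^\star}})$ is applied consistently in both the hypothesis and the conclusion so that the same quantity $\lambda_2(L_{\overline{\vec{M}}_{1/k^2}})$ appears throughout.
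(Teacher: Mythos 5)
Your proposal is correct and follows exactly the route the paper intends: the corollary is a direct specialization of Theorem~\ref{thm:karyub} to $K=\{k\}$ via the identities $\sigma_{F,K}=1/\gamma_{F,k}$ and $\overline{\vec{M}}_{w^\star}=\overline{\vec{M}}_{1/k^2}/\bigl((1-1/k)\gamma_{F,k}\bigr)$, and your constant bookkeeping checks out. Your parenthetical observation is also right: the hypothesis as printed in the corollary omits the $n\log(n)/m$ factor that the specialization of Theorem~\ref{thm:karyub} actually produces, which appears to be a typo in the paper rather than a flaw in your argument.
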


\begin{corollary} Suppose that comparison sets are independent with each comparison set being a sample without replacement from the set of all items, conditions {\bf A1}, {\bf A2}, {\bf A3} hold, and $m \geq 32(1-1/n)/C n \log(n)$. Then, with probability at least $1-3/n$,
$$
\mse(\widehat{\theta},\theta^\star) \le 
32 D^2 \left(1-\frac{1}{n}\right)^2\gamma_{F,k} \frac{n(\log(n)+2)}{m}.
$$
\label{cor:ksize}
\end{corollary}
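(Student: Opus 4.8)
}
The plan is to obtain Corollary~\ref{cor:ksize} as a direct specialization of Theorem~\ref{thm:karyub} to \emph{a priori unbiased} comparison sets of a fixed cardinality $k$; the only computation of substance is evaluating the Fiedler value $\lambda_2(L_{\overline{\vec M}_{w^\star}})$ of the expected weighted-adjacency matrix in this case, and then checking that all the hypotheses and constants line up.

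First I would note that when each of the $m$ comparison sets is an independent uniform random sample of size $k$ drawn without replacement from $N$, every pair of items has the same inclusion probability, so the comparison sets are a priori unbiased; concretely $\E[m_{i,j}(k)] = m\binom{n-2}{k-2}/\binom{n}{k} = m k(k-1)/(n(n-1))$ for every pair $\{i,j\}$, exactly as in the deterministic computation preceding (\ref{equ:unbiased}). Hence every off-diagonal entry of $\overline{\vec M}_{w^\star}$ equals $(n/m)\,w^\star(k)\,\E[m_{i,j}(k)] = w^\star(k)k(k-1)/(n-1)$, which by the identity $k(k-1)w^\star(k) = 1/\gamma_{F,k}$ (see (\ref{equ:gamma}) and the remark following it) equals $1/((n-1)\gamma_{F,k})$.

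Next I would identify $L_{\overline{\vec M}_{w^\star}}$ as the Laplacian of a complete graph on $n$ vertices with uniform edge weight $1/((n-1)\gamma_{F,k})$, whose eigenvalues are $0$ together with $n/((n-1)\gamma_{F,k}) = 1/((1-1/n)\gamma_{F,k})$ with multiplicity $n-1$ (equivalently, apply (\ref{equ:lambda2L}) with $\mu(k)=1$ and the constant weight $w^\star(k)$). Thus $\lambda_2(L_{\overline{\vec M}_{w^\star}}) = 1/((1-1/n)\gamma_{F,k})$. Taking $\sigma_{F,K} = 1/\gamma_{F,k}$, which is admissible since $K=\{k\}$, the precondition $\lambda_2(L_{\overline{\vec M}_{w^\star}}) \ge 32(\sigma_{F,K}/C)n\log(n)/m$ of Theorem~\ref{thm:karyub} reduces, after cancelling $\gamma_{F,k}$ on both sides, to $m \ge 32(1-1/n)n\log(n)/C$, which is exactly the hypothesis of the corollary.

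Finally I would substitute $\sigma_{F,K}=1/\gamma_{F,k}$ and $\lambda_2(L_{\overline{\vec M}_{w^\star}})^{-2} = (1-1/n)^2\gamma_{F,k}^2$ into the conclusion of Theorem~\ref{thm:karyub}; the factors of $\gamma_{F,k}$ combine to give $\mse(\widehat\theta,\theta^\star) \le 32 D^2(1-1/n)^2\gamma_{F,k}\,n(\log(n)+2)/m$ with probability at least $1-3/n$, as claimed. I do not expect any real obstacle here: the whole argument is bookkeeping on top of Theorem~\ref{thm:karyub}, and the one place to be careful is keeping the $1-1/n$ factors straight when passing between the mean pair-inclusion count, the complete-graph Laplacian spectrum, and the final mean squared error bound.
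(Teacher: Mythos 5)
Your derivation is correct and is exactly the intended route: the paper states Corollary~\ref{cor:ksize} with no separate proof, leaving precisely the specialization of Theorem~\ref{thm:karyub} that you carry out, and your values $\lambda_2(L_{\overline{\vec M}_{w^\star}}) = n/\left((n-1)\gamma_{F,k}\right)$, the reduction of the precondition to $m \ge 32(1-1/n)n\log(n)/C$, and the final bound all match the corollary's statement, constants included. One small caveat: your parenthetical appeal to (\ref{equ:lambda2L}) is not actually equivalent to your direct computation --- as printed, (\ref{equ:lambda2L}) yields $(1-1/n)/\gamma_{F,k}$ rather than $1/\left((1-1/n)\gamma_{F,k}\right)$ (the prefactor there appears to be inverted relative to the complete-graph Laplacian spectrum $\lambda_2 = n\,\overline{m}_{i,j}$ used elsewhere in the paper); your direct computation is the one consistent with the corollary, so rely on it and drop that cross-reference.
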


\subsection{Lower Bound}

In this section, we present a lower bound for the mean squared error of the maximum likelihood parameter estimator, which establishes minimax optimality of the established upper bounds. We define the following conditions:
\begin{itemize}
\item[{\bf A1'}] There exists $\widetilde{A} > 0$ such that for all $S\subseteq N$ with $|S|\in K$, all $y\in S$, all $i,j\in S$ such that $i\neq j$, and all $\theta\in [-b,b]^n$, it holds
$$
\frac{\partial^2 (-\log(p_{y,S}(\theta)))}{\partial \theta_i \partial \theta_j} \geq \widetilde{A}\frac{\partial^2 (-\log(p_{y,S}(\vec{0})))}{\partial \theta_i \partial \theta_j}.
$$

\item[{\bf A3'}] There exists $\widetilde{C} > 0$ such that for all $S\subseteq N$ with $|S|\in K$, all $y\in S$, and all $\theta \in [-b,b]^n$, it holds
$$
p_{y,S}(\theta) \leq \widetilde{C} p_{y,S}({\bm 0}). 
$$
\end{itemize}

Notice that, in particular, for the special case of $F$ being a double-exponential distribution with parameter $\beta$, we have that {\bf A1'} and {\bf A3'} hold with $\widetilde{A} = e^{4b/\beta}$ and $\widetilde{C} = e^{2b/\beta}$.

\begin{theorem} Under conditions {\bf A1'} and {\bf A3'}, for any unbiased estimator $\widehat \theta$, we have
$$
\E[\mse(\widehat{\theta},\theta^\star)] \ge \frac{1}{\widetilde{A}\widetilde{C}} \left(\sum_{i=2}^n\frac{1}{\lambda_i(L_{\overline{\vec{M}}_F})}\right)\, \frac{1}{m}. 
$$ 
\label{thm:cramer-rao-bound}
\end{theorem}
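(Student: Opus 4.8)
The plan is a constrained Cram\'er--Rao argument carried out on the identifiable subspace $V=\{\vec{v}\in\reals^n:\vec{1}^\top\vec{v}=0\}$. Replacing $\widehat\theta$ by its orthogonal projection onto $V$ neither increases the error nor destroys unbiasedness (since $\theta^\star\in V$ and the projection is linear), so we may assume $\widehat\theta$ is $V$-valued; then unbiasedness gives $\E[\mse(\widehat\theta,\theta^\star)]=\tfrac1n\E\|\widehat\theta-\theta^\star\|_2^2=\tfrac1n\tr(\cov(\widehat\theta))$. Let $\calI(\theta^\star)=\E[-\nabla^2\ell(\theta^\star)]=\sum_{t=1}^m\E[-\nabla^2\log p_{y_t,S_t}(\theta^\star)]$ be the Fisher information of the $m$ observations, the expectation averaging out both the choices $y_t$ and, when the comparison sets are random, the sets $S_t$ (whose law does not involve $\theta$). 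Each $-\log p_{y,S}$ depends on $\theta$ only through coordinate differences, so $\calI(\theta^\star)\vec{1}=\vec{0}$; if $\overline{\vec M}_F$ is not irreducible the right-hand side of the theorem is $+\infty$ and there is nothing to prove, so we may assume it is, in which case $\calI(\theta^\star)$ is positive definite on $V$ for every $\theta^\star\in[-b,b]^n$. The Cram\'er--Rao bound for a singular Fisher information then gives $\cov(\widehat\theta)\succeq\calI(\theta^\star)^{+}$, both sides supported on $V$, whence $\E[\mse(\widehat\theta,\theta^\star)]\ge\tfrac1n\tr(\calI(\theta^\star)^{+})$.

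The core step is to bound $\calI(\theta^\star)$ from above in the Loewner order, uniformly over $\theta^\star\in[-b,b]^n$, by a multiple of $L_{\overline{\vec M}_F}$. Fix a comparison set $S$ with $|S|=k$ and a $y\in S$. The Hessian $-\nabla^2\log p_{y,S}(\theta)$ has rows summing to zero and off-diagonal entries $-\,\partial^2(-\log p_{y,S}(\theta))/\partial\theta_i\partial\theta_j$, which are nonnegative (this is part of {\bf A1}), so it is the Laplacian of a nonnegatively weighted graph on $S$; by {\bf A1'} these edge weights are dominated entrywise by $\widetilde A$ times their values at $\theta=\vec{0}$, and since the Laplacian quadratic form $\vec{x}^\top L_{\vec{W}}\vec{x}=\sum_{i<j}W_{ij}(x_i-x_j)^2$ is nondecreasing in the weights, $-\nabla^2\log p_{y,S}(\theta)\preceq\widetilde A\,(-\nabla^2\log p_{y,S}(\vec{0}))$ for all $\theta\in[-b,b]^n$. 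The single-comparison Fisher information of the set $S$ is $\calI_S(\theta^\star)=\sum_{y\in S}p_{y,S}(\theta^\star)(-\nabla^2\log p_{y,S}(\theta^\star))$; applying the previous relation term by term (the mixing weights $p_{y,S}(\theta^\star)$ are nonnegative) and then {\bf A3'}, i.e.\ $p_{y,S}(\theta^\star)\le\widetilde C\,p_{y,S}(\vec{0})=\widetilde C/k$, together with $-\nabla^2\log p_{y,S}(\vec{0})\succeq\vec{0}$, yields
$$
\calI_S(\theta^\star)\ \preceq\ \widetilde A\widetilde C\sum_{y\in S}p_{y,S}(\vec{0})\big(-\nabla^2\log p_{y,S}(\vec{0})\big)\ =\ \widetilde A\widetilde C\,\calI_S(\vec{0}).
$$
Summing over $t$ and taking expectations over the comparison sets, $\calI(\theta^\star)\preceq\widetilde A\widetilde C\sum_{t=1}^m\E[\calI_{S_t}(\vec{0})]=\widetilde A\widetilde C\,\tfrac mn\,L_{\overline{\vec M}_F}$; the last identity expresses $\overline{\vec M}_F$ as the ($n/m$-normalised) expected weighted-adjacency matrix whose weight function records the per-comparison Fisher-information edge weight at $\theta=\vec{0}$, and a short computation using $p_{y,S}(\vec{0})=1/k$ and the symmetry of $\nabla\log p_{y,S}(\vec{0})$ under permutations of $S\setminus\{y\}$ shows that this edge weight equals $w^\star(k)=\big(k\,\partial p_k(\vec{0})/\partial x_1\big)^2$, so that in fact $\overline{\vec M}_F=\overline{\vec M}_{w^\star}$.

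It remains to invert and take traces. Both $\calI(\theta^\star)$ and $L_{\overline{\vec M}_F}$ are positive definite on $V$, so the displayed Loewner inequality inverts there to $\calI(\theta^\star)^{+}\succeq\tfrac1{\widetilde A\widetilde C}\,\tfrac nm\,L_{\overline{\vec M}_F}^{+}$ (inversion reverses the Loewner order; see the elementary facts recalled in Appendix~\ref{sec:back}, e.g.\ Lemma~\ref{lem:EigAsuccB}). Taking traces gives $\tr(\calI(\theta^\star)^{+})\ge\tfrac1{\widetilde A\widetilde C}\,\tfrac nm\sum_{i=2}^n 1/\lambda_i(L_{\overline{\vec M}_F})$, and substituting into $\E[\mse(\widehat\theta,\theta^\star)]\ge\tfrac1n\tr(\calI(\theta^\star)^{+})$ cancels the factor $n$ and produces exactly the claimed inequality.

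The step I expect to be most delicate is making the Cram\'er--Rao bound precise in the presence of a singular Fisher information: one must phrase the bound on the quotient $\reals^n/\mathrm{span}(\vec{1})$ (equivalently, restrict to $V$-valued unbiased estimators and use Moore--Penrose inverses), argue that irreducibility of $\overline{\vec M}_F$ indeed forces $\calI(\theta^\star)$ to be positive definite on $V$ for all $\theta^\star\in[-b,b]^n$ (which is immediate for the Luce model and more generally whenever the relevant mixed partials are strictly negative), and verify the routine regularity conditions (interchanging $\nabla$ with $\E$, finiteness of $\calI$) that hold automatically here because the parameter set is compact and each observation has finite support. A secondary, bookkeeping obstacle is that the scalar conditions {\bf A1'} and {\bf A3'} upgrade to Loewner inequalities only because every per-observation Hessian is a nonnegatively weighted graph Laplacian — so that entrywise domination of weights implies the operator order — and because $\widetilde A,\widetilde C$ are uniform over $[-b,b]^n$, which is precisely what makes the final bound independent of $\theta^\star$.
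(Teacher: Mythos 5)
Your proposal is correct and follows essentially the same route as the paper: apply the Cram\'er--Rao bound on the mean-zero subspace, dominate the Fisher information by $\widetilde{A}\widetilde{C}\,\tfrac{m}{n}L_{\overline{\vec{M}}_F}$ via the chain $p_{y,S}(\theta)\le\widetilde{C}/|S|$ and $\nabla^2(-\log p_{y,S}(\theta))\preceq\widetilde{A}\,\nabla^2(-\log p_{y,S}(\vec{0}))$ (the paper's Lemma~\ref{lem:explog} supplies the identification of the resulting edge weights with $w^\star(k)$), then invert on that subspace and take traces. The only differences are presentational --- you phrase the restricted Cram\'er--Rao bound via the Moore--Penrose inverse where the paper uses the Jacobian of $\psi(\theta)=\theta-\tfrac1n(\vec{1}^\top\theta)\vec{1}$ and the matrix $\vec{U}$ --- and you are in fact slightly more careful than the paper in flagging that the nonpositivity of the off-diagonal Hessian entries (needed to turn the scalar conditions into Loewner inequalities) comes from {\bf A1} rather than {\bf A1'}.
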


The following two corollaries follow from the last theorem.

\begin{corollary} If all comparison sets are of cardinality $k\geq 2$, then any unbiased estimator $\widehat \theta$ satisfies
$$
\E[\mse(\widehat{\theta},\theta^\star)] \ge \frac{1}{\widetilde{A}\widetilde{C}}\left(1-\frac{1}{k}\right)\gamma_{F,k} \left(\sum_{i=2}^n\frac{1}{\lambda_i(L_{\overline{\vec{M}}})}\right)\, \frac{1}{m}. 
$$ 
\end{corollary}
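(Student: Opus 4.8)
The plan is to derive the corollary directly from Theorem~\ref{thm:cramer-rao-bound} by specializing to the situation in which every comparison set has cardinality $k$; essentially all that is needed is to rewrite the Laplacian $L_{\overline{\vec{M}}_F}$ (where $\overline{\vec{M}}_F=\overline{\vec{M}}_{w^\star}$ is the expected weighted-adjacency matrix associated with the weight function $w^\star$ of (\ref{equ:theweight})) in terms of the matrix $\overline{\vec{M}}:=\overline{\vec{M}}_{1/k^2}$ built from the weight function $w(k)=1/k^2$, consistently with the fixed-cardinality corollary of Theorem~\ref{thm:karyub}.

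First I would isolate an elementary scaling fact about the construction (\ref{equ:weightmatrix}): if all comparison sets share one and the same cardinality $k$, then only the $k$-th term survives in that sum, so for any two weight functions $w$ and $w'$ one has $\overline{\vec{M}}_{w}=(w(k)/w'(k))\,\overline{\vec{M}}_{w'}$ entrywise; since $\vec{A}\mapsto L_{\vec{A}}$ is linear this gives $L_{\overline{\vec{M}}_{w}}=(w(k)/w'(k))\,L_{\overline{\vec{M}}_{w'}}$, hence $\lambda_i(L_{\overline{\vec{M}}_{w}})=(w(k)/w'(k))\,\lambda_i(L_{\overline{\vec{M}}_{w'}})$ for every $i$. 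Taking $w=w^\star$ and $w'\equiv 1/k^2$ yields $\lambda_i(L_{\overline{\vec{M}}_F})=k^2 w^\star(k)\,\lambda_i(L_{\overline{\vec{M}}_{1/k^2}})$ for all $i$.

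Next I would plug in the identity $1/\gamma_{F,k}=k(k-1)w^\star(k)$, recorded just before Theorem~\ref{thm:karyub}, which collapses the prefactor to $k^2 w^\star(k)=k^2/(k(k-1)\gamma_{F,k})=1/((1-1/k)\gamma_{F,k})$. Consequently $\lambda_i(L_{\overline{\vec{M}}_F})=\lambda_i(L_{\overline{\vec{M}}_{1/k^2}})/((1-1/k)\gamma_{F,k})$, so that $\sum_{i=2}^n 1/\lambda_i(L_{\overline{\vec{M}}_F})=(1-1/k)\gamma_{F,k}\sum_{i=2}^n 1/\lambda_i(L_{\overline{\vec{M}}_{1/k^2}})$. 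Substituting this into the bound of Theorem~\ref{thm:cramer-rao-bound}, whose hypotheses {\bf A1'} and {\bf A3'} are exactly those assumed in the corollary, produces the asserted inequality.

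There is no genuine obstacle here: the statement is a bookkeeping corollary, and the only point requiring care is to track the normalizations — the $n/m$ factor in (\ref{equ:weightmatrix}), the weight function implicit in $\overline{\vec{M}}_F$ versus the $1/k^2$-weighting used in the corollary, and the precise form of the relation between $w^\star(k)$ and $\gamma_{F,k}$ — so that the constants come out exactly as stated and not up to a $k$-dependent factor. (Reading $\overline{\vec{M}}$ instead as the unweighted $\overline{\vec{M}}_1$ would run the same computation with $w'\equiv 1$ and replace the factor $(1-1/k)\gamma_{F,k}$ by $k(k-1)\gamma_{F,k}$.)
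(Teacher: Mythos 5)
Your derivation is correct and is exactly the specialization the paper intends (the paper states this corollary without proof, as an immediate consequence of Theorem~\ref{thm:cramer-rao-bound}): with a single cardinality $k$ the Laplacians for different weight functions differ by the scalar $k^2 w^\star(k)=1/\bigl((1-1/k)\gamma_{F,k}\bigr)$, which converts $L_{\overline{\vec{M}}_F}=L_{\overline{\vec{M}}_{w^\star}}$ into $L_{\overline{\vec{M}}_{1/k^2}}$ with precisely the stated prefactor. You are also right to flag that $\overline{\vec{M}}$ here must be read as $\overline{\vec{M}}_{1/k^2}$ (consistent with the fixed-cardinality corollary of Theorem~\ref{thm:karyub}) rather than the literal $\overline{\vec{M}}_1$ of the paper's notational convention, under which the constant would instead be $k(k-1)\gamma_{F,k}$.
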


\begin{corollary} If, in addition, each comparison set is drawn independently, uniformly at random from the set of all items, then any unbiased estimator $\widehat \theta$ satisfies 
$$
\E[\mse(\widehat{\theta},\theta^\star)] \ge \frac{1}{\widetilde{A}\widetilde{C}} \left(1-\frac{1}{n}\right)^2 \gamma_{F,k} \frac{n}{m}. 
$$
\end{corollary}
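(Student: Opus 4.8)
The plan is to obtain the claim as a direct specialization of Theorem~\ref{thm:cramer-rao-bound} (of which the two displayed corollaries are successive specializations) to the case in which every comparison set is an independent uniform sample of $k$ items from $N$. In this situation the expected weighted-adjacency matrix $\overline{\vec M}_F$ (built from the weight function $w^\star$ of (\ref{equ:theweight})) is a scalar multiple of the complete-graph adjacency matrix, so its Laplacian spectrum is available in closed form and the sum $\sum_{i=2}^n 1/\lambda_i(L_{\overline{\vec M}_F})$ can be evaluated exactly.

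First I would observe that uniform random sampling of $k$-subsets makes the comparison sets a priori unbiased: every pair of items lies in a sampled comparison set with the same probability, so all off-diagonal entries of $\overline{\vec M}_F$ are equal. Applying the unbiased-case identity (\ref{equ:unbiased}) with the weight function $w^\star$ and $\mu(k)=1$, each off-diagonal entry equals $c := w^\star(k)k(k-1)/(n-1)$, i.e.\ $\overline{\vec M}_F = c(\vec J-\vec I)$, where $\vec J$ is the all-ones matrix and $\vec I$ the identity. Then $L_{\overline{\vec M}_F} = \mathrm{diag}(\overline{\vec M}_F\vec 1) - \overline{\vec M}_F = c(n-1)\vec I - c(\vec J-\vec I) = cn\vec I - c\vec J$, which has eigenvalue $0$ with eigenvector $\vec 1$, and eigenvalue $cn = n\,w^\star(k)k(k-1)/(n-1)$ with multiplicity $n-1$.

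Next I would substitute this spectrum into Theorem~\ref{thm:cramer-rao-bound}. Since all $n-1$ nonzero eigenvalues coincide, $\sum_{i=2}^n 1/\lambda_i(L_{\overline{\vec M}_F}) = (n-1)^2/(n\,w^\star(k)k(k-1))$; invoking the identity $k(k-1)w^\star(k) = 1/\gamma_{F,k}$ (equivalently $\|\nabla\log(p_{y,S}(\vec 0))\|_2^2 = 1/\gamma_{F,k}$, recorded just before Theorem~\ref{thm:karyub}) rewrites this as $(n-1)^2\gamma_{F,k}/n = (1-1/n)^2\,n\,\gamma_{F,k}$. Plugging into $\E[\mse(\widehat\theta,\theta^\star)] \ge \frac{1}{\widetilde A\widetilde C}\big(\sum_{i=2}^n 1/\lambda_i(L_{\overline{\vec M}_F})\big)/m$ gives exactly the asserted bound. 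The only step that needs any care is the bookkeeping of the weight function $w^\star$ and of the $n/m$ normalization in (\ref{equ:weightmatrix})--(\ref{equ:unbiased}) when writing down $\overline{\vec M}_F$; after that the argument is a short eigenvalue computation for a scaled complete graph, and there is no probabilistic content left since Theorem~\ref{thm:cramer-rao-bound} has already absorbed the randomness of the comparison sets into $\overline{\vec M}_F$.
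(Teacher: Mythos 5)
Your proposal is correct and matches the paper's (implicit) intended argument: the paper states only that the corollary "follows from the last theorem," and the intended derivation is exactly your computation — uniform sampling makes $\overline{\vec M}_F$ a scaled complete-graph adjacency matrix with all nonzero Laplacian eigenvalues equal to $n\,w^\star(k)k(k-1)/(n-1)$, so $\sum_{i=2}^n 1/\lambda_i(L_{\overline{\vec M}_F}) = (1-1/n)^2 n\,\gamma_{F,k}$, which plugged into Theorem~\ref{thm:cramer-rao-bound} gives the stated bound. Deriving directly from the theorem rather than chaining through the preceding corollary is the right call, since that reproduces the stated constant exactly.
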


The last corollary implies that under the given assumptions, for the mean squared error to be smaller than a constant, it is necessary that the number of observed comparisons is $m =\Omega(\gamma_{F,k} n)$.

{\bf Proof of Theorem~\ref{thm:cramer-rao-bound}} We denote by  $\cov[Y]$ the covariance matrix  of a multivariate random variable $Y$ i.e.,
$$
\cov[Y] = \E[(Y-\E[Y])(Y-\E[Y])^\top].
$$

The proof uses the Cram\'{e}r-Rao inequality, which is stated as follows.

\begin{lemma}[Cram\'{e}r-Rao bound] Let $X$ be a multivariate random variable with distribution $p(x;\theta)$, for parameter $\theta \in \Theta$, and let $\psi: \Theta \rightarrow \reals^r$ be a differentiable function. Then, for any unbiased estimator $\vec{T}(X) = (T_1 (X), \dots, T_r (X) )^\top$ of $\vec{\psi}(\theta) = (\psi_1 (\theta), \dots, \psi_r (\theta ) )^\top$, we have
$$
\cov[\vec{T}(X)] \ge \frac{\partial \boldsymbol{\psi}(\theta)}{\partial \theta} F^{-1}(\theta) \frac{\partial \boldsymbol{\psi}(\theta)}{\partial \theta}^\top
$$
where $\frac{\partial \vec{\psi}(\theta)}{\partial \theta}$ is the Jacobian matrix of $\psi$ and $F(\theta)$ is the Fisher information matrix given by
$$
F(\theta) = \E[\nabla^2(-\log(p(X;\theta)))].
$$
\label{prop:cramer}
\end{lemma}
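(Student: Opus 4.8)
The plan is to prove the inequality through the score function together with the positive semidefiniteness of a joint covariance matrix. Write $d$ for the dimension of $\theta$ and let $\vec{U}(X) = \nabla_\theta \log p(X;\theta) \in \reals^d$ denote the score vector. Under the usual regularity conditions permitting differentiation of $\int p(x;\theta)\,dx = 1$ under the integral sign, one gets $\E[\vec{U}(X)] = \vec{0}$. Differentiating a second time and substituting $\nabla_\theta^2 p = p\,(\nabla_\theta^2\log p + \vec{U}\vec{U}^\top)$, then using $\int \nabla_\theta^2 p\,dx = \vec{0}$, yields the two equivalent expressions for the Fisher information, $F(\theta) = \E[\nabla^2(-\log(p(X;\theta)))] = \E[\vec{U}(X)\vec{U}(X)^\top] = \cov[\vec{U}(X)]$. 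This reconciles the Hessian form stated in the lemma with the covariance form used in the remainder of the argument.

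Next I would differentiate the unbiasedness identity. Since $\E[\vec{T}(X)] = \vec{\psi}(\theta)$ holds for all $\theta$, differentiating both sides and again passing the derivative inside the integral gives
$$
\frac{\partial \vec{\psi}(\theta)}{\partial \theta} = \int \vec{T}(x)\,\nabla_\theta p(x;\theta)^\top\,dx = \int \vec{T}(x)\,\vec{U}(x)^\top\,p(x;\theta)\,dx = \E[\vec{T}(X)\vec{U}(X)^\top].
$$
Because $\E[\vec{U}] = \vec{0}$, the right-hand side equals the cross-covariance, so with $J := \partial\vec{\psi}(\theta)/\partial\theta$ we obtain $\cov[\vec{T}(X),\vec{U}(X)] = J$.

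The final step assembles these facts. The stacked vector $(\vec{T}(X)^\top, \vec{U}(X)^\top)^\top$ has covariance matrix
$$
\Sigma = \begin{pmatrix} \cov[\vec{T}(X)] & J \\ J^\top & F(\theta) \end{pmatrix},
$$
which is positive semidefinite since it is a genuine covariance matrix. Assuming $F(\theta)$ is nonsingular (needed for $F^{-1}$ to be meaningful in the statement), the Schur-complement criterion for block positive semidefiniteness gives $\cov[\vec{T}(X)] - J F(\theta)^{-1} J^\top \succeq \vec{0}$, which is precisely the claimed bound. I could equivalently avoid the Schur complement by fixing an arbitrary $\vec{a}\in\reals^r$ and computing the variance of $\vec{a}^\top\vec{T}(X) - \vec{a}^\top J F(\theta)^{-1}\vec{U}(X)$; using $\cov[\vec{T}(X),\vec{U}(X)] = J$ and $\cov[\vec{U}(X)] = F(\theta)$, this variance reduces to $\vec{a}^\top(\cov[\vec{T}(X)] - JF(\theta)^{-1}J^\top)\vec{a}$, which is nonnegative, and letting $\vec{a}$ range over $\reals^r$ recovers the matrix inequality.

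I expect the main difficulty to be analytic rather than algebraic: each displayed identity rests on interchanging differentiation and integration — once to obtain $\E[\vec{U}]=\vec{0}$ and the Jacobian identity, and twice for the equivalence of the two Fisher-information forms. I would therefore state the regularity conditions explicitly (an open $\Theta$, smoothness of $\theta\mapsto p(x;\theta)$, and dominating integrable bounds on $\nabla_\theta p$ and $\nabla_\theta^2 p$ justifying dominated convergence), together with the standing nonsingularity of $F(\theta)$. For the smooth Thurstone choice densities considered here these conditions hold with the relevant derivatives uniformly bounded on the compact parameter region, so the bound applies without further qualification.
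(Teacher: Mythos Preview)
Your argument is correct and is the standard derivation of the multivariate Cram\'{e}r--Rao inequality via the score function identity $\cov[\vec{T},\vec{U}]=J$ together with the Schur complement of the joint covariance block matrix. There is nothing to compare against: the paper states this lemma as a classical fact and invokes it without proof in the derivation of the lower bound on the mean squared error, so your write-up simply supplies the missing background justification.
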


Let us define $\psi_i(\theta) = \theta_i - \frac{1}{n}\sum_{l=1}^n\theta_l$ for all $i = 1,2,\ldots,n$. Since $\sum_{i=1}^n \theta_i =0$, we have $\sum_{i=1}^n \psi_i (\theta) = 0$. Note that we can write 
\begin{equation}
\frac{\partial \boldsymbol{\psi}(\theta)}{\partial \theta} = \vec{I} - \frac{1}{n}\vec{1}\vec{1}^\top.\label{eq:psi}
\end{equation} 

Let $F(\theta)$ be the Fisher information matrix given by
\begin{equation}
F(\theta) = \sum_{t=1}^m \E\left[\nabla^2(-\log(p_{y_t,S_t}(\theta))) \right] .\label{eq:fisher-p1}
\end{equation}

By conditions {\bf A1'} and {\bf A3'}, and Lemma~\ref{lem:explog}, we have
\begin{align}
\E\left[\nabla^2(-\log(p_{y_t,S_t}(\theta)))\right]
& =\sum_{y \in S_t} p_{y,S_t}(\theta) \nabla^2 (-\log(p_{y,S_t}(\theta))) \cr
& \preceq \sum_{y \in S_t} \frac{\widetilde{C}}{|S_t|} \nabla^2 (-\log(p_{y,S_t}(\theta)))\cr
& \preceq \sum_{y \in S_t} \frac{\widetilde{A}\widetilde{C}}{|S_t|} \nabla^2 (-\log(p_{y,S_t}({\bm 0})))\cr
& = \widetilde{A}\widetilde{C} \left(|S_t|\frac{\partial p_{|S_t|}(\vec{0})}{\partial x_1}\right)^2L_{\vec{M}_{S_t}}
\label{eq:hessian-s1}
\end{align}
where $\vec{M}_S$ is a matrix in $\reals^{n\times n}$ that has each element $(i,j)$ such $\{i,j\}\subseteq S$ equal to $1$ and all other elements equal to $0$.

From \eqref{eq:fisher-p1} and \eqref{eq:hessian-s1},
\begin{equation}
F(\theta) \preceq  \widetilde{A}\widetilde{C} \frac{m}{n} L_{\overline{\vec{M}}_F}. 
\label{eq:fisher-up}
\end{equation}

Note that
$$
\E[\|\widehat{\theta}-\theta\|_2^2] = \tr(\cov[\vec{T}(X)])
= \sum_{i=1}^n \lambda_i(\cov[\vec{T}(X)]).
$$

By the Cram\'{e}r-Rao bound and (\ref{eq:fisher-up}), we have 
\begin{eqnarray*}
\frac{1}{n}\E[\|\widehat{\theta}-\theta\|_2^2]  
&\geq& \frac{1}{n}\sum_{i=1}^n \lambda_i \left(\frac{\partial \boldsymbol{\psi}(\theta)}{\partial \theta} F^{-1}(\theta) \frac{\partial \boldsymbol{\psi}(\theta)}{\partial \theta}^\top \right)\\
&=& \frac{1}{n}\sum_{i=1}^{n-1}  \lambda_i(\vec{U}^\top F^{-1}(\vec{0}) \vec{U})\\
&=& \frac{1}{n}\sum_{i=1}^{n-1}  \frac{1}{\lambda_i(\vec{U}^\top F(\vec{0}) \vec{U})}\\
& \geq &\frac{1}{\widetilde{A}\widetilde{C}} \sum_{i=1}^{n-1}  \frac{1}{\lambda_i(\vec{U}^\top L_{\overline{\vec{M}}_F} \vec{U})}\\
& = &\frac{1}{\widetilde{A}\widetilde{C} m} \sum_{i=2}^{n}  \frac{1}{\lambda_i(L_{\overline{\vec{M}}_F})}.
\end{eqnarray*}

\section{Rank-Breaking Parameter Estimation Methods}
\label{sec:rankbreaking}
The maximum likelihood parameter estimation requires to find a maximizer of a log-likelihood function which for comparison sets of cardinality $k$ has the form of a sum of $\binom{n}{k}$ elements in the worst case. For pair comparisons, this sum consists of at most $n^2$ elements. It is common for parameter estimators to be defined as maximizers of a pseudo log-likelihood function, which is defined as the log-likelihood function of pair comparisons deduced from the input observations under assumption that these pair comparisons are independent (which in general is false under a Thurstone choice model for comparison sets of three or more items). This is commonly referred as \emph{rank breaking}. In what follows, we consider two different rank-breaking methods: (a) one that deduces $k-1$ pair comparisons from a choice from a comparison set of cardinality $k$, we refer to as rank-breaking method $\mathsf{ALL}$, and (b) one that deduces $1$ pair comparison from a choice from a comparison set of cardinality $k$, we refer to as rank-breaking method $\mathsf{ONE}$. 

\label{sec:rankA}
\paragraph{Rank-breaking method $\mathsf{ALL}$} This rank-breaking method deduces $k-1$ pair comparisons from each comparison set of cardinality $k$. Specifically, for every comparison set $S_t$, the method uses pair comparisons between the chosen item $y_t$ and each non-chosen item $v\in S_t\setminus\{ y_t\}$. Notice that for any comparison set of three or more items, the pair comparisons selected from this set by the given rank-breaking method are not independent. 

For the Luce choice model, the pseudo log-likelihood function is given by (\ref{equ:pseudolikk}), which can be written in a more explicit form as follows
$$
\ell_{k-1}(\theta) = \sum_{t=1}^m\sum_{v\in S_t\setminus\{y_t \}} \log\left(\frac{e^{\theta_{y_t}}}{e^{\theta_{y_t}} + e^{\theta_v}}\right).
$$
We consider the maximum pseudo log-likelihood estimator $\widehat{\theta}_{k-1} := \arg \max_{\theta \in \Theta} \ell_{k-1}(\theta)$.
 
\begin{theorem} If $\lambda_2\left(L_\vec{M} \right) \ge 128(k-1)^2e^{2b} n\log (n)/m$, then with probability at least $1-3/n$,
$$
\mse(\widehat{\theta}_{k-1}, \theta^\star) \le D^2 
\frac{n(\log(n)+2)}{\lambda_2(L_\vec{M})^2}\frac{1}{m}. 
$$
where $D = 16\sqrt{2}\sqrt{k(k-1)^3} e^{2b}$.
\label{thm:break-1}
\end{theorem}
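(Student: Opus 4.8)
The plan is to invoke the key lemma (Lemma~\ref{lem:mle-taylor}) with $g=-\ell_{k-1}$, $\widehat{\vec\theta}=\widehat\theta_{k-1}$, and $\vec\theta^\star$ the true parameter on $\Theta=[-b,b]^n\cap\{\vec\theta:\vec\theta^\top\vec 1=\vec 0\}$, which reduces the claim to the two-step programme S1--S2 from Section~\ref{sec:defs}: it suffices to produce $\alpha>0$ with $\|\nabla g(\vec\theta^\star)\|_2\le\alpha$ and $\beta>0$ with $\min_{\vec\theta\in\Theta}\lambda_2(\nabla^2 g(\vec\theta))\ge\beta$, and then $\mse(\widehat\theta_{k-1},\theta^\star)\le 4\alpha^2/(n\beta^2)$. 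Here the comparison sets $S_1,\dots,S_m$ are treated as fixed and only the choices $y_t$ are random, which is why the bound is stated through the realised matrix $\vec M$ rather than an expected weighted-adjacency matrix. The hypotheses needed for Lemma~\ref{lem:mle-taylor}, namely $\nabla^2 g(\vec\theta)\vec 1=\vec 0$ and $\lambda_2(\nabla^2 g(\vec\theta))>0$ on $\Theta$, will fall out of S2 since $\nabla^2 g$ is a sum of (random) graph Laplacians.

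For S1, write $\nabla(-\ell_{k-1}(\theta))=\sum_{t=1}^m X_t$, where $X_t$ is the per-observation contribution associated with $S_t$: in coordinates it has entry $-\sum_{v\in S_t\setminus\{y_t\}}p_{v,y_t}(\theta)$ at $y_t$, entry $p_{i,y_t}(\theta)$ at each $i\in S_t\setminus\{y_t\}$, and zero elsewhere, so $\|X_t\|_2\le\sqrt{k(k-1)}$ on $\Theta$ because every choice probability is at most $1$. The crucial structural fact, special to the Luce model together with the $\mathsf{ALL}$ breaking, is that $\E[X_t]=\vec 0$ when the gradient is evaluated at $\vec\theta^\star$ and $y_t$ is drawn from $p_{\cdot,S_t}(\vec\theta^\star)$: a short algebraic identity using the Luce form of $p_{u,S}$ and $p_{u,v}$ shows $p_{i,S_t}(\vec\theta^\star)\sum_{v\ne i}p_{v,i}(\vec\theta^\star)=\sum_{u\ne i}p_{u,S_t}(\vec\theta^\star)p_{i,u}(\vec\theta^\star)$ for every $i\in S_t$, which is precisely the consistency property announced for this estimator. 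Hence $\{X_t\}$ are independent, mean zero, and uniformly norm-bounded, and the vector Azuma--Hoeffding bound (Lemma~\ref{prop:azuma-hoeffding}) gives $\|\nabla(-\ell_{k-1}(\vec\theta^\star))\|_2\le\alpha:=\sqrt{2k(k-1)}\sqrt{m(\log n+2)}$ with probability at least $1-2/n$.

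For S2, note $\nabla^2(-\ell_{k-1}(\theta))=\sum_{t=1}^m H_t(\theta)$, where $H_t(\theta)$ is the Laplacian of the star on $S_t$ centred at $y_t$ with edge weight $p_{y_t,v}(\theta)(1-p_{y_t,v}(\theta))$ on $\{y_t,v\}$. On $\Theta$ each such weight is at least $\underline p:=e^{-2b}/4$ (since $\theta_{y_t}-\theta_v\in[-2b,2b]$ forces $p_{y_t,v}(1-p_{y_t,v})\ge e^{2b}/(1+e^{2b})^2$), so $\nabla^2(-\ell_{k-1}(\theta))\succeq\underline p\,\widetilde L$ \emph{uniformly in $\theta$}, where $\widetilde L:=\sum_t L^{(y_t)}_{S_t}$ is the (random) Laplacian of the unit-weight multigraph of deduced pairs. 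Unlike the pair-comparison case of Theorem~\ref{thm:mle} or the Luce full-likelihood case of Theorem~\ref{thm:full}, $\widetilde L$ genuinely depends on the random winners---a pathological choice sequence could omit a pair $\{i,j\}$ from every comparison containing it---so no deterministic lower bound on $\lambda_2$ is available and $\widetilde L$ must be controlled by concentration. Taking expectations over $y_t$, the edge $\{i,j\}$ receives expected weight $p_{i,S_t}(\vec\theta^\star)+p_{j,S_t}(\vec\theta^\star)\ge 2e^{-2b}/k$ from each comparison containing it, so $\E[\widetilde L]\succeq(2e^{-2b}/k)(m/n)L_{\vec M}$ and, by monotonicity of the second-smallest eigenvalue of a Laplacian under the Loewner order, $\lambda_2(\E[\widetilde L])\ge(2e^{-2b}/k)(m/n)\lambda_2(L_{\vec M})$. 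Projecting onto $\vec 1^\perp$ via an $n\times(n-1)$ matrix $\vec U$ with orthonormal columns spanning $\vec 1^\perp$, the summands $\vec U^\top L^{(y_t)}_{S_t}\vec U$ are independent, positive semidefinite, and of spectral norm at most $k$ (the star on $k$ vertices), and $\lambda_1(\vec U^\top\E[\widetilde L]\vec U)=\lambda_2(\E[\widetilde L])$; the matrix Chernoff bound (Lemma~\ref{cor:matrix}) with $\epsilon=\tfrac12$ then yields $\lambda_2(\widetilde L)\ge\tfrac12\lambda_2(\E[\widetilde L])$ with probability at least $1-1/n$, provided $\lambda_2(L_{\vec M})$ is large enough---which is exactly the hypothesis $\lambda_2(L_{\vec M})\ge 128(k-1)^2e^{2b}n\log n/m$. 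Consequently $\min_{\theta\in\Theta}\lambda_2(\nabla^2(-\ell_{k-1}(\theta)))\ge\underline p\cdot\tfrac12\lambda_2(\E[\widetilde L])=:\beta$, with $\beta=\Theta\!\big(e^{-\Theta(b)}(m/(kn))\,\lambda_2(L_{\vec M})\big)$.

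Combining S1 and S2 through Lemma~\ref{lem:mle-taylor}, on the intersection of the two good events (total failure probability at most $2/n+1/n=3/n$) we get $\mse(\widehat\theta_{k-1},\theta^\star)\le 4\alpha^2/(n\beta^2)=D^2\,n(\log n+2)/(\lambda_2(L_{\vec M})^2 m)$ after collecting the numerical constants into $D$, which is the assertion. The main obstacle, and the reason this proof is not a verbatim copy of that of Theorem~\ref{thm:full}, is Step S2: because $\mathsf{ALL}$ breaking records only the pairs incident to each random winner, the sparsity pattern of the pseudo-likelihood Hessian is itself random, so one must first pass to the deterministic envelope $\nabla^2(-\ell_{k-1}(\theta))\succeq\underline p\,\widetilde L$ and then apply a matrix concentration inequality to $\widetilde L$ restricted to $\vec 1^\perp$---this is precisely what forces the extra connectivity hypothesis on $\lambda_2(L_{\vec M})$ and the slightly weaker $1-3/n$ confidence compared with Theorem~\ref{thm:mle}. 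A secondary point that needs care is the gradient-unbiasedness identity in S1, which relies on the multiplicative form of the Luce choice probabilities and would fail for a generic Thurstone model, consistent with the theorem being stated only for the Luce case.
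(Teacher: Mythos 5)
Your proposal follows the same overall route as the paper's proof in Appendix~\ref{sec:proof-break-1}: Lemma~\ref{lem:mle-taylor} applied to $-\ell_{k-1}$, the vector Azuma--Hoeffding bound for the gradient (including the same Luce-specific cancellation identity $p_{i,S_t}(\theta^\star)p_{v,i}(\theta^\star)=p_{v,S_t}(\theta^\star)p_{i,v}(\theta^\star)$ that makes the per-observation gradient mean zero, and the same norm bound $\sqrt{k(k-1)}$), and the matrix Chernoff bound for the Hessian after projecting onto $\vec{1}^\perp$. The one place you genuinely deviate is in organizing step {\bf S2}: you first pass to the deterministic envelope $\nabla^2(-\ell_{k-1}(\theta))\succeq \underline{p}\,\widetilde{L}$ with $\underline{p}=e^{-2b}/4$ and a $\theta$-independent random incidence Laplacian $\widetilde{L}$, and only then concentrate $\widetilde{L}$. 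The paper instead applies matrix Chernoff directly to $\nabla^2(-\ell_{k-1}(\theta))$ at a fixed $\theta$ (computing $\E[\partial^2(\cdot)/\partial\theta_i\partial\theta_j]=-p_{i,j}(\theta)(1-p_{i,j}(\theta))(p_{i,S_t}(\theta)+p_{j,S_t}(\theta))\le -1/(2ke^{2b})$) and then asserts the bound for $\min_{\theta\in\Theta}$; your decoupling handles that uniformity over $\theta$ explicitly and is the cleaner way to justify the $\min_\theta$ in Lemma~\ref{lem:break1lambda_2}, so this is a real (if modest) improvement in rigor.

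The price of the decoupling is a constant: you lower-bound the edge weight over worst-case $\theta$ (factor $e^{-2b}/4$) and the winner-incidence probability at $\theta^\star$ (factor $2e^{-2b}/k$) separately, giving $\beta\gtrsim e^{-4b}(m/(4kn))\lambda_2(L_{\vec{M}})$, whereas the paper's joint evaluation yields $e^{-2b}$ in place of $e^{-4b}$. Tracking this through $4\alpha^2/(n\beta^2)$, your argument delivers $D^2=\Theta(k^3(k-1)e^{8b})$ rather than the stated $D^2=512\,k(k-1)^3e^{4b}$ --- the same $\Theta(k^4)$ scaling and the same dependence on $n$, $m$, and $\lambda_2(L_{\vec{M}})$, but a strictly larger constant in $b$. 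So as written you prove the theorem with a weaker constant $D\propto e^{4b}$; everything else (the sufficiency of the hypothesis $\lambda_2(L_{\vec{M}})\ge 128(k-1)^2e^{2b}n\log(n)/m$ for the Chernoff step, the $2/n+1/n$ failure-probability accounting, and the spectral-norm bound $k$ on the unit-weight star Laplacian versus the paper's $(k-1)/2$ for the $p(1-p)$-weighted one) checks out.
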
 

The proof of Theorem~\ref{thm:break-1} is given in Appendix~\ref{sec:proof-break-1}.

The mean squared error upper bound in Theorem~\ref{thm:break-1} implies that the given parameter estimator is consistent. For any fixed size of a comparison set, the bound in Theorem~\ref{thm:break-1} is equal to that in Theorem~\ref{thm:full} up to a constant factor. Both bounds have the same scaling with parameter $k$.

\label{sec:rankB}
\paragraph{Rank-breaking method $\mathsf{ONE}$} This rank-breaking method deduces $1$ pair comparison from a comparison set of cardinality $k$. From each comparison set $S_t$, this rank-breaking methods selects a pair that consists of the chosen item $y_t$ and an item $z_t$ selected uniformly at random from the set of non-chosen items $S_t\setminus \{y_t\}$. 

For the Luce choice model, the pseudo log-likelihood function is given by (\ref{equ:pseudolik1}), which can be written in a more explicit form as follows
$$
\ell_1(\theta) = \sum_{t=1}^m \log\left(\frac{e^{\theta_{y_t}}}{e^{\theta_{y_t}} + e^{\theta_{z_t}}}\right).
$$
We consider the maximum pseudo log-likelihood estimator $\widehat{\theta}_1 := \arg\max_{\theta\in \Theta} \ell_1(\theta)$.

\begin{theorem} If $\lambda_2\left(L_\vec{M} \right) \ge 8k(k-1)e^{2b}n\log (n)/m$, then with probability at least $1-3/n$,
$$
\mse(\widehat{\theta}_1, \theta^\star) 
\le D^2\frac{n(\log(n)+2)}{\lambda_2(L_\vec{M})^2}\frac{1}{m}
$$
where $D = 4k(k-1) e^{2b}$.
\label{thm:break}
\end{theorem}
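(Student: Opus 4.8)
The plan is to apply Lemma~\ref{lem:mle-taylor} with $g=-\ell_1$ and to follow the two-step template {\bf S1}--{\bf S2} used in the proof of Theorem~\ref{thm:mle}; the one new feature is that the deduced pair $\{y_t,z_t\}$ is itself random. Writing $p_2(x)=1/(1+e^{-x})$ for the Bradley--Terry choice function, we have $\ell_1(\theta)=\sum_{t=1}^m\log p_2(\theta_{y_t}-\theta_{z_t})$, which formally is the Bradley--Terry log-likelihood of the $m$ deduced pair comparisons. The property of the Luce model that makes this work is independence of irrelevant alternatives: conditional on the unordered deduced pair $\{y_t,z_t\}=\{i,j\}$, the winner $y_t$ equals $i$ with probability exactly $e^{\theta^\star_i}/(e^{\theta^\star_i}+e^{\theta^\star_j})=p_2(\theta^\star_i-\theta^\star_j)$. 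Hence, conditionally on the (random) sequence of deduced pairs, the winners are genuinely distributed according to the Bradley--Terry model at the true parameter, so the sign and mean computations of Section~\ref{sec:pairs} apply verbatim.

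\textbf{Step S1.} The gradient $\nabla(-\ell_1)(\theta^\star)=\sum_{t=1}^m\nabla(-\log p_2(\theta^\star_{y_t}-\theta^\star_{z_t}))$ is a sum of independent random vectors; each summand has only the two nonzero coordinates $y_t,z_t$, with values $\pm(1-p_2(\theta^\star_{y_t}-\theta^\star_{z_t}))$, hence squared norm at most $2$ since $0\le 1-p_2\le 1$; and by the conditional Bradley--Terry property together with evenness of $dp_2/dx$ (as in the proof of Lemma~\ref{lem:LT22}) each summand has zero mean, so the partial sums form a martingale. The vector Azuma--Hoeffding bound (Lemma~\ref{prop:azuma-hoeffding}) then gives $\|\nabla(-\ell_1)(\theta^\star)\|_2\le 2\sqrt{m(\log n+2)}$ with probability at least $1-2/n$.

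\textbf{Step S2.} For every $\theta\in\Theta$, the Hessian is $\nabla^2(-\ell_1)(\theta)=\sum_{t=1}^m p_2(\theta_{y_t}-\theta_{z_t})\bigl(1-p_2(\theta_{y_t}-\theta_{z_t})\bigr)L_{\{y_t,z_t\}}$, where $L_{\{i,j\}}$ is the unit single-edge Laplacian on the pair $\{i,j\}$; since $p_2(x)(1-p_2(x))\ge A$ on $[-2b,2b]$ with $A=\Theta(e^{-2b})$, we get $\nabla^2(-\ell_1)(\theta)\succeq A\,L_{\widetilde{\vec M}}$ uniformly in $\theta$, where $\widetilde{\vec M}$ is the random matrix of deduced-pair counts, $\widetilde m_{i,j}=\sum_t\Ind\{\{y_t,z_t\}=\{i,j\}\}$. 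Using $\theta^\star\in[-b,b]^n$, $\E[\widetilde m_{i,j}]=\sum_{t:\{i,j\}\subseteq S_t}\frac{e^{\theta^\star_i}+e^{\theta^\star_j}}{(k-1)\sum_{v\in S_t}e^{\theta^\star_v}}\ge\frac{2e^{-2b}}{k(k-1)}\,m_{i,j}(k)$, and since $m_{i,j}(k)=\frac{m}{n}m_{i,j}$ with $m_{i,j}$ the entry of $\vec M$, this yields $\E[L_{\widetilde{\vec M}}]\succeq\frac{2e^{-2b}}{k(k-1)}\cdot\frac{m}{n}\,L_{\vec M}$, hence $\lambda_2(\E[L_{\widetilde{\vec M}}])\ge\frac{2e^{-2b}}{k(k-1)}\cdot\frac{m}{n}\lambda_2(L_{\vec M})$. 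Projecting onto the orthogonal complement of $\vec 1$ and applying the matrix Chernoff bound (Lemma~\ref{cor:matrix}) to the independent positive semidefinite summands, each of spectral norm at most $2$, with $\epsilon=1/2$, the hypothesis $\lambda_2(L_{\vec M})\ge 8k(k-1)e^{2b}n\log n/m$ ensures that with probability at least $1-1/n$ we have $\lambda_2(L_{\widetilde{\vec M}})\ge\tfrac12\lambda_2(\E[L_{\widetilde{\vec M}}])$; on this event $\widetilde{\vec M}$ is connected, so (using that $X\succeq Y\succeq 0$ with $X\vec1=Y\vec1=\vec0$ implies $\lambda_2(X)\ge\lambda_2(Y)$) $\min_{\theta\in\Theta}\lambda_2(\nabla^2(-\ell_1)(\theta))\ge A\cdot\frac{e^{-2b}}{k(k-1)}\cdot\frac{m}{n}\lambda_2(L_{\vec M})$.

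Combining {\bf S1} and {\bf S2} through Lemma~\ref{lem:mle-taylor}, on the intersection of the two good events (whose complement has probability at most $3/n$) we obtain $\|\widehat\theta_1-\theta^\star\|_2\le 2\alpha/\beta$ with $\alpha=2\sqrt{m(\log n+2)}$ and $\beta$ equal to the lower bound from {\bf S2}; squaring and dividing by $n$ gives $\mse(\widehat\theta_1,\theta^\star)\le D^2\,n(\log n+2)/(\lambda_2(L_{\vec M})^2m)$ of the claimed form, with $D$ equal to $k(k-1)$ times an explicit constant depending on $b$ through $A$ and $\max_{|x|\le 2b}(1-p_2(x))$. The main obstacle --- the only genuine departure from the proof of Theorem~\ref{thm:mle} --- is {\bf S2}: the Hessian is now a sum of random matrices, so one must pass to the expected deduced-pair Laplacian, dominate it from below by a multiple of $L_{\vec M}$ using the boundedness of $\theta^\star$, and then recover concentration of its Fiedler value via the matrix Chernoff inequality; this concentration step is exactly what forces the extra lower-bound hypothesis on $\lambda_2(L_{\vec M})$ that is absent from Theorem~\ref{thm:mle}.
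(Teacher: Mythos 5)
Your proposal is correct and follows essentially the same route as the paper: Lemma~\ref{lem:mle-taylor} plus the vector Azuma--Hoeffding bound for the gradient and the matrix Chernoff bound for the Hessian, with the extra hypothesis on $\lambda_2(L_{\vec M})$ entering exactly where you say it does. Two small points of comparison. First, your zero-mean argument for the gradient via independence of irrelevant alternatives (conditioning on the unordered deduced pair and invoking the conditional Bradley--Terry law) is a clean alternative to the paper's direct computation, which verifies $\sum_{j\in S_t\setminus\{i\}}\bigl(-p_{i,S_t}(\theta^\star)p_{j,i}(\theta^\star)+p_{j,S_t}(\theta^\star)p_{i,j}(\theta^\star)\bigr)=0$ term by term; your factorization of the Hessian as a $\theta$-independent random Laplacian $L_{\widetilde{\vec M}}$ times a uniformly bounded scalar weight also makes the uniformity over $\theta\in\Theta$ of the eigenvalue bound more transparent than the paper's treatment. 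Second, your route is quantitatively looser in the constant: by lower-bounding $p_{i,j}(\theta)(1-p_{i,j}(\theta))\ge e^{-2b}/4$ and $\Pr[\{y_t,z_t\}=\{i,j\}]\ge 2e^{-2b}m_{i,j}(k)/(k(k-1))$ \emph{separately}, you pay a factor $e^{-2b}$ twice and end with $D=\Theta\bigl(k(k-1)e^{4b}\bigr)$, whereas the paper keeps the product together, $p_{i,j}(\theta)(1-p_{i,j}(\theta))\cdot\frac{1}{k-1}\bigl(p_{i,S_t}(\theta)+p_{j,S_t}(\theta)\bigr)=\frac{1}{k-1}\cdot\frac{1}{\sum_{v\in S_t}e^{\theta_v}}\cdot\frac{1}{e^{-\theta_i}+e^{-\theta_j}}\ge\frac{1}{2k(k-1)e^{2b}}$, so the exponentials partially cancel and only a single factor $e^{2b}$ survives, giving the stated $D=4k(k-1)e^{2b}$. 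This affects only the $b$-dependence of the constant, not the structure or validity of the argument.
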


The proof of Theorem~\ref{thm:break} is given in Appendix~\ref{sec:proof-break}.

It is noteworthy that the mean squared error upper bounds in Theorem~\ref{thm:break-1} and Theorem~\ref{thm:break} are equal up to a constant factor. Intuitively, one would expect that rank-breaking method $\mathsf{ALL}$ would yield a smaller mean squared error than rank-breaking method $\mathsf{ONE}$ because it uses more information from each observed choice. The reason why the two mean squared error upper bounds are equal up to a constant factor is as follows. When applying Lemma~\ref{lem:mle-taylor} we need to find an upper bound $\alpha$ for the norm of the gradient of the negative pseudo log-likelihood function and a lower bound $\beta$ for the second-smallest eigenvalue of the Hessian matrix of the negative pseudo log-likelihood function. In our proofs, for the case of Theorem~\ref{thm:break}, we obtained $\alpha$ and $\beta$ that scale with $k$ as $1$ and $1/k^2$, respectively. On the other hand, for the case of Theorem~\ref{thm:break-1}, we obtained $\alpha$ and $\beta$ that scale with $k$ as $k$ and $1/k$, respectively. For both cases, it follows that the ratio $\alpha/\beta$ scales as $k^2$.

\section{Binary Classification}
\label{sec:classy}
We now consider a Thurstone choice model \GT\ where the strength parameter vector $\theta$ takes value in $\Theta = \{-b,b\}^n$, for a parameter $b > 0$. Here each individual strength parameter takes either a low or a high value. We say that each item is either of a low or a high class. We consider a binary classification problem, where the goal is to correctly classify all items with a prescribed probability of classification error. Let $N_1$ be the set of high class items and $N_2$ be the set of low class items. We shall consider the case when the total number of items is even and the number of high class items is equal to the number of low class items. 

We consider a simple classification algorithm that uses \emph{point scores} defined as follows: each item is associated with a point score equal to the number of comparison sets in which the given item is the chosen item. The algorithm outputs a classification of items with $\widehat N_1$ and $\widehat N_2$ denoting the sets of items classified to be high class or low class, respectively. The algorithm follows the following three steps: (a) for each item compute its point score, (b) sort the items in decreasing order of point scores, and (c) let $\widehat{N}_1$ contain $n/2$ items with highest point scores and $\widehat{N}_2$ contain remaining items. We refer to this algorithm as a \emph{point score ranking method}.

\begin{theorem} Suppose that $b \leq 4/(k^2 \partial p_k(\vec{0})/\partial x_1)$ and
\begin{equation}
b \max_{\vec{x}\in [-2b,2b]^{k-1}} \|\nabla^2 p_k (\vec{x}) \|_2 \le \frac{\partial p_k ({\bm 0})}{\partial x_1}.
\label{equ:bcond}
\end{equation}
Then, for every $\delta \in (0,1]$, if
\begin{equation}
m \ge 64 \frac{1}{b^2}\, \left(1-\frac{1}{k}\right)\gamma_{F,k}\, n(\log(n) + \log(1/\delta))
\label{equ:classifyub}
\end{equation}
then, the point score ranking method correctly classifies all items with probability at least $1-\delta$.
\label{thm:clustering-example}
\end{theorem}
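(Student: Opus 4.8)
The plan is to run a first-moment-gap-plus-concentration argument on the point scores. Write $s_i=\sum_{t=1}^m\Ind\{y_t=i\}$ for the point score of item $i$; since the method places the $n/2$ largest scores into $\widehat{N}_1$, it classifies every item correctly as soon as $\min_{i\in N_1}s_i>\max_{j\in N_2}s_j$. Because the comparison sets are sampled uniformly at random among the $k$-subsets of $N$ (so the input model is exchangeable over items, as in the setting of Corollary~\ref{cor:ksize}), all high-class items share a common expected score $\mu_1=\E[s_i]$ and all low-class items a common expected score $\mu_2=\E[s_j]$, and I would show that with probability at least $1-\delta$ every $s_i$ stays within $\Delta/2$ of its own mean, where $\Delta:=\mu_1-\mu_2>0$; on that event every high-class score exceeds $\mu_1-\Delta/2=\mu_2+\Delta/2$ and hence every low-class score. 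So the two ingredients needed are (i) a lower bound on the expected-score gap $\Delta$, and (ii) a concentration bound for a single $s_i$.

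For (i), I would condition on $i\in S$ for a random comparison set $S$. Then the coordinates of $\vec{x}_{i,S}(\theta^\star)$ are $0$ on the high-class co-members of $S$ and $2b$ on the low-class co-members, so $\vec{x}_{i,S}(\theta^\star)\succeq\vec{0}$ when $i\in N_1$ and $\preceq\vec{0}$ when $i\in N_2$; monotonicity of $p_k$ then already gives $\mu_1\ge m/n\ge\mu_2$. To turn this into a quantitative gap I would Taylor-expand $p_k$ about $\vec{0}$, using $p_k(\vec{0})=1/k$ and $\nabla p_k(\vec{0})=(\partial p_k(\vec{0})/\partial x_1)\vec{1}$, and control the quadratic remainder over $[-2b,2b]^{k-1}$ by hypothesis~(\ref{equ:bcond}); averaging the first-order term over the number of low-class co-members of $S$ — which is hypergeometric with mean $(k-1)/2+O(k/n)$ — gives $\E[p_{i,S}(\theta^\star)\mid i\in S]\ge 1/k+c\,b\,(\partial p_k(\vec{0})/\partial x_1)(k-1)$ for $i\in N_1$ and the reverse inequality for $i\in N_2$. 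Multiplying by $\Pr[i\in S]=k/n$ and by $m$ yields $\Delta\ge c'\,b\,(\partial p_k(\vec{0})/\partial x_1)\,k(k-1)\,m/n$. I expect this to be the main obstacle: the remainder bound supplied by~(\ref{equ:bcond}) is a priori of the same order as the first-order term, so extracting a gap of the asserted order — with the constant needed to hit~(\ref{equ:classifyub}) — requires exploiting the sign pattern of $\vec{x}_{i,S}(\theta^\star)$ and the averaging over co-members rather than a crude triangle inequality, and the second hypothesis $b\le 4/(k^2\,\partial p_k(\vec{0})/\partial x_1)$ enters here to keep $\Delta=O(m/n)$ and $\mu_1=O(m/n)$.

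For (ii), note that $s_i$ is a sum of $m$ independent Bernoulli variables with success probabilities of order $1/n$, so $\mathrm{Var}(s_i)\le\mu_i=O(m/n)$. A plain Hoeffding bound gives only $\exp(-\Theta(\Delta^2/m))$, which would force $m=\Omega(n^2)$; instead I would invoke Bernstein's inequality, and since $\Delta/2=O(m/n)$ puts the deviation in the sub-Gaussian regime this yields $\Pr[|s_i-\mu_i|\ge\Delta/2]\le 2\exp(-c''\,n\Delta^2/m)$. A union bound over the $n$ items bounds the failure probability by $2n\exp(-c''\,n\Delta^2/m)$. Finally, substituting the bound on $\Delta$ from step (i) and using $1/\gamma_{F,k}=k^3(k-1)(\partial p_k(\vec{0})/\partial x_1)^2$, equivalently $(\partial p_k(\vec{0})/\partial x_1)^2k^2(k-1)^2=(1-1/k)/\gamma_{F,k}$, the exponent is of order $b^2 m/((1-1/k)\gamma_{F,k}\,n)$, so the failure probability is at most $\delta$ as soon as $m\ge 64\,b^{-2}(1-1/k)\gamma_{F,k}\,n(\log n+\log(1/\delta))$, which is exactly the hypothesis~(\ref{equ:classifyub}). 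This completes the argument modulo the constant bookkeeping in steps (i) and (ii).
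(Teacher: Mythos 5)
Your proposal is essentially the paper's proof: a per-observation gap in the choice probabilities between the two classes, obtained by a Taylor expansion of $p_k$ about $\vec{0}$ with the remainder controlled by (\ref{equ:bcond}), followed by a Poisson-regime concentration bound on the point scores (each a sum of $m$ Bernoullis with mean $\Theta(1/n)$) and a union bound over items. Your observation that plain Hoeffding is too weak and a Bernstein-type bound is needed is exactly right; the paper uses its own Chernoff bound for Bernoulli sums (Lemma~\ref{lem:chernoff}), which gives the same $\exp(-\Theta(n\Delta^2/m))$ rate, and it separates the classes at the fixed threshold $m/n$ rather than at each score's own mean, which is an immaterial difference.

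The one step you flag but do not carry out -- beating the second-order remainder, which under (\ref{equ:bcond}) is a priori of the same order as the first-order term -- is where the two write-ups differ in mechanics. The paper does not average $p_{i,S}(\theta^\star)-1/k$ over the hypergeometric co-membership as you propose; instead it pairs a high-class item $i$ with a low-class item $j$ and bounds $D_{i,j}(A)=p_k(\vec{b}-\theta_A)-p_k(-\vec{b}-\theta_A)$ directly, so that the first-order term of the \emph{difference} is the deterministic quantity $2(k-1)b\,\partial p_k(\vec{0})/\partial x_1$ (independent of the random composition of the rest of the comparison set), with total remainder at most $4(k-1)b^2\max_{\vec{x}}\|\nabla^2 p_k(\vec{x})\|_2$; symmetry of $\Pr[y_t=v]$ within each class and $\sum_v\Pr[y_t=v]=1$ then convert the between-class gap into a one-sided gap of $\tfrac{k^2b}{4n}\,\partial p_k(\vec{0})/\partial x_1$ from the threshold $1/n$. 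To make the remainder strictly smaller than the first-order term, the paper in fact uses $\max_{\vec{x}}\|\nabla^2 p_k(\vec{x})\|_2\le\frac{1}{4b}\,\partial p_k(\vec{0})/\partial x_1$, a factor-of-$4$ strengthening of (\ref{equ:bcond}) as literally stated -- so your worry that the remainder bound "as supplied" is borderline is legitimate and applies to the paper's own argument as well; your route would close in the same way under the same strengthened reading. The only other bookkeeping point is that your gap scales as $k(k-1)$ rather than the paper's $k^2$, which changes the final constant by at most a factor of $(1-1/k)^{-2}\le 4$ and does not affect the form of (\ref{equ:classifyub}).
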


The proof of Theorem~\ref{thm:clustering-example} is provided in Appendix~\ref{sec:clustering-example}. 

The sufficient condition in (\ref{equ:classifyub}) for the point score ranking method to correctly classify all the items with probability at least $1-\delta$ is shown to be necessary up to a constant-factor for any classification algorithm, which is given in the following theorem. 

\begin{theorem} Suppose that $b \le 1/(6 k^2\partial p_k(\vec{0})/\partial x_1)$ and that condition (\ref{equ:bcond}) holds. Then, for every even $n \geq 16$ and $\delta \in (0,1/4]$, for any algorithm to correctly classify all the items with probability at least $1-\delta$, it is necessary that the following condition holds
$$
m \geq \frac{1}{62}\frac{1}{b^2}\, \left(1-\frac{1}{k}\right)\gamma_{F,k}\, n(\log(n) + \log(1/\delta)).
$$
\label{thm:clustering-low}
\end{theorem}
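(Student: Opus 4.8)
I would establish the bound by a change-of-measure argument, in the setting of Theorem~\ref{thm:clustering-example} (all comparison sets of cardinality $k$, drawn independently and uniformly at random from the $k$-subsets of $N$; the argument is the same if they form a fixed unbiased family). Fix a balanced ground truth $\theta^\star\in\{-b,b\}^n$ with classes $N_1,N_2$, and for $i\in N_1$, $j\in N_2$ let $\theta^{(i,j)}\in\{-b,b\}^n$ be the balanced configuration obtained from $\theta^\star$ by interchanging the classes of $i$ and $j$. Since $\log n+\log(1/\delta)\le 2\max\{\log n,\log(1/\delta)\}$, it suffices to prove the lower bound separately with $\log n+\log(1/\delta)$ replaced by $\log n$ and by $\log(1/\delta)$ and then take the larger; these two terms will come from a Fano argument and a two-point (Le Cam) argument respectively. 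In both, the reduction is the same: if an algorithm classifies every item correctly with probability at least $1-\delta$ on every balanced configuration, then (for Fano) under the uniform prior on any finite family $\Pi$ of balanced configurations the probability of failing to recover the induced partition is at most $\delta$, and (for the two-point bound) for each pair $(i,j)$ the induced test ``is $i\in\widehat N_1$?'' separates $P_{\theta^\star}$ from $P_{\theta^{(i,j)}}$ with type-I plus type-II error at most $2\delta$.

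The heart of the argument --- and the step I expect to be the most technical --- is a uniform upper bound on the Kullback--Leibler divergence between the laws of the data under two configurations that differ by one or two class swaps. Since the comparison sets are i.i.d.\ and independent of $\theta$, $\mathrm{KL}(P_\theta\|P_{\theta'})=m\,\E_{S}\!\big[\mathrm{KL}\big((p_{\cdot,S}(\theta))\,\|\,(p_{\cdot,S}(\theta'))\big)\big]$, and the inner divergence vanishes unless $S$ meets the symmetric difference $\{i,j\}$ of the two partitions, which happens with probability at most $2k/n$. For such an $S$, condition~(\ref{equ:bcond}) together with the hypothesis $b\le 1/(6k^2\,\partial p_k(\vec{0})/\partial x_1)$ lets me Taylor-expand $p_{v,S}(\cdot)$ about $\vec{0}$ with the linear term dominating the quadratic remainder and with every $p_{v,S}$ bounded below by a constant multiple of $1/k$; bounding $\mathrm{KL}$ by $\chi^2$ and evaluating the resulting sum of squared partial derivatives via the identity $\|\nabla\log p_{y,S}(\vec{0})\|_2^2=k(k-1)w^\star(k)=1/\gamma_{F,k}$ (stated before Theorem~\ref{thm:karyub}) shows that each such comparison set contributes at most $c\,b^2/(k\,\gamma_{F,k})$. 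Multiplying by $m$ and the hitting probability $2k/n$ yields
$$
\mathrm{KL}(P_\theta\|P_{\theta'})\ \le\ \frac{c''\,b^2\,m}{n\,\gamma_{F,k}}
$$
for an absolute constant $c''$, valid for $\theta,\theta'$ differing by one or two swaps (two swaps costing only an extra constant factor). The bookkeeping requiring care is the separation of comparison sets containing exactly one of $i,j$ from those containing both, and keeping the constants explicit enough to reach the stated $\tfrac{1}{62}$.

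With this estimate the two pieces are routine. For the $\log(1/\delta)$ term, the Bretagnolle--Huber inequality applied to the test above gives $2\delta\ge\tfrac12\exp\!\big(-\mathrm{KL}(P_{\theta^\star}\|P_{\theta^{(i,j)}})\big)$, hence $\mathrm{KL}(P_{\theta^\star}\|P_{\theta^{(i,j)}})\ge\log(1/(4\delta))$, which with the displayed bound forces $m\ge \gamma_{F,k}\,n\log(1/(4\delta))/(c''\,b^2)$. For the $\log n$ term, take $\Pi=\{\theta^\star\}\cup\{\theta^{(i,j)}:i\in N_1,\ j\in N_2\}$, so $|\Pi|=1+(n/2)^2$, and apply Fano's inequality: the probability of not recovering the partition is at least $1-(I+\log 2)/\log|\Pi|$ with $I\le\max_{\theta,\theta'\in\Pi}\mathrm{KL}(P_\theta\|P_{\theta'})$; requiring this to be at most $\delta\le1/4$ (using $n\ge16$, which keeps $(1-\delta)\log|\Pi|-\log2$ at least $\tfrac12\log n$, say) forces $\max_{\theta,\theta'}\mathrm{KL}(P_\theta\|P_{\theta'})\ge\tfrac12\log n$, i.e.\ $m\ge \gamma_{F,k}\,n\log n/(2c''\,b^2)$. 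Taking the larger of the two conclusions gives $m=\Omega\big(\gamma_{F,k}\,n(\log n+\log(1/\delta))/b^2\big)$; since $1-\tfrac1k\le1$ this is at least $\tfrac{1}{62}\,b^{-2}(1-\tfrac1k)\gamma_{F,k}\,n(\log n+\log(1/\delta))$ once the constants are tracked, which is the claim.
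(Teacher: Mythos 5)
Your proposal is sound and proves a lower bound of the right form, but it takes a genuinely different route from the paper. The paper does not use Fano or a Le Cam two-point reduction. Instead it fixes a single pair of configurations $\theta,\theta'$ differing by one swap and runs a direct change-of-measure argument on the log-likelihood ratio $L(\vec{S},\vec{y})$: by exchangeability, an algorithm that succeeds with probability $1-\delta$ under $\theta$ outputs the \emph{specific} wrong partition induced by $\theta'$ with probability at most $(2/n)^2\delta$, and the change of measure then forces $\Pr_{\theta'}[L\ge\log(n/\delta)]\ge 1/2$; a mean \emph{and variance} bound on $L$ under $\theta'$ (your KL estimate is exactly their bound on $\E_{\theta'}[L]$, and they additionally control $\sigma^2_{\theta'}[L]$) combined with Chebyshev then yields the claim. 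This one-shot argument produces the threshold $\log(n/\delta)=\log n+\log(1/\delta)$ directly, whereas your decomposition pays a factor $2$ for replacing the sum by a max, gets only $\log(1/(4\delta))$ from Bretagnolle--Huber, and needs the two-swap KL for Fano's mutual-information bound; so while your argument correctly establishes $m=\Omega\big(b^{-2}(1-1/k)\gamma_{F,k}\,n(\log n+\log(1/\delta))\big)$, your closing claim that the constants "work out" to the stated $\tfrac{1}{62}$ is optimistic and is the one place where your write-up would need real verification (or acceptance of a smaller constant). The central technical ingredient --- the per-observation divergence bound of order $b^2/(k\gamma_{F,k})$ for comparison sets hitting the swapped items, obtained from the Taylor expansion under condition~(\ref{equ:bcond}) and the lower bound $p_{i,S}\ge 1/(2k)$ --- coincides with the paper's Step~2, so the heart of your proof is correct; what the paper's route buys is sharper constants and no need for a packing/Fano construction, while yours is more modular and standard.
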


The proof of Theorem~\ref{thm:clustering-low} is given in Appendix~\ref{sec:clustering-low}. In the proof, we use the statistical difference between the case when all the items are correctly classified and the case that an item is incorrectly classified. This proof strategy is motivated by that in \cite{yun2014community} where it was used to analyze the classification error of the stochastic block model.

\section{Discussion of Results}
\label{sec:disc}

In this section, we discuss how the number of observations needed to attain a prescribed parameter estimation error depends on the cardinality of comparison sets. 

In Section~\ref{sec:mse}, we found that for a priori unbiased input comparisons, where each comparison set is of cardinality $k$ and is drawn uniformly at random from the set of all items, the number of observations needed for the mean squared error to be within a prescribed tolerance is of the order $\gamma_{F,k}$, defined by (\ref{equ:gamma}). In Section~\ref{sec:classy}, we found that this also so to ensure that the probability of classification error is within a prescribed tolerance. 

\begin{table}[h]
\caption{The values of parameters for our examples of \GT.}
\begin{center}
\begin{tabular}{c|cc}
$F$ & $\frac{\partial p_k(\vec{0})}{\partial x_1}$ & $\gamma_{F,k}$\\\hline
Gaussian & $O(\frac{1}{k^{2 - \epsilon}})$ & $\Omega(\frac{1}{k^{2\epsilon}})$\\
Double-exponential & $\frac{1}{\beta k^2}$ & $\beta^2\frac{k}{k-1}$\\
Laplace & $\frac{1-1/2^{k-1}}{\beta k(k-1)}$ & $\beta^2\frac{k-1}{k(1-1/2^{k-1})^2}$\\
Uniform & $\frac{1}{2a(k-1)}$ & $4a^2\frac{k-1}{k^3}$
\end{tabular}
\end{center}
\label{tab:gamma}
\end{table}

In Table~\ref{tab:gamma}, we show the values of the parameter $\gamma_{F,k}$ for several special instances of the Thurstone choice model, along with the values of $\partial p_k(\vec{0})/\partial x_1$. From the expressions in Table~\ref{tab:gamma}, we observe that for all the cases but the case of uniform distribution of noise, $\gamma_{F,k}$ decreases with the cardinality of comparison sets, but in a slow manner according to a diminishing returns relation. In particular, observe that for both double-exponential and Laplace distribution of noise, $\gamma_{F,k} = \Theta(1)$ and for Gaussian distribution of noise $\gamma_{F,k} = O(1/k^\epsilon)$. On the other hand, for uniform distribution of noise, $\gamma_{F,k} = \Theta(1/k^2)$. 

It is noteworthy that $\gamma_{F,k}$ satisfies the following bounds.

\begin{lemma} For any cumulative distribution function $F$ with density function $f$: 
\begin{enumerate}
\item If $f$ is even and continuously differentiable, then $\gamma_{F,k} = O(1)$. 
\item If $f$ is such that $f(x) \leq C$ for all $x\in \reals$, for a constant $C > 0$, then $\gamma_{F,k} = \Omega(1/k^2)$.
\end{enumerate}
\label{prop:gamma}
\end{lemma}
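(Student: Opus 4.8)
The plan is to work directly from the defining formula $1/\gamma_{F,k} = k^3(k-1)\left(\partial p_k(\vec{0})/\partial x_1\right)^2$ together with the integral representation $\partial p_k(\vec{0})/\partial x_1 = \int_\reals f(x)^2 F(x)^{k-2}\,dx$ from (\ref{equ:partp}). Both claims reduce to estimating the quantity $I_k := \int_\reals f(x)^2 F(x)^{k-2}\,dx$ as $k\to\infty$; specifically, $\gamma_{F,k}=O(1)$ is equivalent to $I_k = \Omega(1/k^2)$, and $\gamma_{F,k}=\Omega(1/k^2)$ is equivalent to $I_k = O(1/k)$.

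For part~2, suppose $f(x)\le C$ everywhere. Then $I_k \le C\int_\reals f(x) F(x)^{k-2}\,dx$. The remaining integral is exactly $\int_\reals f(x) F(x)^{k-2}\,dx = \int_0^1 u^{k-2}\,du = 1/(k-1)$ by the substitution $u=F(x)$ (valid since $F$ has a density, hence is continuous and the pushforward of $f\,dx$ under $F$ is the uniform measure on $[0,1]$). Thus $I_k \le C/(k-1) = O(1/k)$, which gives $1/\gamma_{F,k} = k^3(k-1) I_k^2 = O(k^4 \cdot 1/k^2) = O(k^2)$, i.e.\ $\gamma_{F,k} = \Omega(1/k^2)$.

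For part~1, assume $f$ is even and continuously differentiable. Evenness gives $F(0)=1/2$ and $f'(0)=0$ (the derivative of an even function vanishes at $0$), so near $0$ we have $F(x)=1/2+f(0)x+O(x^2)$ and $f(x)^2 = f(0)^2 + O(x^2)$. I would lower-bound $I_k$ by restricting the integral to a shrinking window $|x|\le c/k$ for a suitable constant $c$: on that window $F(x)\ge 1/2 + f(0)x - C'x^2 \ge \tfrac12 e^{-\kappa k |x|}$ for an appropriate $\kappa$ and $k$ large (using $\log(1+t)\ge t - t^2$ type bounds together with $F(x)\ge 1/2$ on $x\ge 0$; a symmetric argument handles $x<0$ via $F(-x)=1-F(x)$ — but note $F^{k-2}$ is tiny for $x<0$, so the bulk of the mass is at $x\gtrsim 0$). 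Hence $F(x)^{k-2}\ge 2^{-(k-2)}$... more carefully, I would instead write $F(x)^{k-2} = \exp((k-2)\log F(x))$ and note that on $0\le x\le c/k$, $\log F(x) \ge \log(1/2) + 2f(0)x + O(1/k^2)$, so $F(x)^{k-2}\ge 2^{-(k-2)} e^{2(k-2)f(0)x}(1+o(1))$. This still carries the problematic factor $2^{-(k-2)}$, which indicates the window-at-$0$ approach is wrong: the integrand $f(x)^2 F(x)^{k-2}$ is actually concentrated near the right end of the support of $F$ (where $F\approx 1$), not near $0$.

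So I would redo part~1 as follows: the integrand is largest where $F(x)$ is close to $1$. If $f$ has bounded support $[-a,a]$ this is immediate — near $x=a$, $f$ is continuous; if additionally $f(a)>0$ one gets $I_k = \Theta(1/k)$ directly, and if $f$ vanishes at the endpoint with a polynomial rate one gets $I_k = \Theta(k^{-\alpha})$ for some $\alpha<2$. For unbounded support, change variables by $u = F(x)$, so $x = F^{-1}(u)$ and $dx = du/f(F^{-1}(u))$, giving $I_k = \int_0^1 f(F^{-1}(u))\, u^{k-2}\,du$. Writing $g(u) := f(F^{-1}(u))$, this is a moment integral $\int_0^1 g(u) u^{k-2}\,du$, dominated by $u$ near $1$. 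Since $f$ is continuously differentiable and even with $\int f = 1$, one has $g(u)\to 0$ as $u\to 1$ but with a controlled rate: $g(u) = f(F^{-1}(u))$ and I would show $g(u) = \Omega(1-u)$ fails in general (e.g.\ Gaussian gives a near-exponential rate), so the honest statement is $\gamma_{F,k} = O(1)$ iff $I_k = \Omega(1/k^2)$ iff $g(u) = \Omega((1-u))$ near $u=1$... this is precisely where the continuous differentiability of $f$ must be used to rule out $I_k = o(1/k^2)$.

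\medskip

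\noindent\emph{Reassessment.} The clean path, and the one I would actually write, uses continuous differentiability of $f$ together with evenness to get a two-sided tail estimate and then applies a Laplace/Watson-lemma type argument to $I_k = \int_0^1 g(u)u^{k-2}\,du$. The key is: continuous differentiability of $f$ forces $\int_\reals f(x)^2\,dx < \infty$ and, crucially, forces the behaviour of $g(u)=f(F^{-1}(u))$ as $u\to 1^-$ to be no faster than a fixed polynomial decay in $(1-u)$ — concretely I would show $g(u)\ge c_0(1-u)$ for $u$ near $1$ by the mean value theorem: $1-u = 1-F(x) = \int_x^\infty f \le $ (something controlled by $f(x)$ and the derivative bound), hence $f(x) \gtrsim 1-F(x)$, i.e.\ $g(u)\gtrsim 1-u$. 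Then $I_k \ge c_0\int_0^1 (1-u)u^{k-2}\,du = c_0 \cdot \frac{1}{(k-1)k} = \Omega(1/k^2)$, giving $1/\gamma_{F,k} = k^3(k-1)I_k^2 = \Omega(k^4/k^4)=\Omega(1)$, i.e.\ $\gamma_{F,k} = O(1)$.

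\medskip

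\noindent\textbf{Summary of the plan.} Step~1: reduce both parts to estimating $I_k = \int_\reals f(x)^2 F(x)^{k-2}\,dx$, using $1/\gamma_{F,k} = k^3(k-1)I_k^2$. Step~2 (part~2): bound $f\le C$ and use $\int f F^{k-2} = 1/(k-1)$ to get $I_k = O(1/k)$, hence $\gamma_{F,k}=\Omega(1/k^2)$. Step~3 (part~1): change variables to $u=F(x)$ so $I_k = \int_0^1 f(F^{-1}(u)) u^{k-2}\,du$; use evenness ($F(0)=1/2$, $f'(0)=0$) and continuous differentiability to establish the endpoint lower bound $f(F^{-1}(u))\ge c_0(1-u)$ for $u$ near $1$ via the tail-mass inequality $1-F(x) \le $ (const)$\cdot f(x)$ (here continuous differentiability controls how fast $f$ can decay: a $C^1$ even density cannot have its tail mass outrun its value, up to constants); then $I_k \ge c_0 \int_0^1 (1-u)u^{k-2}\,du = \Theta(1/k^2)$, giving $\gamma_{F,k}=O(1)$. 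The main obstacle is Step~3, specifically turning "continuously differentiable" into the quantitative endpoint estimate $f(F^{-1}(u)) = \Omega(1-u)$ — this is the one place where a hypothesis on $f$ beyond integrability is genuinely needed, and getting the right such inequality (rather than a vacuous one) is the crux of the argument.
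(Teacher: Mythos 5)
Your Part~2 is correct and is exactly the paper's argument: bound $f\le C$, use $\int_\reals f(x)F(x)^{k-2}dx=1/(k-1)$, conclude $\partial p_k(\vec 0)/\partial x_1=O(1/k)$ and hence $\gamma_{F,k}=\Omega(1/k^2)$. Nothing to add there.

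Part~1 is where the gap is. The reduction to $I_k=\int_0^1 g(u)u^{k-2}du$ with $g(u)=f(F^{-1}(u))$, and the observation that $g(u)\ge c_0(1-u)$ near $u=1$ would give $I_k\ge c_0/(k(k-1))$ and hence $\gamma_{F,k}=O(1)$, are both fine. But the inequality $f(x)\ge c_0\,(1-F(x))$ near the top of the support does \emph{not} follow from $f$ being even and continuously differentiable, and the ``a $C^1$ density's tail mass cannot outrun its value'' heuristic is false: smoothness gives no quantitative relation between $f(x)$ and $\int_x^\infty f$. Two concrete failures. First, an even $C^1$ density can vanish at a sequence of points $x_n\to\infty$ while $1-F(x_n)>0$ (disjoint smooth bumps of mass $2^{-n}$ on $[n,n+1]$, symmetrized), so no pointwise constant $c_0$ exists. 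Second, and more damagingly, for \emph{any} even density with a polynomial tail $f(x)\sim c\,x^{-\alpha}$ one has $1-F(x)\sim \frac{c}{\alpha-1}x^{-(\alpha-1)}$, so $f(x)/(1-F(x))\to 0$; concretely $f(x)\propto(1+|x|)^{-4}$ (smoothed at the origin) gives $g(u)\asymp(1-u)^{4/3}$, $I_k\asymp k^{-7/3}$, and then $1/\gamma_{F,k}=k^3(k-1)I_k^2\asymp k^{-2/3}$, i.e.\ $\gamma_{F,k}\to\infty$. So the endpoint estimate you need is a tail-lightness condition, not a smoothness condition, and for heavy-tailed even $C^1$ densities on all of $\reals$ the very bound $I_k=\Omega(1/k^2)$ that your Step~3 targets is unattainable. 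Your Step~3 as written would therefore fail.

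The paper's proof takes a different route, and one tied to compact support: it invokes the integration-by-parts identity (\ref{equ:ch2}), stated for $F$ supported on $[-a,a]$, which writes $\partial p_k(\vec 0)/\partial x_1$ as $f(a)/(k-1)$ plus a term $B_{F,k}$ built from $-f'$ integrated against a power of $F$, and asserts that for even $f$ this second term is nonnegative and $\Omega(1/k^2)$. It is worth noting that in the compactly supported case your change-of-variables picture can be rescued with no smoothness at all: $\int_{1-\delta}^1 du/g(u)=|\{x: F(x)>1-\delta\}|\le 2a$, so Cauchy--Schwarz gives $\int_{1-\delta}^1 g(u)\,du\ge \delta^2/(2a)$ and hence $I_k\ge (1-\delta)^{k-2}\delta^2/(2a)=\Omega(1/k^2)$ at $\delta=1/k$. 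That averaged endpoint bound, rather than the pointwise inequality $g(u)\ge c_0(1-u)$, is the step your argument is missing.
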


We observe that both double-exponential distribution and Laplace distribution of noise are extremal in the sense that they achieve the upper bound $\gamma_{F,k}=O(1)$. On the other hand, uniform distribution of noise is extremal in the sense that it achieves the lower bound $\gamma_{F,k}=\Omega(1/k^2)$. 





\section{Experimental Results}
\label{sec:exp}
In this section, we present our experimental results using both simulations and real-world data. Our first goal is to provide experimental validation of the claim that the mean squared error can depend on the cardinality of comparison sets in different ways for different Thurstone choice models, which is suggested by our theory. Our second goal is to evaluate Fiedler value for different weighted-adjacency matrices observed in real-world data, which demonstrates that it can assume a wide range of values depending on the application scenario. 

\subsection{$\mse$ versus Cardinality of Comparison Sets}

We consider the following simulation experiment. We fix the number of items $n$ and the number of observations $m$. We then run experiments for different values of the cardinality of comparison sets $k$. For each given value of parameter $k$, we generate comparison sets as independent uniform random sets of cardinality $k$ from the set of all items. We then draw choices according to a Thurstone choice model \GT\ for the value of parameter vector $\theta^\star = \vec{0}$. For every fixed value of $k$, we run $100$ repetitions to estimate the mean squared error. We do this for the distribution of noise according to a double-exponential distribution (Bradley-Terry model) and according to a uniform distribution, both with unit variance.

Figure~\ref{fig:synt} shows the results for the case of $n = 10$ and $m = 100$. The results clearly demonstrate that the mean squared error exhibits qualitatively different decay with the cardinality of comparison sets for the two Thurstone choice models under consideration. Our theoretical results in Section~\ref{sec:kary} suggest that the mean squared error should decrease with the cardinality of comparison sets as $1/(1-1/k)$ for the double-exponential distribution, and as $1/k^2$ for the uniform distribution of noise. Observe that the latter two terms decrease with $k$ to a strictly positive value and to zero value, respectively. The empirical results in Figure~\ref{fig:synt} confirm these claims. 

\begin{figure}[t]
\centering
\vspace*{-1.85cm}
\includegraphics[width=0.49\textwidth]{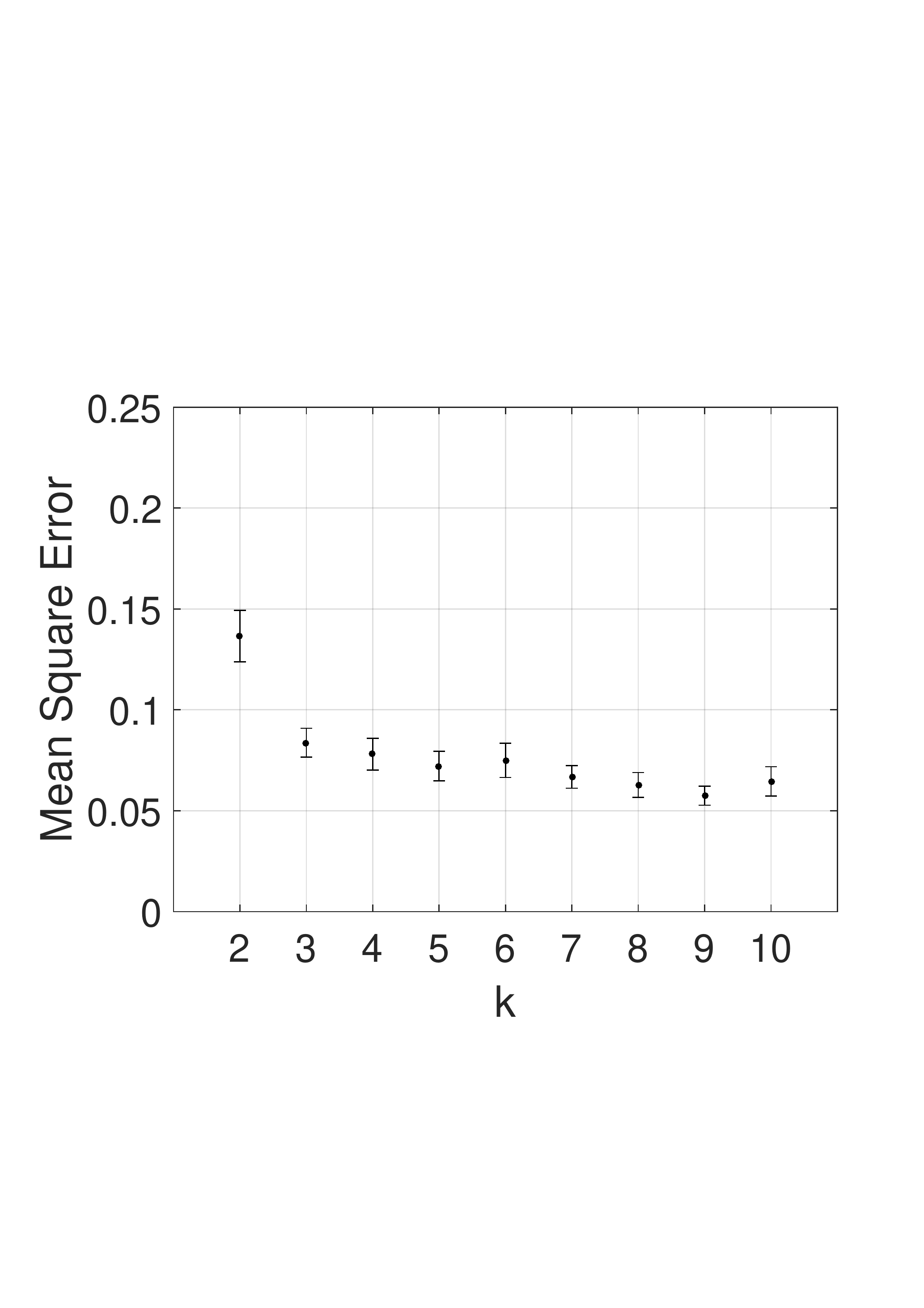}
\includegraphics[width=0.49\textwidth]{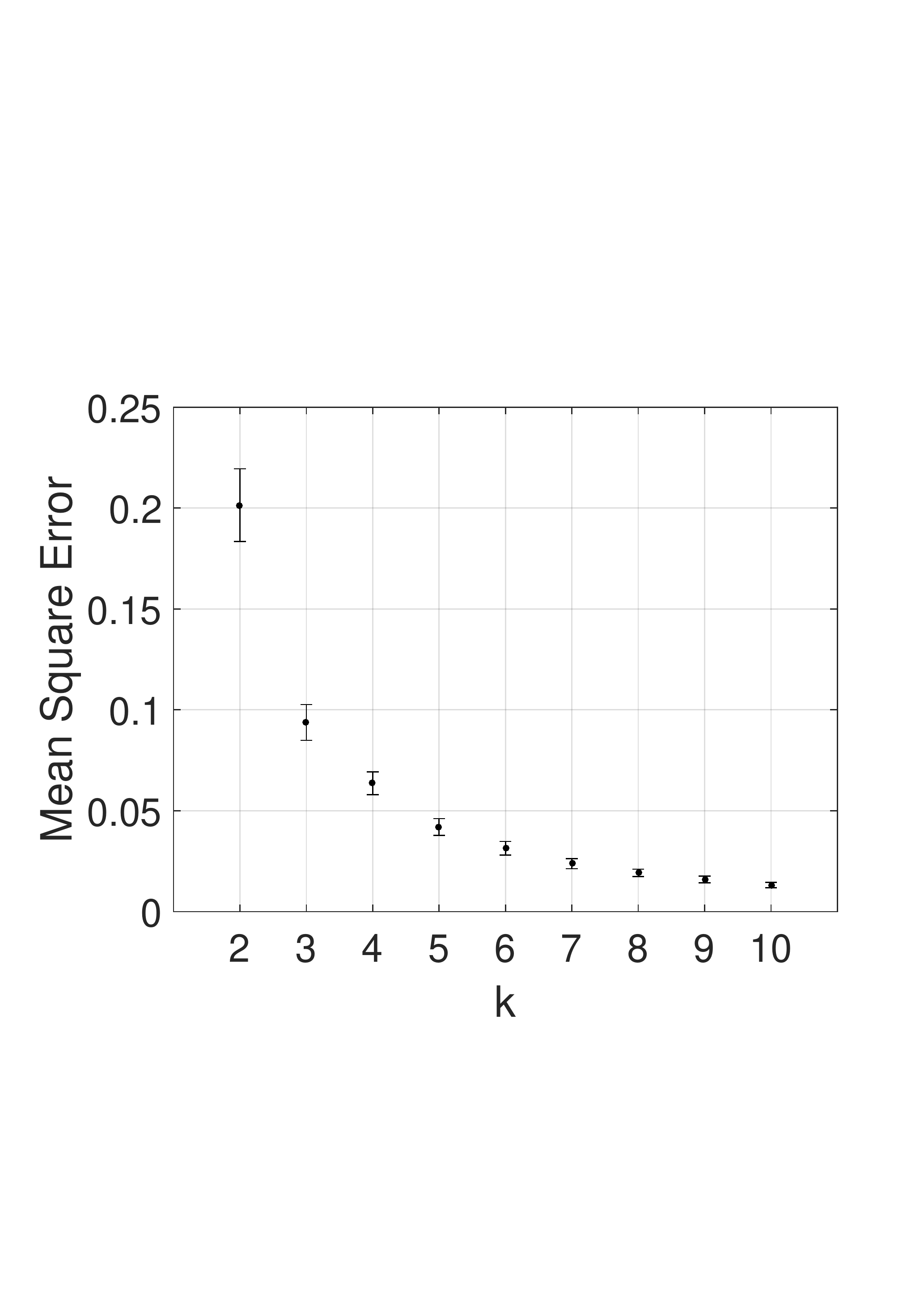}
\vspace*{-2cm}
\caption{Mean squared error for two different Thurstone choice models \GT: (left) double-exponential distribution of noise, and (right) uniform distribution of noise. The vertical bars denote 95\% confidence intervals. The results demonstrate two qualitatively different relations between the mean squared error and the cardinality of comparison sets, which confirm the theory.}
\label{fig:synt}
\end{figure}

\subsection{Fiedler Values of Weighted-Adjacency Matrices}

We found that Fiedler value of a weighted-adjacency matrix plays a key role in upper bounds on the mean squared error of parameter estimator in Section~\ref{sec:pairs} and Section~\ref{sec:kary}. Here we evaluate Fiedler value for different weighted-adjacency matrices of different schedules of comparisons. Throughout this section, we use the definition of a weighted-adjacency matrix in (\ref{equ:weightmatrix}) with the weight function $w(k) = 1/k^2$. Our first two examples are representative of schedules in sport competitions, which are typically carefully designed by sport associations and exhibit a large degree of regularity. Our second two examples are representative of comparisons that are induced by user choices in the context of online services, which exhibit much more irregularity. 

\paragraph{Sport competitions} We consider the fixtures of games for the season 2014-2015 for (i) football Barclays premier league and (ii) basketball NBA league. In the Barclays premier league, there are 20 teams, each team plays a home and an away game with each other team; thus there are 380 games in total. In the NBA league, there are 30 teams, 1,230 regular games, and 81 playoff games.\footnote{The NBA league consists of two conferences, each with three divisions, and the fixture of games has to obey constraints on the number of games played between teams from different divisions.} We evaluate Fiedler value of weighted-adjacency matrices defined for first $m$ matches of each season; see Figure~\ref{fig:premier-nba}.


For the Barclays premier league dataset, at the end of the season, the Fiedler value of the weighted-adjacency matrix is of value $n/[2(n-1)] \approx 1/2$. The schedule of matches is such that at the middle of the season, each team played against each other team exactly once, at which point the Fiedler value is $n/[4(n-1)] \approx 1/4$. The Fiedler value is of a strictly positive value after the first round of matches. For most part of the season, its value is near to $1/4$ and it grows to the highest value of approximately $1/2$ in the last round of the matches.

For the NBA league dataset, at the end of the season, the Fiedler value of the weighted-adjacency matrix is approximately $0.375$. It grows more slowly with the number of games played than for the Barclays premier league; this is intuitive as the schedule of games is more irregular, with each team not playing against each other team the same number of times.

\begin{figure}[t]
\centering
\vspace*{-1.85cm}
\includegraphics[width=0.49\textwidth]{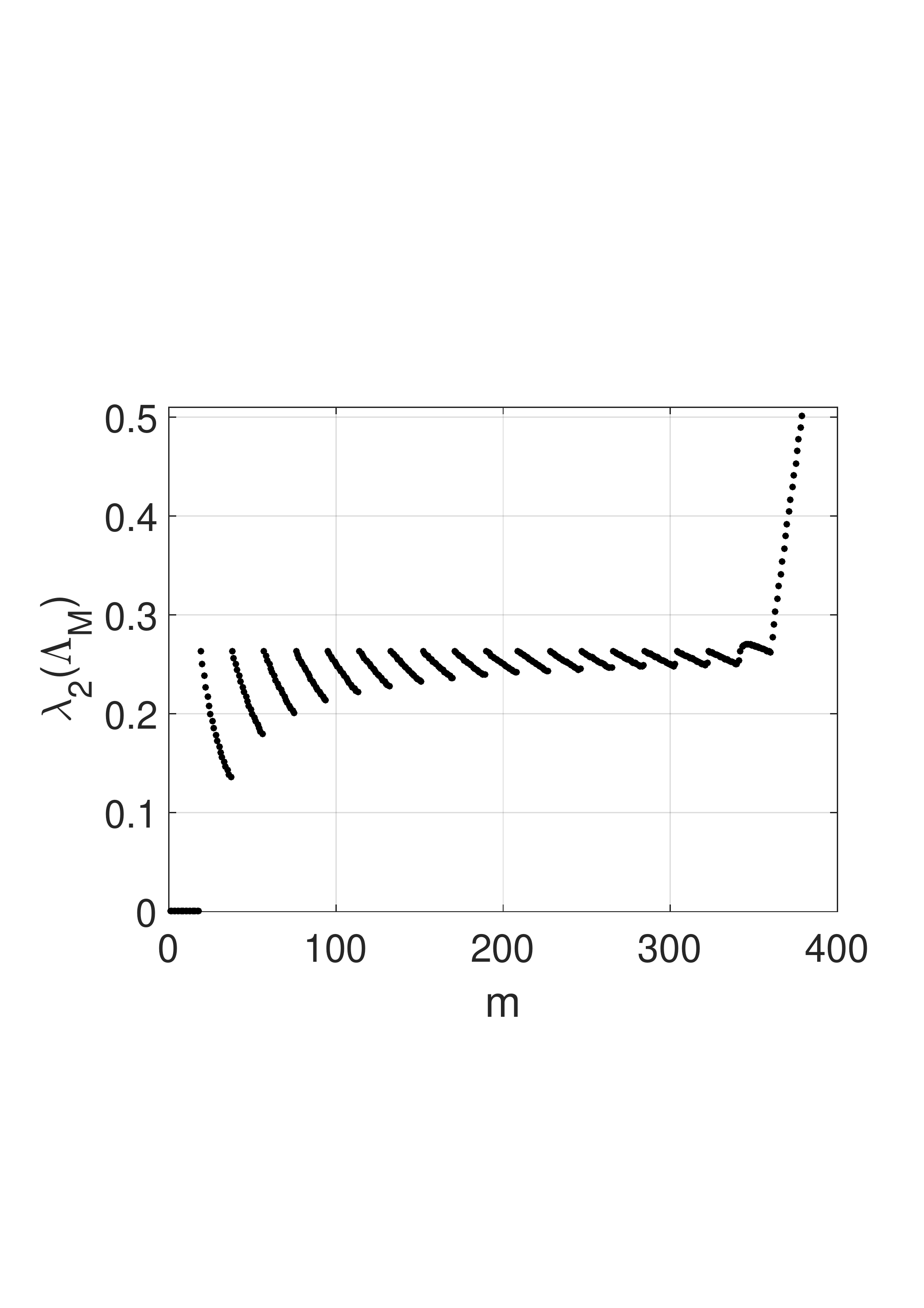}\hspace*{-0.2cm}
\includegraphics[width=0.49\textwidth]{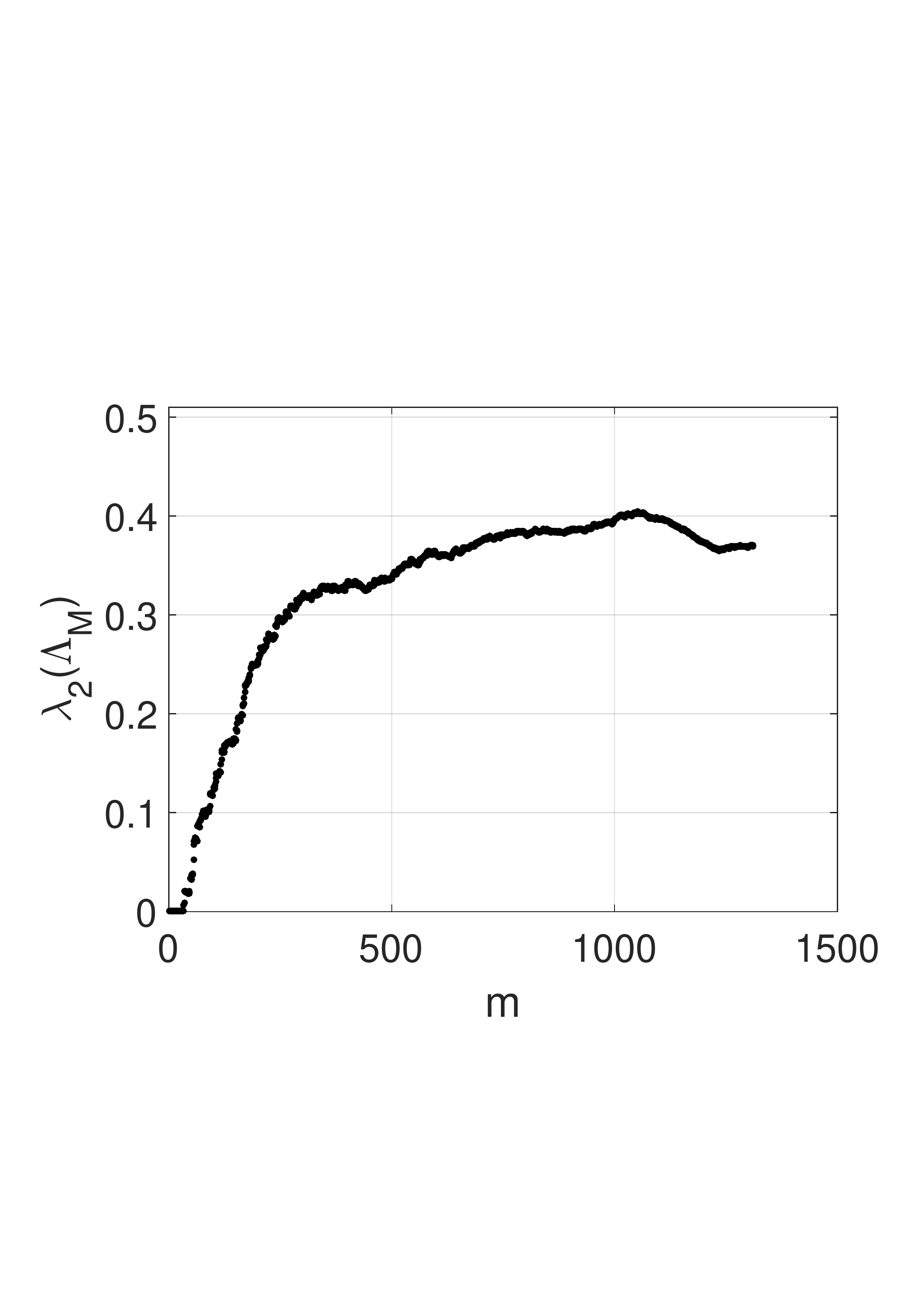}
\vspace*{-2cm}
\caption{Fiedler value of the weighted-adjacency matrices for the game fixtures of two sports in the season 2014-2015: (left) football Barclays premier league, and (right) basketball NBA league.}
\label{fig:premier-nba}
\end{figure}

\paragraph{Crowdsourcing contests} We consider participation of users in contests of two competition-based online labour platforms: (i) online platform for software development TopCoder and (ii) online platform for various kinds of tasks Tackcn. We refer to coders in TopCoder and workers in Taskcn as users. We consider contests of different categories observed in year 2012. In both these systems, the participation in contests is according to choices made by users. 

For each set of tasks of given category, we conduct the following analysis. We consider a conditioned dataset that consists only of a set of top-$n$ users with respect to the number of contests they participated in given year, and of all contests attended by at least two users from this set. We then evaluate Fiedler value of the weighted-adjacency matrix for parameter $n$ ranging from $2$ to the smaller of $100$ or the total number of users. Our analysis reveals that the Fiedler value tends to decrease with $n$. This indicates that the larger the number of users included, the less connected the weighted-adjacency matrix is. See the top plots in Figure~\ref{fig:TopCoder}. 

We also evaluated the smallest number of contests from the beginning of the year that is needed for the Fiedler value of the weighted-adjacency matrix to assume a strictly positive value. See the bottom plots in Figure~\ref{fig:TopCoder}. We observe that this threshold number of contests tends to increase with the number of top users considered. There are instances for which this threshold substantially increases for some number of the top users. This, again, indicates that the algebraic connectivity of the weighted-adjacency matrices tends to decrease with the number of top users considered.

\begin{figure}[t]
\centering
\vspace*{-1.8cm}
\includegraphics[width=0.49\textwidth]{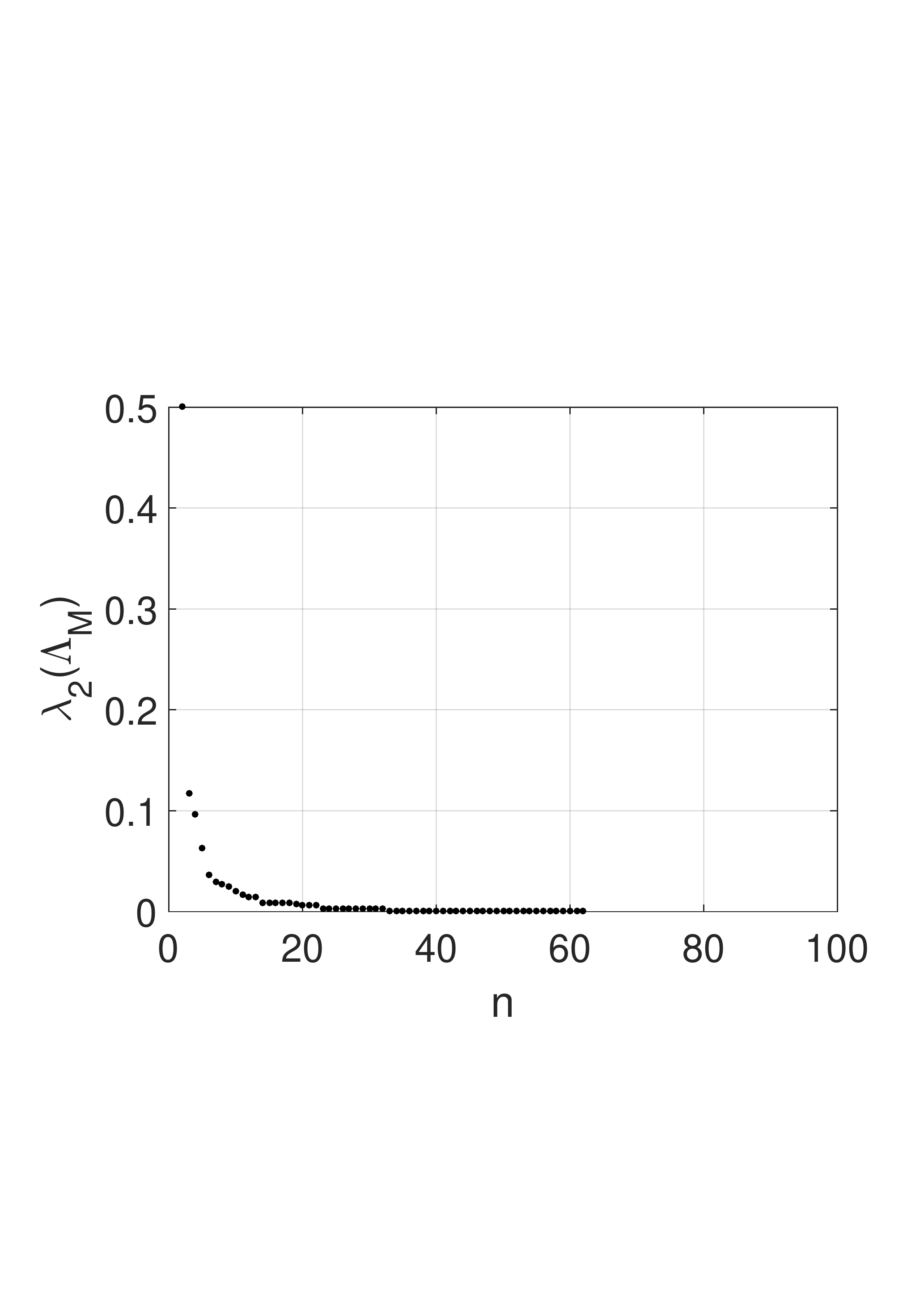}\hspace*{-0.2cm}
\includegraphics[width=0.49\textwidth]{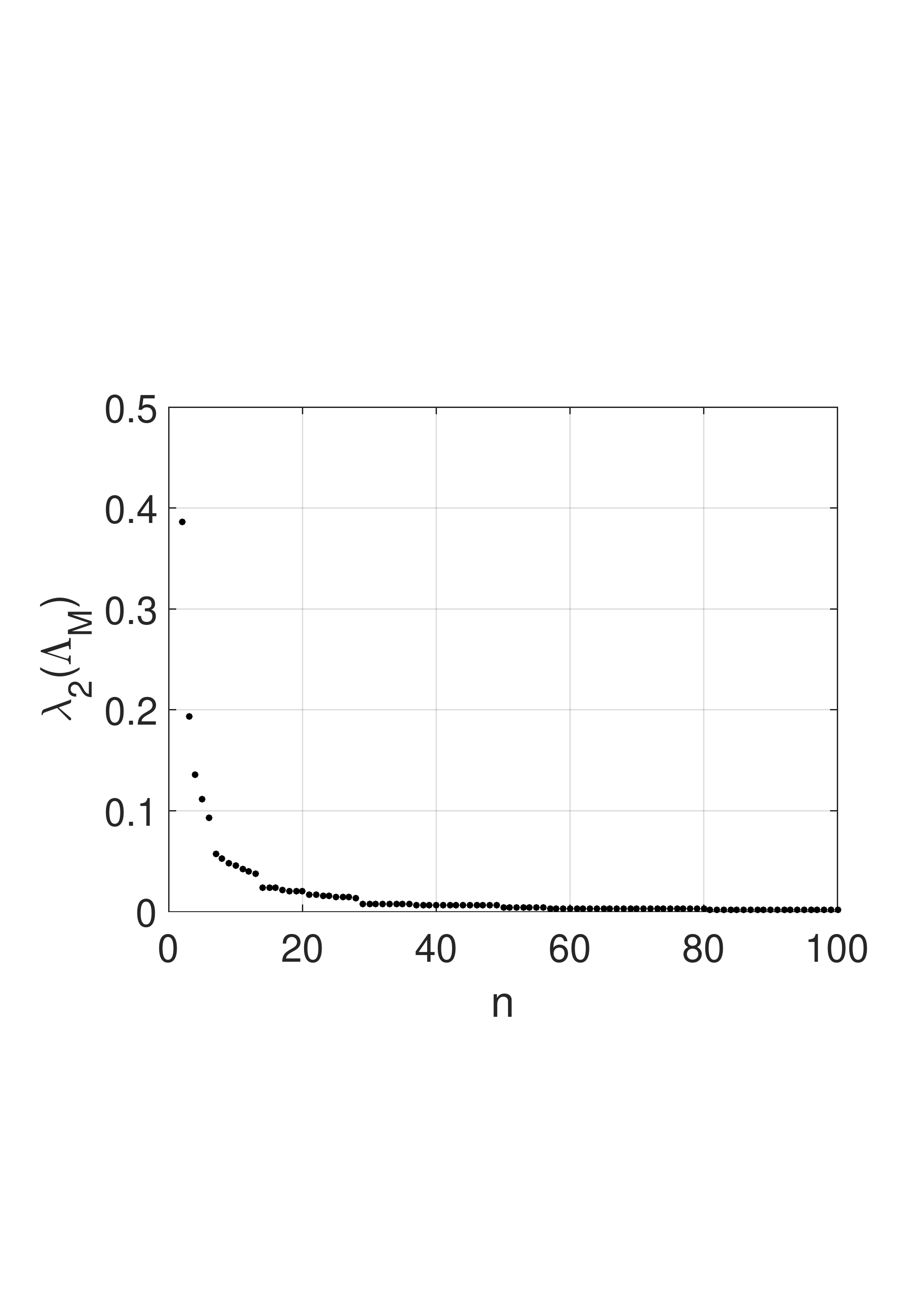}\hspace*{-0.2cm}
\\\vspace*{-4cm}
\includegraphics[width=0.49\textwidth]{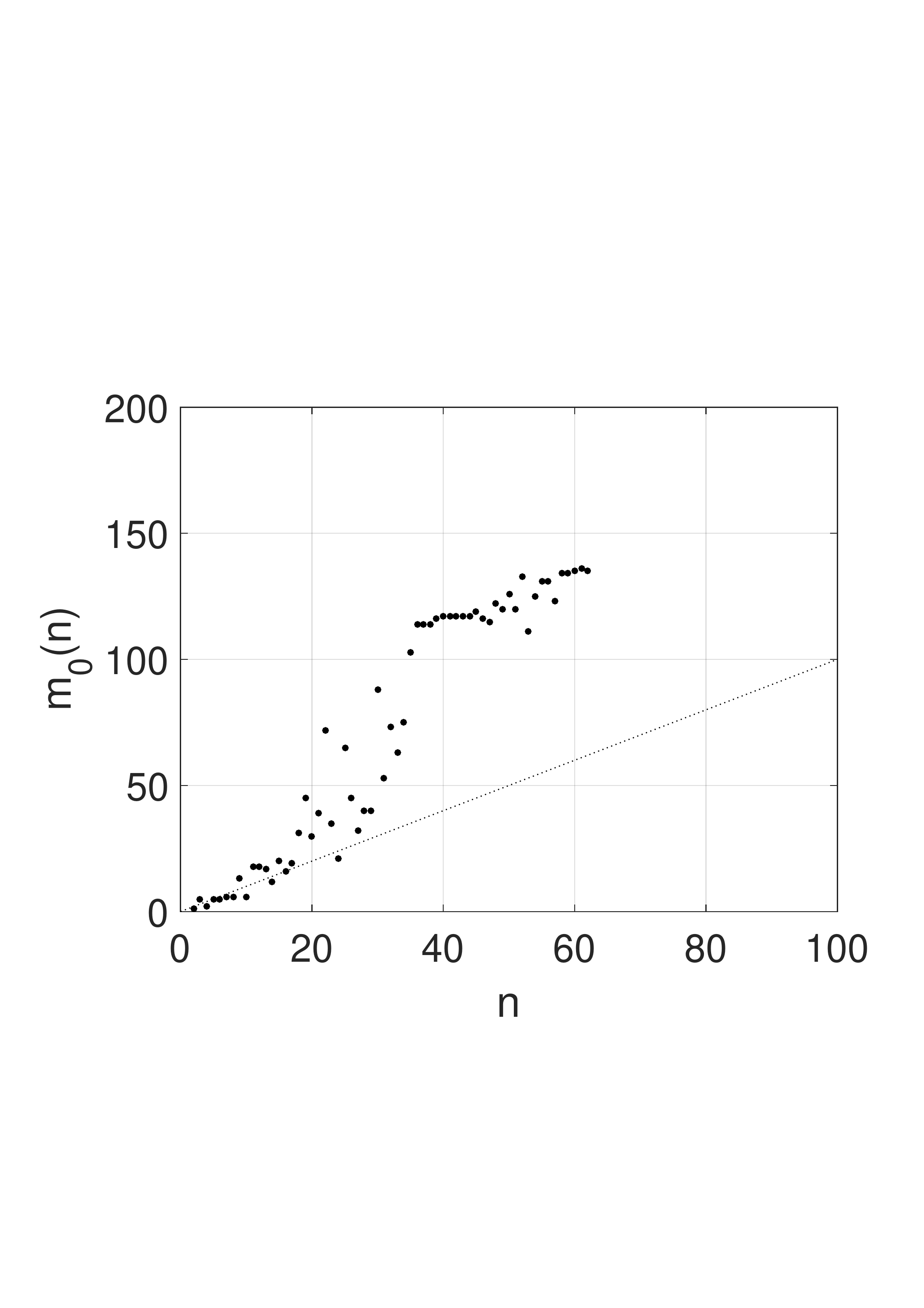}\hspace*{-0.2cm}
\includegraphics[width=0.49\textwidth]{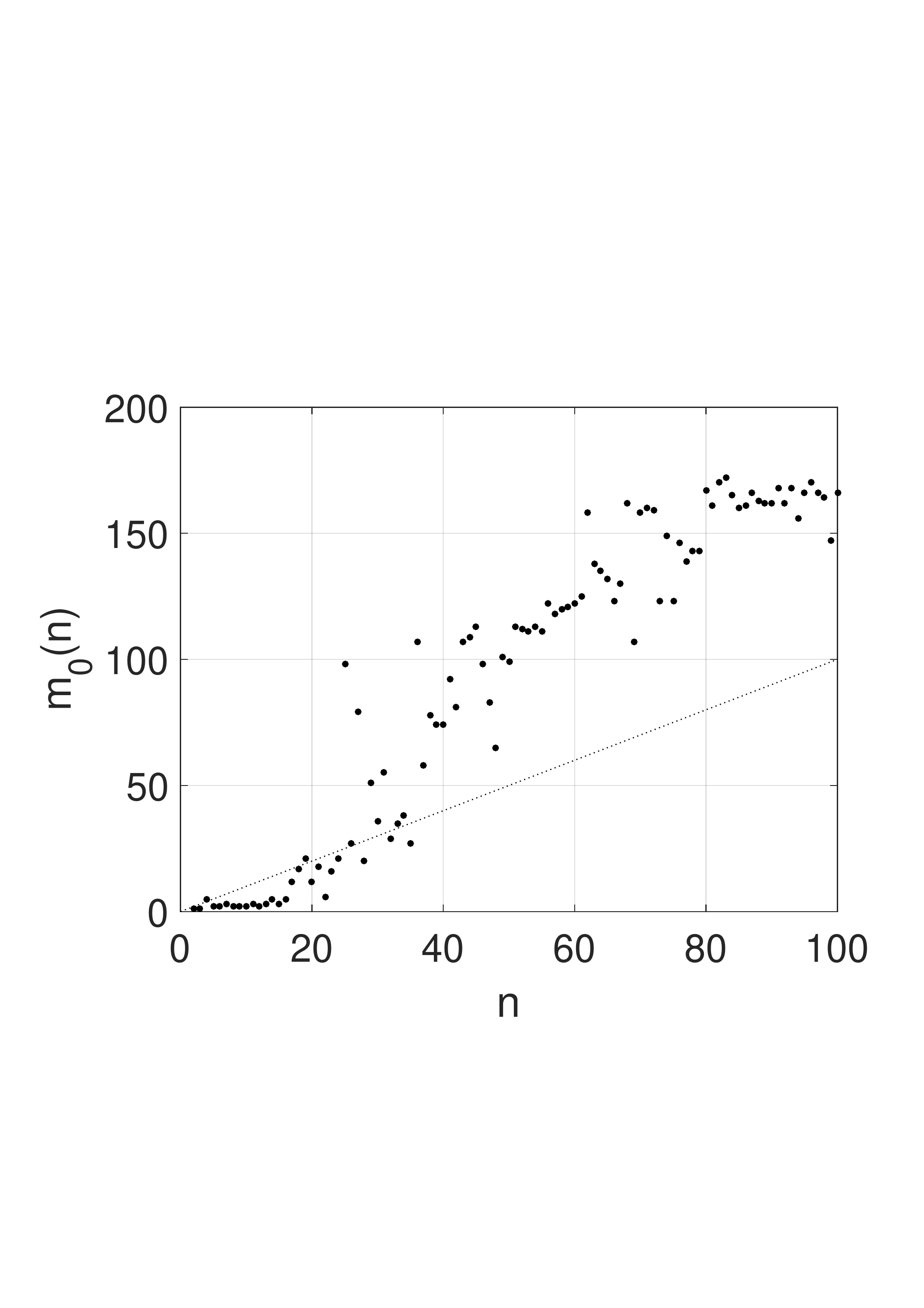}\hspace*{-0.2cm}
\vspace*{-1.5cm}
\caption{(Left) Topcoder data restricted to top-$n$ coders and (Right) same as left but for Taskcn, for Design and Website task categories, respectively. The top plots show the Fiedler value and the bottom plots show the minimum number of contests to observe a strictly positive Fiedler value.}
\label{fig:TopCoder}
\end{figure}



\section{Conclusion}
\label{sec:conc}
The results of this paper elucidate how the estimation accuracy of a Thurstone choice model parameter depends on the given model and the structure of comparison sets. They show that a key factor is the algebraic connectivity of a weighted-adjacency matrix, which is specific to given model. It is shown that for a large class of Thurstone choice models, including well-known instances, there is a diminishing returns decrease of the estimation error with the cardinality of comparison sets at a slow rate for comparison sets of three of more items. 

The results provide guidelines to the designers of competition schedules such as to ensure that a schedule has a well-connected weighted-adjacency matrix and to expect limited estimation accuracy gains by enlarging the size of comparison sets.

\bibliography{ref}

\newpage
\appendix

\section{Background Material}
\label{sec:back}

\paragraph{Location of Eigenvalues} We make note of the well-known Ger\v sgorin circles theorem, which we state as at the following lemma:

\begin{lemma} Let $\vec{A}\in \reals^{n\times n}$, then all eigenvalues of $\vec{A}$ are located in the union of $n$ discs
$$
\cup_{i=1}^n \left\{z\in C: |z-a_{ii}| \leq \sum_{j\neq i} |a_{i,j}|\right\}.
$$
\label{lem:gersgorin}
\end{lemma}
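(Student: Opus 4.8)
The plan is to prove this classical Ger\v sgorin theorem by the standard dominant-coordinate argument applied to an eigenvector. Let $\lambda$ be an arbitrary eigenvalue of $\vec{A}$ and let $\vec{x}\neq \vec{0}$ be an associated eigenvector, so that $\vec{A}\vec{x}=\lambda \vec{x}$. First I would pick an index $i$ for which $|x_i|=\max_{1\le j\le n}|x_j|$; since $\vec{x}\neq \vec{0}$, this maximum is strictly positive, so $x_i\neq 0$. Writing out the $i$-th coordinate of the identity $\vec{A}\vec{x}=\lambda\vec{x}$ gives $\sum_{j=1}^n a_{i,j}x_j=\lambda x_i$, which I would rearrange as $(\lambda-a_{ii})x_i=\sum_{j\neq i} a_{i,j}x_j$.

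Next I would take absolute values, divide through by $|x_i|>0$, and apply the triangle inequality together with the maximality of $|x_i|$ (so that $|x_j|/|x_i|\le 1$ for every $j$):
$$
|\lambda-a_{ii}|=\left|\sum_{j\neq i} a_{i,j}\frac{x_j}{x_i}\right|\le \sum_{j\neq i}|a_{i,j}|\,\frac{|x_j|}{|x_i|}\le \sum_{j\neq i}|a_{i,j}|.
$$
This shows that $\lambda$ lies in the $i$-th disc $\{z\in C:|z-a_{ii}|\le \sum_{j\neq i}|a_{i,j}|\}$, hence in the union of the $n$ discs. Since $\lambda$ was an arbitrary eigenvalue, every eigenvalue of $\vec{A}$ lies in this union, which is the claim.

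I do not expect any genuine obstacle in this proof: the only step that requires even minimal care is the choice of the coordinate $i$ attaining the maximum modulus and the resulting observation that $x_i\neq 0$, which is precisely what legitimizes the division and makes each ratio $|x_j|/|x_i|$ bounded by $1$.
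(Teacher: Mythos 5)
Your proof is correct: it is the standard dominant-coordinate argument for the Ger\v sgorin circles theorem, and every step (choice of the index $i$ maximizing $|x_i|$, the rearrangement $(\lambda-a_{ii})x_i=\sum_{j\neq i}a_{i,j}x_j$, and the triangle-inequality bound) is sound. Note that the paper itself states this lemma without proof, as a well-known background fact, so there is no alternative argument in the paper to compare against; your write-up is exactly the canonical proof one would supply.
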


\paragraph{Properties of Positive Definite and Laplacian Matrices} A symmetric matrix $\vec{A}\in \reals^{n\times n}$ is said to be \emph{positive semidefinite} if $
\vec{x}^\top \vec{A} \vec{x} \geq 0$ for all nonzero $\vec{x}\in \reals^n$. If the inequality is replaced with strict inequality, $\vec{A}$ is said to be \emph{positive definite}. 

Each eigenvalue of a positive definite matrix is a positive real number. Each eigenvalue of a positive semidefinite matrix is a nonnegative real number. 

For two matrices $\vec{A},\vec{B}\in \reals^{n\times n}$, we write $\vec{A} \succeq \vec{B}$ for the \emph{positive semidefinite ordering}, which means that $\vec{A} - \vec{B}$ is a positive semidefinite matrix. Similarly, we write $\vec{A} \succ \vec{B}$ for the \emph{positive definite ordering}, which means that $\vec{A}-\vec{B}$ is a positive definite matrix. Note that $\vec{A} \succeq \vec{0}$ means that $\vec{A}$ is a positive semidefinite matrix, and $\vec{A}\succ \vec{0}$ means that $\vec{A}$ is a positive definite matrix.

We note the following ordering relations for eigenvalues of two positive definite matrices that satisfy the positive semidefinite ordering (e.g., see Corollary~7.7.4~\cite{horn}).

\begin{lemma} For any two positive definite matrices $\vec{A},\vec{B}\in \reals^{n\times n}$ such that $\vec{A}\succeq \vec{B}$, $\lambda_i(\vec{A})\geq \lambda_i(\vec{B})$ for all $i = 1,2,\ldots,n$.
\label{lem:EigAsuccB}
\end{lemma}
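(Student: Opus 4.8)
The plan is to derive the eigenvalue ordering from the Courant--Fischer min--max characterization. Since $\vec{A}$ and $\vec{B}$ are positive definite they are in particular real symmetric, so, with the ascending convention $\lambda_1(\vec{C})\le\cdots\le\lambda_n(\vec{C})$ used throughout the paper, each admits the representation
\[
\lambda_i(\vec{C}) \;=\; \min_{\substack{V\subseteq\reals^n\\ \dim V = i}}\ \max_{\substack{\vec{x}\in V\\ \|\vec{x}\|_2=1}}\ \vec{x}^\top\vec{C}\vec{x},
\qquad i=1,\dots,n,
\]
where the inner maximum is attained because $\{\vec{x}\in V:\|\vec{x}\|_2=1\}$ is compact and the quadratic form is continuous.

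First I would record the pointwise consequence of the hypothesis $\vec{A}\succeq\vec{B}$: because $\vec{A}-\vec{B}$ is positive semidefinite, $\vec{x}^\top\vec{A}\vec{x}=\vec{x}^\top\vec{B}\vec{x}+\vec{x}^\top(\vec{A}-\vec{B})\vec{x}\ge\vec{x}^\top\vec{B}\vec{x}$ for every $\vec{x}\in\reals^n$. Then I would fix an index $i$ and an arbitrary $i$-dimensional subspace $V$, take the maximum over unit vectors of $V$ on both sides of this pointwise inequality to obtain $\max_{\vec{x}\in V,\,\|\vec{x}\|_2=1}\vec{x}^\top\vec{A}\vec{x}\ge\max_{\vec{x}\in V,\,\|\vec{x}\|_2=1}\vec{x}^\top\vec{B}\vec{x}$, and finally take the minimum over all such $V$; by Courant--Fischer applied to each side this is exactly $\lambda_i(\vec{A})\ge\lambda_i(\vec{B})$.

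There is no genuinely hard step here: the only points requiring care are that positive definiteness supplies the symmetry needed for Courant--Fischer, and that a pointwise inequality between continuous functions survives first a supremum over a compact set and then an infimum over the family of $i$-dimensional subspaces. As an alternative one-line route, one could invoke Weyl's perturbation inequality $\lambda_i(\vec{B}+\vec{E})\ge\lambda_i(\vec{B})+\lambda_1(\vec{E})$ with $\vec{E}=\vec{A}-\vec{B}\succeq\vec{0}$, whence $\lambda_1(\vec{E})\ge0$ yields the claim; but since Weyl's inequality is itself proved via Courant--Fischer, the direct argument above is the most self-contained.
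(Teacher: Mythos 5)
Your proof is correct: the pointwise inequality $\vec{x}^\top\vec{A}\vec{x}\ge\vec{x}^\top\vec{B}\vec{x}$ combined with the Courant--Fischer characterization (with the paper's ascending eigenvalue convention) immediately yields $\lambda_i(\vec{A})\ge\lambda_i(\vec{B})$ for every $i$. The paper gives no proof of this lemma, deferring instead to Corollary~7.7.4 of Horn and Johnson, and your min--max argument is precisely the standard proof of that corollary, so there is nothing substantive to compare.
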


For any $\vec{A}\in \reals^{n\times n}$ with zero diagonal such that $\vec{A}\geq \vec{0}$, the Laplacian matrix $L_{\vec{A}}$ is positive semidefinite. This follows from the localization of eigenvalues by the Ger\v sgorin circles theorem. 

\begin{lemma} If $\vec{A}\in \reals^n$ is a symmetric matrix with zero diagonal, then
\begin{enumerate}
\item [(i)] $\lambda_1(L_{\vec{A}}) = 0$, and
\item [(ii)] $\lambda_i(L_{\vec{A}}) = \lambda_{i-1}(\vec{U}^\top L_{\vec{A}} \vec{U})$, for $i\in \{2,3,\ldots,n\}$
\end{enumerate}
where $\vec{U}\in \vec{R}^{n\times n-1}$ is any matrix such that $\vec{U}^\top \vec{U} = \vec{I}$.
\label{lem:Leig}
\end{lemma}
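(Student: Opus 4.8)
The plan is to prove (i) by a one-line computation and (ii) by an orthogonal change of basis that block-diagonalizes $L_{\vec{A}}$.

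\emph{Part (i).} First I would observe that, straight from the definition $L_{\vec{A}} = \mathrm{diag}(\vec{A}\vec{1}) - \vec{A}$, one has $L_{\vec{A}}\vec{1} = \mathrm{diag}(\vec{A}\vec{1})\vec{1} - \vec{A}\vec{1} = \vec{A}\vec{1} - \vec{A}\vec{1} = \vec{0}$, so $\vec{1}$ is an eigenvector of $L_{\vec{A}}$ with eigenvalue $0$; in particular $0$ is one of the eigenvalues $\lambda_1(L_{\vec{A}}),\ldots,\lambda_n(L_{\vec{A}})$. Since $L_{\vec{A}}$ is positive semidefinite — which holds for a weighted-adjacency matrix by the Ger\v sgorin circles argument recorded just above this lemma — all of its eigenvalues are nonnegative, so $0$ is the smallest one, i.e.\ $\lambda_1(L_{\vec{A}}) = 0$.

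\emph{Part (ii).} Let $\vec{U}\in\reals^{n\times(n-1)}$ have columns forming an orthonormal basis of the orthogonal complement of $\vec{1}$, so that $\vec{Q} := [\tfrac{1}{\sqrt{n}}\vec{1}\ \ \vec{U}]$ is an $n\times n$ orthogonal matrix. I would then compute
\[
\vec{Q}^\top L_{\vec{A}} \vec{Q}
= \begin{pmatrix}
\frac{1}{n}\vec{1}^\top L_{\vec{A}}\vec{1} & \frac{1}{\sqrt{n}}\vec{1}^\top L_{\vec{A}}\vec{U}\\
\frac{1}{\sqrt{n}}\vec{U}^\top L_{\vec{A}}\vec{1} & \vec{U}^\top L_{\vec{A}}\vec{U}
\end{pmatrix}
= \begin{pmatrix} 0 & \vec{0}^\top \\ \vec{0} & \vec{U}^\top L_{\vec{A}}\vec{U} \end{pmatrix},
\]
where the top-left and off-diagonal blocks vanish because $L_{\vec{A}}\vec{1} = \vec{0}$ and, by symmetry of $L_{\vec{A}}$, also $\vec{1}^\top L_{\vec{A}} = \vec{0}^\top$. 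Since $\vec{Q}$ is orthogonal, $\vec{Q}^\top L_{\vec{A}}\vec{Q}$ is orthogonally similar to $L_{\vec{A}}$, hence has the same multiset of eigenvalues, and by the block structure this multiset is $\{0\}$ together with the $n-1$ eigenvalues of $\vec{U}^\top L_{\vec{A}}\vec{U}$. Using positive semidefiniteness again, $0$ is the smallest of these, namely $\lambda_1(L_{\vec{A}})$, so the remaining values $\lambda_2(L_{\vec{A}})\le\cdots\le\lambda_n(L_{\vec{A}})$ in increasing order coincide with the eigenvalues of $\vec{U}^\top L_{\vec{A}}\vec{U}$ in increasing order, which is exactly $\lambda_i(L_{\vec{A}}) = \lambda_{i-1}(\vec{U}^\top L_{\vec{A}}\vec{U})$ for $i\in\{2,\ldots,n\}$.

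There is no genuinely hard step: the entire argument rests on the identity $L_{\vec{A}}\vec{1}=\vec{0}$ and on $L_{\vec{A}}\succeq\vec{0}$. The only points needing care are (a) that the index shift in (ii) uses $0$ being the \emph{smallest} eigenvalue — which is precisely where positive semidefiniteness is invoked — and (b) that $\vec{U}$ should be taken with columns orthogonal to $\vec{1}$ (the natural choice, consistent with the relation $\vec{I}-\tfrac1n\vec{1}\vec{1}^\top = \vec{U}\vec{U}^\top$ used elsewhere in the paper); for any such $\vec{U}$ the spectrum of $\vec{U}^\top L_{\vec{A}}\vec{U}$ is the same, since two such choices differ by an $(n-1)\times(n-1)$ orthogonal conjugation.
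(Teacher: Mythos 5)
Your proof is correct, and it takes a cleaner route than the paper's. The paper argues pointwise on eigenvectors: for an eigenvector $\vec{y}$ of $L_{\vec{A}}$ with eigenvalue $\lambda$ it sets $\vec{x}=\vec{U}^{-1}\vec{y}$ and derives $\vec{x}^\top \vec{U}^\top L_{\vec{A}}\vec{U}\vec{x}=\lambda\vec{x}^\top\vec{x}$, concluding that $\lambda$ is an eigenvalue of $\vec{U}^\top L_{\vec{A}}\vec{U}$. That argument is shaky as written: $\vec{U}$ is $n\times(n-1)$ and hence not invertible in the stated sense, and a Rayleigh-quotient identity for a single vector does not by itself certify an eigenpair. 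Your orthogonal block-diagonalization $\vec{Q}^\top L_{\vec{A}}\vec{Q}=\mathrm{diag}(0,\vec{U}^\top L_{\vec{A}}\vec{U})$ with $\vec{Q}=[\tfrac{1}{\sqrt n}\vec{1}\ \ \vec{U}]$ gives the full multiset identity $\mathrm{spec}(L_{\vec{A}})=\{0\}\cup\mathrm{spec}(\vec{U}^\top L_{\vec{A}}\vec{U})$ in one step, and you correctly isolate the two places where extra hypotheses are actually used: positive semidefiniteness (which needs $\vec{A}\ge\vec{0}$ entrywise, not merely symmetry with zero diagonal, to place $0$ as the \emph{smallest} eigenvalue — a gap in the lemma's literal statement that the paper's proof also glosses over), and the requirement $\vec{U}^\top\vec{1}=\vec{0}$, without which the claimed equality fails (by Cauchy interlacing one only gets $\lambda_i(L_{\vec{A}})\le\lambda_i(\vec{U}^\top L_{\vec{A}}\vec{U})\le\lambda_{i+1}(L_{\vec{A}})$ for a generic $\vec{U}$ with orthonormal columns; a $2\times 2$ example with $\vec{U}=(1,0)^\top$ already breaks equality). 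Both of these caveats are implicit in how the lemma is used elsewhere in the paper, so your version is the one I would keep.
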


We shall use the following property of real symmetric matrices:

\begin{lemma} If $\vec{A},\vec{B}\in \reals^{n\times n}$ are real symmetric matrices with zero diagonals such that $\vec{B} \geq \vec{A}$ where the inequality holds element-wise, then $L_{\vec{B}} \succeq L_{\vec{A}}$. 
\label{lem:laplaceposdef}
\end{lemma}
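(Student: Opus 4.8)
The plan is to exploit the linearity of the map $\vec{A}\mapsto L_{\vec{A}}$ and then invoke the Ger\v sgorin circles theorem (Lemma~\ref{lem:gersgorin}), exactly as was done to argue that a Laplacian of a nonnegative zero-diagonal matrix is positive semidefinite. First I would observe that $L_{\vec{B}} - L_{\vec{A}} = \mathrm{diag}((\vec{B}-\vec{A})\vec{1}) - (\vec{B}-\vec{A}) = L_{\vec{C}}$, where $\vec{C} := \vec{B}-\vec{A}$. By hypothesis $\vec{C}$ is real symmetric, has zero diagonal, and satisfies $\vec{C}\geq \vec{0}$ element-wise. Hence it suffices to show that $L_{\vec{C}}\succeq \vec{0}$, i.e.\ that $L_{\vec{C}}$ is positive semidefinite whenever $\vec{C}$ is a symmetric matrix with zero diagonal and nonnegative entries.

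For this reduced claim there are two equivalent routes, and I would pick the Ger\v sgorin route for consistency with the surrounding text. The matrix $L_{\vec{C}}$ has diagonal entries $(L_{\vec{C}})_{ii} = \sum_{j\neq i} c_{ij} \geq 0$ and off-diagonal entries $(L_{\vec{C}})_{ij} = -c_{ij}$ for $i\neq j$. The $i$-th Ger\v sgorin disc is therefore centered at $\sum_{j\neq i} c_{ij}$ with radius $\sum_{j\neq i} |{-c_{ij}}| = \sum_{j\neq i} c_{ij}$, the last equality using $c_{ij}\geq 0$. Every such disc has left endpoint $0$, so by Lemma~\ref{lem:gersgorin} all eigenvalues of the (symmetric, hence real-spectrum) matrix $L_{\vec{C}}$ are nonnegative. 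Thus $L_{\vec{C}}\succeq \vec{0}$, and adding $L_{\vec{A}}$ back gives $L_{\vec{B}} = L_{\vec{A}} + L_{\vec{C}} \succeq L_{\vec{A}}$.

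Alternatively — and I might include this as a one-line remark — one can avoid Ger\v sgorin entirely by writing the quadratic form directly: for any $\vec{x}\in\reals^n$,
$$
\vec{x}^\top L_{\vec{C}}\,\vec{x} \;=\; \frac{1}{2}\sum_{i=1}^n\sum_{j=1}^n c_{ij}\,(x_i-x_j)^2 \;\geq\; 0,
$$
which is manifestly nonnegative since each $c_{ij}\geq 0$. Either way the conclusion is immediate.

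There is essentially no hard step here: the only things to be careful about are (i) that $L$ is genuinely linear in its argument, so the difference of two Laplacians is again a Laplacian, and (ii) keeping track of the signs of the off-diagonal entries when computing the Ger\v sgorin radii, which is where the element-wise hypothesis $\vec{B}\geq\vec{A}$ (equivalently $\vec{C}\geq\vec{0}$) is used. The element-wise order on the adjacency matrices is exactly what makes the Ger\v sgorin discs land in the closed right half-line, which is the whole content of the lemma.
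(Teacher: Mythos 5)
Your proposal is correct and follows essentially the same route as the paper: write $L_{\vec{B}}-L_{\vec{A}}=L_{\vec{C}}$ with $\vec{C}=\vec{B}-\vec{A}\geq \vec{0}$ by linearity, then conclude from the positive semidefiniteness of the Laplacian of a nonnegative zero-diagonal matrix (which the paper also justifies via the Ger\v sgorin circles theorem). Your explicit computation of the disc centers and radii, and the alternative quadratic-form identity, merely spell out details the paper leaves implicit.
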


\paragraph{Chernoff Tail Bounds} The following bounds follow from the Chernoff bound:

\begin{lemma} Suppose that $X$ is a sum of $m$ independent Bernoulli random variables each with mean $p$, then if $q \leq p \leq 2q$,
\begin{equation}
\Pr[X \leq q m] \leq \exp\left(-\frac{(q-p)^2}{4q} m\right)
\label{equ:che1}
\end{equation}
and, if $p \leq q$, 
\begin{equation}
\Pr[X \geq q m] \leq \exp\left(-\frac{(q-p)^2}{4q} m\right).
\label{equ:che2}
\end{equation}
\label{lem:chernoff}
\end{lemma}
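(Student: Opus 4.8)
The plan is to apply the exponential Markov (Chernoff) method to the sum $X = \sum_{i=1}^m X_i$ of i.i.d.\ Bernoulli$(p)$ variables, obtain a single clean exponent valid for both tails, and then reduce the two claimed bounds to one elementary one‑variable inequality.

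First I would handle the lower tail. For any $\lambda > 0$, using independence, $\E[e^{-\lambda X_1}] = 1-p+pe^{-\lambda}$, and the bound $1-p+pe^{-\lambda}\le e^{p(e^{-\lambda}-1)}$,
$$
\Pr[X \le qm] = \Pr[e^{-\lambda X}\ge e^{-\lambda qm}] \le e^{\lambda qm}\,\E[e^{-\lambda X}] \le \exp\!\Big(m\big(\lambda q + p(e^{-\lambda}-1)\big)\Big).
$$
If $q=p$ the claimed bound is trivial (its right side equals $1$); if $q<p$ the exponent is minimized at $e^{-\lambda}=q/p$, i.e.\ $\lambda=\ln(p/q)>0$, which gives $\Pr[X\le qm]\le\exp(-m\,\varphi(q,p))$ with $\varphi(q,p):=q\ln(q/p)-q+p$. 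The upper tail is symmetric: for $\lambda>0$, $\Pr[X\ge qm]\le e^{-\lambda qm}\E[e^{\lambda X}]\le\exp(m(p(e^{\lambda}-1)-\lambda q))$, and optimizing at $e^{\lambda}=q/p$ (admissible when $q\ge p$) yields $\Pr[X\ge qm]\le\exp(-m\,\varphi(q,p))$ with the same $\varphi$.

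It then remains to show $\varphi(q,p)\ge (q-p)^2/(4q)$ in the two stated regimes: $p\le q$ for the upper tail, and $q\le p\le 2q$ for the lower tail. Putting $t=p/q$, a direct computation gives $\varphi(q,p)=q\,(t-1-\ln t)$ and $(q-p)^2/(4q)=q(t-1)^2/4$, so everything reduces to the scalar inequality $g(t):=t-1-\ln t-\tfrac14(t-1)^2\ge 0$, required on $(0,1]$ (upper tail) and on $[1,2]$ (lower tail). One checks $g(1)=g'(1)=0$, $g'(t)=1-1/t-(t-1)/2$, and $g''(t)=1/t^2-1/2$. On $(0,1]$ we have $g''>0$, so $g$ is convex there and lies above its tangent at $t=1$, which is identically zero; hence $g\ge 0$. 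On $[1,2]$, since $g'''(t)=-2/t^3<0$ the function $g'$ is concave, and as $g'(1)=g'(2)=0$ it stays nonnegative on $[1,2]$; thus $g$ is nondecreasing there and $g\ge g(1)=0$. Combining these estimates with the two tail bounds above gives both inequalities of the lemma.

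I expect the only nonroutine step to be the verification of $g\ge 0$ on $[1,2]$: the convexity argument that works on $(0,1]$ breaks past $t=\sqrt2$, so one must instead use the concavity of $g'$ together with the fact that $g'$ vanishes at both endpoints $t=1$ and $t=2$ — and it is precisely here that the hypothesis $p\le 2q$ is used. The rest is a mechanical application of the Chernoff method.
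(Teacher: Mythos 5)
Your proposal is correct and follows essentially the same route as the paper: the exponential Chernoff method with the optimizer $e^{\lambda}=q/p$ (equivalently $s=\log(q/p)$), followed by an elementary lower bound of the resulting rate function $q\log(q/p)+p-q$ by $(q-p)^2/(4q)$. The only difference is cosmetic — you verify the scalar inequality $t-1-\ln t\ge (t-1)^2/4$ by convexity/concavity of $g$ and $g'$, whereas the paper integrates the bound $g'(\epsilon)=\epsilon/(q-\epsilon)\ge\epsilon/(2q)$; both correctly isolate where the hypothesis $p\le 2q$ enters.
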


\begin{proof} We prove only prove (\ref{equ:che2}) as (\ref{equ:che1}) follows by similar arguments. By the Chernoff's bound, for every $s > 0$,
\begin{eqnarray*}
\Pr[X \geq q m] & \leq & e^{-s qm}\E[e^{sX}]\\
&=& e^{-sqm} (1-p+pe^s)^m\\
&=& e^{-m h(s)}
\end{eqnarray*}
where $h(s) = qs - \log(1-p+pe^s)$.
Since $\log(1-x) \leq -x$ for all $x\in \reals$, we have $h(s) \geq qs +p - p e^s$. Take $s = s^* := \log(q/p)$ to obtain $h(s^*) \geq q \log(q/p) + p - q$.

Now, let $\epsilon = q-p$, and note that $q \log(q/p) + p - q := g(\epsilon)$ where $g(\epsilon) = q \log(q/(q-\epsilon) - \epsilon$. Since $g'(\epsilon) = q/(q-\epsilon) - 1 
= \epsilon/(q-\epsilon) \geq \epsilon/(2q)$, we have $g(\epsilon) = \int_0^\epsilon g'(x)dx \geq \epsilon^2/(4q)$.

Hence, it follows that $h(s^*) \geq (p-q)^2/(4q)$, and, thus
$$
\Pr[X\geq q m] \leq \exp\left(-\frac{1}{4q}(p-q)^2\right).
$$
\end{proof}

\section{Proof of Lemma~\ref{lem:mle-taylor}}

Let $\Delta = \widehat{\vec{x}} - \vec{x}^\star$. By the Taylor expansion, we have
\begin{equation}
g(\widehat{\vec{x}}) \ge g(\vec{x}^\star) + \nabla g (\vec{x}^\star)^\top \Delta  +\frac{1}{2} \min_{\alpha \in [0,1]} \Delta^\top \nabla^2 g (\vec{x}^\star+\alpha \Delta) \Delta.
\label{eq:taylor-pair}
\end{equation}

Since $g(\widehat{\vec{x}}) \leq g(\vec{x}^\star)$, we have
$$
\min_{\alpha \in [0,1]} \Delta^\top \nabla^2 g (\vec{x}^\star+\alpha \Delta) \Delta \leq -\nabla g (\vec{x}^\star)^\top \Delta.
$$
Hence, 
\begin{equation}
\min_{\vec{x} \in \calX} \Delta^\top \nabla^2 g(\vec{x}) \Delta \le 2  \left\|\nabla g(\vec{x}^\star)\right\|_2 \| \Delta \|_2.
\label{eq:taylor-2}
\end{equation}

Fix an arbitrary $\vec{x}\in \calX$. From condition (i) it follows that $\nabla^2 g(\vec{x})$ has eigenvalue $0$ with eigenvector $\vec{1}$. Combining with condition (ii), we have

\begin{equation}
0=\lambda_1(\nabla^2 g(\vec{x})) < \lambda_2(\nabla^2 g(\vec{x})) \le \dots\le \lambda_n(\nabla^2 g(\vec{x})).
\label{equ:lam}
\end{equation}

Let $\vec{U} = [\vec{u}_1,\vec{u}_2,\ldots,\vec{u}_n] \in \reals^{n\times n}$ where $\vec{u}_1,\vec{u}_2,\ldots,\vec{u}_n$ are ortonormal eigenvectors of $\nabla^2 g(\vec{x})$, which correspond to eigenvalues $\lambda_1(\nabla^2 g(\vec{x})), \lambda_2(\nabla^2 g(\vec{x})), \ldots, \lambda_n(\nabla^2 g(\vec{x}))$, respectively. Note that 
\begin{equation}
\vec{u}_1^\top \Delta = 0. 
\label{equ:ud}
\end{equation}

Let $\Lambda = \mathrm{diag}(\lambda_1(\nabla^2 g(\vec{x})), \lambda_2(\nabla^2 g(\vec{x})), \ldots, \lambda_n(\nabla^2 g(\vec{x})))$.

We have the following relations:
\begin{eqnarray*}
\Delta^\top \nabla^2 g(\vec{x})\Delta & = & \Delta^\top \vec{U}\Lambda\vec{U}^\top\Delta\\
&=& \sum_{i=1}^n \lambda_i(\nabla^2 g(\vec{x}))|(\vec{U}^\top\Delta)_i|^2\\
&=& \sum_{i=1}^n \lambda_i(\nabla^2 g(\vec{x}))|\vec{u}_i^\top\Delta|^2\\
&=& \sum_{i=2}^n \lambda_i(\nabla^2 g(\vec{x}))|\vec{u}_i^\top\Delta|^2\\
&\geq & \lambda_2(\nabla^2 g(\vec{x})) \sum_{i=2}^n |\vec{u}_i^\top\Delta|^2\\
&= & \lambda_2(\nabla^2 g(\vec{x})) \sum_{i=1}^n |\vec{u}_i^\top\Delta|^2\\
&= & \lambda_2(\nabla^2 g(\vec{x})) ||\Delta||_2^2
\end{eqnarray*}
where we use the properties in (\ref{equ:lam}) and (\ref{equ:ud}).

Hence, it follows that
\begin{equation}
\min_{\vec{x}\in \calX}\Delta^\top \nabla^2 g(\vec{x})\Delta \geq ||\Delta||_2^2 \min_{\vec{x}\in \calX} \lambda_2(\nabla^2 g(\vec{x})).
\label{eq:delta-hessian}
\end{equation}

By combining \eqref{eq:delta-hessian} and \eqref{eq:taylor-2}, we conclude the proof of the lemma.




\section{Proof of Theorem~\ref{thm:full}} \label{sec:proof-full}

The log-likelihood function in (\ref{equ:loglik0}) can be written in the following more explicit form:
\begin{equation}
\ell(\theta) = \sum_{t=1}^m \log\left(\frac{e^{\theta_{y_t}}}{\sum_{v\in S_t}e^{\theta_v}}\right).
\label{equ:llikluce}
\end{equation}
Since $\nabla^2(-\ell(\theta))\vec{1} = \vec{0}$, for all $\theta\in \reals^n$, under assumption that $\min_{\theta\in \Theta}\lambda_2 \left(-\nabla^2 \ell (\theta) \right)>0$, the upper bound in Lemma~\ref{lem:mle-taylor} holds. This combined with the following two lemmas yields the statement of the theorem. 


\begin{lemma} The following lower bound holds:
\begin{equation}
\min_{\theta \in \Theta}\lambda_2 \left(\nabla^2 (-\ell (\theta)) \right) \ge \frac{1}{k^2 e^{4b}}\frac{m}{n} \lambda_2 (L_\vec{M}). 
\label{eq:w-l}
\end{equation}
\label{lem:L11}
\end{lemma}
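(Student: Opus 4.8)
The plan is to compute each Hessian term of the negative log-likelihood explicitly for the Luce model, recognize it as a graph Laplacian supported on the clique of the corresponding comparison set, bound its edge weights below uniformly over $\Theta$, and then convert the resulting Loewner ordering into an inequality between second-smallest eigenvalues.

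First I would differentiate $-\log(p_{y_t,S_t}(\theta)) = -\theta_{y_t} + \log(\sum_{v\in S_t}e^{\theta_v})$ twice. A short calculation shows that for $i\neq j$ with $\{i,j\}\subseteq S_t$ the mixed partial equals $-p_{i,S_t}(\theta)p_{j,S_t}(\theta)$, the diagonal entry in row $i$ equals the negative of the corresponding off-diagonal row sum, and all remaining entries vanish; in particular this term does not depend on the choice $y_t$. Thus $\nabla^2(-\log p_{y_t,S_t}(\theta)) = L_{\vec{A}_t(\theta)}$, where $\vec{A}_t(\theta)$ is the nonnegative symmetric matrix with $(\vec{A}_t(\theta))_{i,j} = p_{i,S_t}(\theta)p_{j,S_t}(\theta)$ for $\{i,j\}\subseteq S_t$ and $0$ otherwise, and summing over observations gives $\nabla^2(-\ell(\theta)) = \sum_{t=1}^m L_{\vec{A}_t(\theta)}$.

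Next I would bound the edge weights uniformly. For $\theta\in[-b,b]^n$ and $i,j\in S_t$, $p_{i,S_t}(\theta)p_{j,S_t}(\theta) = e^{\theta_i+\theta_j}/(\sum_{v\in S_t}e^{\theta_v})^2 \ge e^{-2b}/(ke^{b})^2 = 1/(k^2 e^{4b})$, so $\vec{A}_t(\theta) \ge (1/(k^2e^{4b}))\vec{M}_{S_t}$ entrywise, where $\vec{M}_{S_t}$ is the $0/1$ clique-adjacency matrix on $S_t$. Lemma~\ref{lem:laplaceposdef} then gives $L_{\vec{A}_t(\theta)} \succeq (1/(k^2e^{4b}))L_{\vec{M}_{S_t}}$, and summing over $t$ yields $\nabla^2(-\ell(\theta)) \succeq (1/(k^2e^{4b}))L_{\sum_t\vec{M}_{S_t}}$. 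Because every comparison set has cardinality exactly $k$, the $(i,j)$ entry of $\sum_t\vec{M}_{S_t}$ is the number $m_{i,j}(k)$ of comparison sets containing $\{i,j\}$, which by the definition (\ref{equ:weightmatrix}) of $\vec{M}$ (constant weight $1$) equals $(m/n)m_{i,j}$; hence $\sum_t\vec{M}_{S_t} = (m/n)\vec{M}$ and $\nabla^2(-\ell(\theta))\succeq (1/(k^2e^{4b}))(m/n)L_{\vec{M}}$.

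Finally, since both sides annihilate $\vec{1}$, I would restrict to its orthogonal complement: for any $\vec{U}\in\reals^{n\times(n-1)}$ with $\vec{U}^\top\vec{U}=\vec{I}$ and $\vec{U}^\top\vec{1}=\vec{0}$ we get $\vec{U}^\top\nabla^2(-\ell(\theta))\vec{U} \succeq (1/(k^2e^{4b}))(m/n)\vec{U}^\top L_{\vec{M}}\vec{U}$, the right-hand side being positive definite since $\lambda_2(L_{\vec{M}})>0$. Lemma~\ref{lem:EigAsuccB} applied to these restricted matrices together with Lemma~\ref{lem:Leig} then gives $\lambda_2(\nabla^2(-\ell(\theta))) \ge (1/(k^2e^{4b}))(m/n)\lambda_2(L_{\vec{M}})$ for every $\theta\in\Theta$, and taking the minimum finishes the proof. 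The computation is routine; the only points needing care are the bookkeeping identity $\sum_t\vec{M}_{S_t} = (m/n)\vec{M}$ (which uses that all comparison sets share the cardinality $k$) and the passage from the semidefinite order to the Fiedler-value inequality, which must go through the null-space restriction rather than a direct application of Lemma~\ref{lem:EigAsuccB}.
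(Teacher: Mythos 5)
Your proposal is correct and follows essentially the same route as the paper: compute the Hessian entries $-p_{i,S_t}(\theta)p_{j,S_t}(\theta)$, lower-bound each edge weight by $1/(k^2e^{4b})$ uniformly over $\Theta$, deduce $\nabla^2(-\ell(\theta))\succeq \frac{1}{k^2e^{4b}}\frac{m}{n}L_{\vec{M}}$, and pass to the second-smallest eigenvalues. Your final step is in fact slightly more careful than the paper's, which invokes the eigenvalue-monotonicity lemma directly on the two (only positive semidefinite) matrices rather than restricting to the orthogonal complement of $\vec{1}$, but the two arguments are equivalent in substance.
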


\begin{proof}
From (\ref{equ:llikluce}), we have for $i,j\in \{1,2,\ldots,n\}$ such that $j\neq i$,
$$
\frac{\partial^2 \ell (\theta)}{\partial \theta_i \partial \theta_j} = \sum_{t:\{ i,j\}\subseteq S_t} p_{i,S_t}(\theta)p_{j,S_t}(\theta) 
\hbox{ and }
 \frac{\partial^2 \ell(\theta)}{\partial \theta_i^2} = -\sum_{j\neq i} \frac{\partial^2 \ell (\theta)}{\partial \theta_i \partial \theta_j}.
$$

Since for all $i\neq j$,
$$
\frac{\partial^2 \ell (\theta)}{\partial \theta_i \partial \theta_j} \ge \frac{1}{k^2 e^{4b}}m_{i,j}
$$
we have that $\nabla^2(-\ell (\theta)) -  \frac{1}{k^2 e^{4b}}\frac{m}{n} L_\vec{M}$ is a Laplacian matrix of a matrix with nonnegative elements. Every such Laplacian matrix is positive semidefinite, hence 
\begin{equation}
\nabla^2 (-\ell (\theta)) \succeq \frac{1}{k^2 e^{4b}}\frac{m}{n} L_\vec{M}.
\label{eq:pdm-ell}
\end{equation}

From \eqref{eq:pdm-ell}, we conclude \eqref{eq:w-l}.
\end{proof}


\begin{lemma} With probability at least $1-2/n$,
\begin{equation}
\left\| \nabla (-\ell (\theta^\star)) \right\|_2 \le  2\sqrt{m (\log (n)+2)}. 
\label{eq:w-u}
\end{equation}
\label{lem:L12}
\end{lemma}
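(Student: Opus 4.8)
The plan is to follow exactly the two-step recipe used for Lemma~\ref{lem:LT22} in the pair-comparison case, now applied to the Luce log-likelihood in its explicit form (\ref{equ:llikluce}), and to invoke the vector Azuma--Hoeffding bound of Lemma~\ref{prop:azuma-hoeffding}. First I would write $\nabla(-\ell(\theta^\star)) = \sum_{t=1}^m X_t$ where $X_t = \nabla(-\log p_{y_t,S_t}(\theta^\star))$. Since in the setting of Theorem~\ref{thm:full} the comparison sets $S_t$ are given (deterministic) and the choices $y_t$ are independent, the vectors $X_t$ are independent, so their partial sums form a martingale. Differentiating (\ref{equ:llikluce}), the coordinates of $X_t$ are
$$
\frac{\partial(-\log p_{y_t,S_t}(\theta))}{\partial\theta_i} = \left\{\begin{array}{ll} p_{i,S_t}(\theta)-1, & i = y_t\\ p_{i,S_t}(\theta), & i\in S_t\setminus\{y_t\}\\ 0, & \hbox{otherwise.}\end{array}\right.
$$

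Next I would carry out steps {\bf S1}-type bounds. The mean-zero property $\E[X_t]=\vec{0}$ follows from the usual score-function identity: taking the expectation over $y_t\sim p_{\cdot,S_t}(\theta^\star)$, for each $i\in S_t$ one gets $-p_{i,S_t}(\theta^\star) + p_{i,S_t}(\theta^\star)\sum_{y\in S_t}p_{y,S_t}(\theta^\star) = 0$, and the coordinate is identically $0$ for $i\notin S_t$. For the norm bound, using $p_{i,S_t}(\theta)\in[0,1]$ I would bound $(p_{y_t,S_t}(\theta^\star)-1)^2\le 1$ and $\sum_{i\in S_t\setminus\{y_t\}}p_{i,S_t}(\theta^\star)^2 \le \sum_{i\in S_t\setminus\{y_t\}}p_{i,S_t}(\theta^\star) = 1-p_{y_t,S_t}(\theta^\star)\le 1$, whence $\|X_t\|_2^2\le 2$, i.e. $\|X_t\|_2\le\sqrt{2}$. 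Note this bound is independent of $k$, which is why the resulting gradient bound matches the pair-comparison case.

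Finally I would apply Lemma~\ref{prop:azuma-hoeffding} with $\sigma=\sqrt{2}$ and $x = 2\sqrt{m(\log(n)+2)}$, giving $x^2/(2m\sigma^2) = \log(n)+2$, so
$$
\Pr\left[\left\|\nabla(-\ell(\theta^\star))\right\|_2 \ge 2\sqrt{m(\log(n)+2)}\right] \le 2e^2 e^{-(\log(n)+2)} = \frac{2}{n},
$$
which is the claim. I do not expect a genuine obstacle here; the only points needing care are the uniform norm bound $\|X_t\|_2\le\sqrt2$ (getting the constant right so that the Azuma--Hoeffding exponent yields exactly the $2/n$ failure probability) and confirming the mean-zero property, i.e. that $\theta^\star$ is the true parameter so that $\nabla(-\ell(\theta^\star))$ is a centered score vector.
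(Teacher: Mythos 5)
Your proof is correct and follows essentially the same route as the paper: the same coordinate formula for $\nabla(-\log p_{y_t,S_t}(\theta^\star))$, the same mean-zero verification, the same bound $\|X_t\|_2^2\le 2$, and the same application of the vector Azuma--Hoeffding bound with $\sigma=\sqrt{2}$ yielding failure probability $2e^2e^{-(\log(n)+2)}=2/n$. The only difference is cosmetic: you spell out the intermediate step $\sum_{i\ne y_t}p_{i,S_t}^2\le 1-p_{y_t,S_t}$, which the paper leaves implicit.
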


\begin{proof}
From (\ref{equ:llikluce}), we have
\begin{equation}
\nabla(-\ell(\theta)) = \sum_{t=1}^m \nabla(-\log(p_{y_t,S_t}(\theta)))
\label{equ:nll}
\end{equation}
where $p_{y_t,S_t}(\theta) = e^{\theta^\star_{y_t}}/\sum_{v\in S_t}e^{\theta_v}$. 

It is straightforward to derive that $\nabla(-\log(p_{y_t,S_t}(\theta)))$ has elements given by
\begin{equation}
\frac{\partial (-\log(p_{y_t,S_t}(\theta)))}{\partial \theta_i}  = \left \{
\begin{array}{ll}
-(1-p_{i,S_t}(\theta)), & \hbox{ if } i = y_t\\
p_{i,S_t}(\theta), & \hbox{ if } i\in S_t\setminus \{y_t\}\\
0, & \hbox{ otherwise}.
\end{array}
\right .
\label{equ:nllik}
\end{equation}

From (\ref{equ:nll}), $\nabla \ell(\theta)$ is a sum of independent random vectors in $\reals^n$ that satisfy for all $t\in \{1,2,\ldots,m\}$,
\begin{equation}
\E\left[\nabla (-\log(p_{y_t,S_t}(\theta^\star)))\right] = \vec{0} 
\label{eq:w-az1}
\end{equation}
and
\begin{equation}
\left\|\nabla(-\log(p_{y_t,S_t}(\theta^\star)))\right\|_2 \leq \sqrt{2}.
\label{eq:w-az2}
\end{equation}

The last two relations are easy to establish using (\ref{equ:nllik}) as follows. Equation~(\ref{eq:w-az1}) holds because for every $i\in N$,
\begin{eqnarray*}
\E\left[\frac{\partial (-\log(p_{y_t,S_t}(\theta^\star)))}{\partial \theta_i}\right]
&=& -p_{i,S_t}(\theta^\star)(1-p_{i,S_t}(\theta^\star)) + (1-p_{i,S_t}(\theta^\star))p_{i,S_t}(\theta^\star) = 0.
\end{eqnarray*}

Equation~(\ref{eq:w-az1}) follows from
$$
\left\|\nabla (-\log(p_{y_t,S_t}(\theta^\star)))\right\|_2^2 = (1-p_{y_t,S_t}(\theta^\star))^2 + \sum_{j\in S_t\setminus \{y_t\}} p_{j,S_t}(\theta^\star)^2 \leq 2.
$$

By the vector Azuma-Hoeffding bound in Lemma~\ref{prop:azuma-hoeffding}, we have
$$
\Pr[\|\nabla(-\ell(\theta^\star))\|_2\geq 2\sqrt{m(\log(n)+2)}]  \leq \frac{2}{n}.
$$
which completes the proof of \eqref{eq:w-u}.
\end{proof}

\section{Proof of Theorem~\ref{thm:karyub}} \label{sec:proof-karyub}

Since $\nabla^2 (-\ell (\theta))$ is a Laplacian matrix, by condition {\bf A1}, 
$$
\nabla^2(-\ell (\theta)) \succeq A\nabla^2(-\ell ({\bm 0})) \hbox{ for all } \theta\in [-b,b]^n.
$$
Hence, in particular, 
\begin{equation}
\min_{\theta \in [-b,b]^n}\lambda_2(-\ell (\theta)) \geq A\lambda_2(\nabla^2(-\ell ({\bm 0}))).
\label{eq:semi}
\end{equation}

We have the following two lemmas.


\begin{lemma} If $\lambda_2(L_{\overline{\vec{M}}_{w^\star}}) \ge 32 (\sigma_{F,K}/C) (n\log(n))/m$, then with probability at least $1-1/n$,
$$
\lambda_2(\nabla^2(-\ell ({\bm 0}))) \ge \frac{1}{2}C \frac{m}{n} \lambda_2(L_{\overline{\vec{M}}_{w^\star}}).
$$
\label{lem:khessi}
\end{lemma}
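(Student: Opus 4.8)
The plan is to apply the matrix Chernoff bound (Lemma~\ref{cor:matrix}) to the sum of random matrices $\nabla^2(-\ell(\vec{0})) = \sum_{t=1}^m \nabla^2(-\log(p_{y_t,S_t}(\vec{0})))$, after restricting to the subspace orthogonal to $\vec{1}$. Fix $\vec{U}\in\reals^{n\times(n-1)}$ with $\vec{U}^\top\vec{U}=\vec{I}$ and $\vec{U}^\top\vec{1}=\vec{0}$, and set $X_t := \vec{U}^\top \nabla^2(-\log(p_{y_t,S_t}(\vec{0})))\,\vec{U}$. By condition {\bf A1}, each $\nabla^2(-\log(p_{y_t,S_t}(\vec{0})))$ is a Laplacian matrix with non-negative weights and hence positive semidefinite, so $\lambda_1(X_t)\ge 0$; the $X_t$ are independent since the pairs $(S_t,y_t)$ are independent across $t$ (either $S_t$ is deterministic and $y_t$ is drawn independently given it, or the $(S_t,y_t)$ are i.i.d.). Finally, since $\nabla^2(-\ell(\vec{0}))\vec{1}=\vec{0}$ and $\nabla^2(-\ell(\vec{0}))\succeq 0$ by {\bf A1}, its second-smallest eigenvalue satisfies $\lambda_2(\nabla^2(-\ell(\vec{0}))) = \lambda_1\big(\vec{U}^\top\nabla^2(-\ell(\vec{0}))\,\vec{U}\big) = \lambda_1\big(\sum_{t=1}^m X_t\big)$ (cf.\ Lemma~\ref{lem:Leig}), so it is enough to lower bound $\lambda_1(\sum_t X_t)$.

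The first step is a lower bound on $\E[\sum_t X_t]$ in the positive semidefinite order. Conditioning on $S_t$, we have $\E[\,\nabla^2(-\log(p_{y_t,S_t}(\vec{0})))\mid S_t\,] = \sum_{y\in S_t} p_{y,S_t}(\theta^\star)\,\nabla^2(-\log(p_{y,S_t}(\vec{0})))$; applying condition {\bf A3} (so $p_{y,S_t}(\theta^\star)\ge C\,p_{y,S_t}(\vec{0}) = C/|S_t|$), the fact that each $\nabla^2(-\log(p_{y,S_t}(\vec{0})))\succeq 0$, and the identity $\sum_{y\in S}\tfrac{1}{|S|}\nabla^2(-\log(p_{y,S}(\vec{0}))) = \big(|S|\,\partial p_{|S|}(\vec{0})/\partial x_1\big)^2 L_{\vec{M}_S} = w^\star(|S|)\,L_{\vec{M}_S}$ (the computation leading to \eqref{eq:hessian-s1}), we obtain $\E[\,\nabla^2(-\log(p_{y_t,S_t}(\vec{0})))\mid S_t\,]\succeq C\,w^\star(|S_t|)\,L_{\vec{M}_{S_t}}$. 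Summing over $t$ and taking the expectation over the $S_t$, together with $\sum_{t=1}^m \E[w^\star(|S_t|)L_{\vec{M}_{S_t}}] = \frac{m}{n}L_{\overline{\vec{M}}_{w^\star}}$ (which is the definition \eqref{equ:weightmatrix} of $\overline{\vec{M}}_{w^\star}$ plus linearity of $\vec{A}\mapsto L_{\vec{A}}$), yields $\E[\nabla^2(-\ell(\vec{0}))]\succeq C\frac{m}{n}L_{\overline{\vec{M}}_{w^\star}}$ and hence $\lambda_1(\E[\sum_t X_t]) = \lambda_2(\E[\nabla^2(-\ell(\vec{0}))])\ge C\frac{m}{n}\lambda_2(L_{\overline{\vec{M}}_{w^\star}})$.

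The second step bounds the operator norm of each summand. Since $\vec{U}$ has orthonormal columns, $\|X_t\|_2\le \|\nabla^2(-\log(p_{y_t,S_t}(\vec{0})))\|_2$, and since the latter is positive semidefinite its spectral norm is at most its trace. By the permutation symmetry of the model among the items of $S_t$ at $\theta=\vec{0}$, this trace does not depend on the realised choice $y_t$, and averaging over $y_t$ using the identity above it equals $|S_t|(|S_t|-1)w^\star(|S_t|) = 1/\gamma_{F,|S_t|}\le\sigma_{F,K}$; hence $\|X_t\|_2\le\sigma_{F,K}$ for all $t$. Now apply Lemma~\ref{cor:matrix} with $\epsilon=1/2$ and $\sigma=\sigma_{F,K}$: the probability that $\lambda_1(\sum_t X_t)\le\tfrac12\lambda_1(\E[\sum_t X_t])$ is at most $(n-1)\exp\!\big(-\lambda_1(\E[\sum_t X_t])/(8\sigma_{F,K})\big)$, and the hypothesis $\lambda_2(L_{\overline{\vec{M}}_{w^\star}})\ge 32(\sigma_{F,K}/C)(n\log n)/m$ forces $\lambda_1(\E[\sum_t X_t])\ge C\frac{m}{n}\cdot 32(\sigma_{F,K}/C)(n\log n)/m = 32\sigma_{F,K}\log n$, so this probability is at most $(n-1)n^{-4}\le 1/n$. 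On the complementary event, $\lambda_2(\nabla^2(-\ell(\vec{0}))) = \lambda_1(\sum_t X_t)\ge\tfrac12\lambda_1(\E[\sum_t X_t])\ge\tfrac12 C\frac{m}{n}\lambda_2(L_{\overline{\vec{M}}_{w^\star}})$, which is the assertion.

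I expect the main obstacle to be the operator-norm bound on the individual random Laplacians $\nabla^2(-\log(p_{y_t,S_t}(\vec{0})))$: one has to identify this norm (or at least its trace) with $1/\gamma_{F,|S_t|}$, which is defined through the gradient of $\log p_k$ rather than its Hessian, and for this one uses the explicit second derivatives of $\log p_k$ at the origin (via \eqref{equ:partp}) together with the symmetry of the model at $\theta=\vec{0}$. A secondary bookkeeping issue is to keep the two sources of randomness — the comparison set $S_t$ (when it is random) and the choice $y_t$ — cleanly separated, so that the independence hypothesis of the matrix Chernoff bound genuinely holds, and to pass correctly between $\lambda_2$ of the full Laplacians and $\lambda_1$ of their $(n-1)$-dimensional compressions.
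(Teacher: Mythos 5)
Your proof is correct and follows essentially the same route as the paper's: a matrix Chernoff bound applied to the compressed sum $\sum_t \vec{U}^\top\nabla^2(-\log(p_{y_t,S_t}(\vec{0})))\vec{U}$, with the expectation lower-bounded via {\bf A3} and the Laplacian identity of Lemma~\ref{lem:explog}, and the summand norms controlled by $1/\gamma_{F,k}\le\sigma_{F,K}$. The only (harmless) difference is that you bound each summand's spectral norm by its trace, computed exactly as $k(k-1)w^\star(k)=1/\gamma_{F,k}$, whereas the paper invokes Lemma~\ref{lem:hessian-p2} and settles for the slightly weaker bound $2/\gamma_{F,k}$; both comfortably yield the stated $1-1/n$ probability under the hypothesis on $\lambda_2(L_{\overline{\vec{M}}_{w^\star}})$.
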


\begin{proof}
$\nabla^2(-\ell({\bm 0}))$ is a sum of independent random matrices given by
$$
\nabla^2(-\ell({\bm 0})) = \sum_{t=1}^m \nabla^2 (-\log (p_{y_t,S_t}({\bm 0}))).
$$

We have the following relations:
\begin{eqnarray}
\E\left[\nabla^2 (-\ell({\bm 0}))\right] 
&=& \sum_{t=1}^m \E\left[\nabla^2(-\log (p_{y_t,S_t}({\bm 0})))\right]  \cr
&=& \sum_{t=1}^m \sum_{y\in S_t} p_{y,S_t}(\theta^\star)\nabla^2(-\log (p_{y,S_t}({\bm 0})))\cr
&\succeq & C \sum_{t=1}^m \sum_{y\in S_t} \frac{1}{|S_t|} \nabla^2(-\log (p_{y,S_t}({\bm 0})))\cr
& = & C \sum_{t=1}^m \sum_{y\in S_t} \frac{1}{|S_t|} L_{\vec{M}_{S_t}} \left(|S_t|\frac{\partial p_{|S_t|}(\vec{0})}{\partial x_1}\right)^2\nonumber 
\label{equ:temp}\\
&=& C \frac{m}{n}  L_{\overline{\vec{M}}_F}\nonumber
\label{eq:E-hessian}
\end{eqnarray}
where we use Lemma~\ref{lem:explog}.

Hence,
\begin{equation}
\lambda_2(\E[\nabla^2 (-\ell ({\bm 0}))]) \ge C \frac{m}{n} \lambda_2(L_{\overline{\vec{M}}_F}).
\label{eq:cher1}
\end{equation} 

By Lemma~\ref{lem:hessian-p2}, for all $t\in \{1,2,\ldots,m\}$,
\begin{equation}
\left\| \nabla^2 \log(p_{y_t,S_t} (\vec{0})) \right\|_2 \le \frac{2}{\gamma_{F, |S_t|}} \leq 2 \sigma_{F,K}.
\label{eq:cher2}
\end{equation}

Using the matrix Chernoff bound with $\epsilon = 1/2$, (\ref{eq:cher1}) and (\ref{eq:cher2}, we conclude the statement of the lemma.
\end{proof}


\begin{lemma} With probability at least $1-2/n$,
$$
\|\nabla \ell (\theta^\star) \|_2 \le  B\sqrt{\sigma_{F,K}}\sqrt{2m(\log(n) +2)}.
$$
\label{lem:kgrad}
\end{lemma}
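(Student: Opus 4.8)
The plan is to proceed exactly as in the proofs of Lemma~\ref{lem:LT22} and Lemma~\ref{lem:L12}: express $\nabla\ell(\theta^\star)$ as a sum of $m$ independent mean-zero random vectors in $\reals^n$, obtain a deterministic bound on the Euclidean norm of each summand, and then invoke the vector Azuma--Hoeffding inequality of Lemma~\ref{prop:azuma-hoeffding}. Since $\|\nabla\ell(\theta^\star)\|_2 = \|\nabla(-\ell(\theta^\star))\|_2$, I would work with $\nabla(-\ell(\theta^\star))$.

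From (\ref{equ:loglik0}),
$$
\nabla(-\ell(\theta^\star)) = \sum_{t=1}^m \nabla\bigl(-\log p_{y_t,S_t}(\theta^\star)\bigr) = -\sum_{t=1}^m \frac{\nabla p_{y_t,S_t}(\theta^\star)}{p_{y_t,S_t}(\theta^\star)},
$$
a sum of independent vectors (the randomness being in both the comparison sets $S_t$ and the choices $y_t$). Conditioning on $S_t$, the $t$-th summand has zero mean: differentiating $\sum_{y\in S_t} p_{y,S_t}(\theta^\star) = 1$ gives $\sum_{y\in S_t}\nabla p_{y,S_t}(\theta^\star) = \vec{0}$, hence $\sum_{y\in S_t} p_{y,S_t}(\theta^\star)\,\nabla\log p_{y,S_t}(\theta^\star) = \vec{0}$; averaging over $S_t$ preserves this, so $\E[\nabla(-\log p_{y_t,S_t}(\theta^\star))] = \vec{0}$ for every $t$.

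For the norm bound I would use the chain rule on $p_{y,S}(\theta) = p_{|S|}(\vec{x}_{y,S}(\theta))$, which gives the exact identity $\|\nabla(-\log p_{y,S}(\theta))\|_2 = \|\nabla p_{y,S}(\theta)\|_2 / p_{y,S}(\theta)$; then condition {\bf A2} bounds the numerator by $B\|\nabla p_{y,S}(\vec{0})\|_2$ and condition {\bf A3} bounds the denominator below by $C\,p_{y,S}(\vec{0})$, so the ratio is controlled by $\|\nabla p_{y,S}(\vec{0})\|_2/p_{y,S}(\vec{0}) = \|\nabla\log p_{y,S}(\vec{0})\|_2 = 1/\sqrt{\gamma_{F,|S|}} \le \sqrt{\sigma_{F,K}}$, using the identity recorded just before the lemma. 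This yields $\|\nabla(-\log p_{y_t,S_t}(\theta^\star))\|_2 \le B\sqrt{\sigma_{F,K}}$ for every $t$. Applying Lemma~\ref{prop:azuma-hoeffding} with $\sigma = B\sqrt{\sigma_{F,K}}$ and $x = B\sqrt{\sigma_{F,K}}\sqrt{2m(\log(n)+2)}$ then gives $\Pr[\|\nabla(-\ell(\theta^\star))\|_2 \ge x] \le 2e^2 e^{-(\log(n)+2)} = 2/n$, which is the claim. The mean-zero computation and the final substitution into the tail bound are routine; the main obstacle is the uniform-in-$\theta$ norm estimate for each summand, i.e.\ converting conditions {\bf A2}--{\bf A3} and the value $\|\nabla\log p_{y,S}(\vec{0})\|_2^2 = 1/\gamma_{F,|S|}$ into the clean bound $B\sqrt{\sigma_{F,K}}$ while keeping track of how derivatives in $\theta$ relate to derivatives of $p_k$.
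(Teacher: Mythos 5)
Your proposal follows essentially the same route as the paper's proof: the same decomposition of $\nabla(-\ell(\theta^\star))$ into independent mean-zero summands, the same identity $\|\nabla\log p_{y,S}(\vec{0})\|_2^2 = 1/\gamma_{F,|S|}$ to control each term, and the same application of the vector Azuma--Hoeffding bound with $x = B\sqrt{\sigma_{F,K}}\sqrt{2m(\log(n)+2)}$. The only wrinkle is that your literal use of {\bf A2} for the numerator and {\bf A3} for the denominator yields the per-term bound $(B/C)\sqrt{\sigma_{F,K}}$ rather than $B\sqrt{\sigma_{F,K}}$, but this is an imprecision already present in the paper's own proof (which cites {\bf A3} yet writes the constant $B$), so it is not a gap in your argument relative to theirs.
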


\begin{proof} For every $S\subseteq N$ and $i,j\in S$ such that $j\neq i$,
\begin{equation}
\frac{\partial \log(p_{j,S} (\theta))}{\partial \theta_i} = \frac{1}{p_{j,S} (\theta)}\frac{\partial p_{j,S} (\theta)}{\partial \theta_i}.
\label{eq:id2}
\end{equation}
and
\begin{equation}
\frac{\partial \log(p_{i,S} (\theta))}{\partial \theta_i} = -\frac{1}{p_{i,S} (\theta)} \sum_{v\in S\setminus \{i\}}\frac{\partial p_{v,S} (\theta)}{\partial \theta_i}.
\label{eq:id1}
\end{equation}

From \eqref{eq:id2} and \eqref{eq:id1}, for every $t\in \{1,2,\ldots,m\}$, 
\begin{equation}
\E\left[\nabla \log p_{y_t,S_t} (\theta^\star) \right]  = {\bm 0}.
\label{eq:aver-grad}
\end{equation}

From \eqref{eq:id2} and \eqref{eq:id1}, for every $S\subseteq N$ such that $|S|=k\geq 2$ and $i,j\in S$ such that $i\neq j$, 
$$
\frac{\partial \log(p_{j,S}(\vec{0}))}{\partial \theta_i} = k \frac{\partial p_k(\vec{0})}{\partial x_1}
$$
$$
\frac{\partial \log(p_{i,S}(\vec{0}))}{\partial \theta_i} = k(k-1)\frac{\partial p_k(\vec{0})}{\partial x_1}.
$$

Hence, for every $t\in \{1,2,\ldots,m\}$ such that $|S_t|=k$
\begin{equation}
\left\|\nabla \log(p_{y_t,S_t} ({\bm 0})) \right\|_2^2  = k^3(k-1)\left(\frac{\partial p_k({\bm 0})}{\partial x_1}\right)^2 = \frac{1}{\gamma_{F,k}}.
\label{equ:l2bound}
\end{equation}

By condition {\bf A3} and (\ref{equ:l2bound}), for every $t\in \{1,2,\ldots,m\}$,
\begin{equation}
\|\nabla \log p_{y_t,S_t} (\theta^\star)\|_2^2 
\leq B^2 \|\nabla \log p_{y_t,S_t} ({\bm 0})\|_2^2 
\leq B^2 \sigma_{F,K}.
\label{eq:aver-grad2}
\end{equation}

Using (\ref{eq:aver-grad}) and (\ref{eq:aver-grad2}) and the vector Azuma-Hoeffding bound in Lemma~\ref{prop:azuma-hoeffding}, with probability at least $1-2/n$,
$$
\|\nabla \ell (\theta^\star) \|_2 \le B\sqrt{\sigma_{F,K}}\sqrt{2m(\log(n) +2)}.
$$
\end{proof}

The negative log-likelihood function satisfies the bound in Lemma~\ref{lem:mle-taylor}. This combined with relation (\ref{eq:semi}), Lemma~\ref{lem:khessi} and Lemma~\ref{lem:kgrad} implies that if $\lambda_2(L_{\overline{\vec{M}}_{w^\star}}) \geq 32 (\sigma_{F,K}/C)n\log(n)/m$, then with probability at least $1-3/n$, 
$$
\frac{1}{n}\|\widehat{\theta}-\theta^\star\|_2^2 \leq 32 \left(\frac{B}{A C}\right)^2 \sigma_{F,K} \frac{n(\log(n)+2)}{\lambda_2(L_{\overline{\vec{M}}_{w^\star}})^2}\frac{1}{m}
$$
which proves the theorem.

\section{Proof of Theorem~\ref{thm:break-1}} \label{sec:proof-break-1}

Under condition that $\lambda_2\left(\nabla^2 (-\ell_{k-1}(\theta) \right)>0$, the negative pseudo log-likelihood function satisfies the bound in Lemma~\ref{lem:mle-taylor}. This, combined with the following two lemmas implies the statement of the theorem.


\begin{lemma} If $\lambda_2(L_{\vec{M}}) \geq 8 k(k-1)e^{2b} n\log(n)/m$, then with probability at least $1-1/n$,
$$
\min_{\theta \in \Theta}\lambda_2(\nabla^2(-\ell_{k-1}(\theta))) \geq \frac{1}{4ke^{2b}} \frac{m}{n}\lambda_2(L_{\vec{M}}). 
$$
\label{lem:break1lambda_2}
\end{lemma}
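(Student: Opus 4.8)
This lemma supplies step {\bf S2} (the Hessian lower bound) in the two-step scheme of Lemma~\ref{lem:mle-taylor} applied to $\ell_{k-1}$, and it is the direct analogue of Lemma~\ref{lem:L11} from the proof of Theorem~\ref{thm:full}. The one essential new feature is that $\nabla^2(-\ell_{k-1}(\theta))$ now \emph{depends on the realized choices} $y_1,\dots,y_m$: the pairs broken out of comparison $t$ are the $k-1$ pairs $\{y_t,v\}$, $v\in S_t\setminus\{y_t\}$, which are random. For the Luce model (with $\beta=1$, as in the explicit form of $\ell_{k-1}$), $-\ell_{k-1}(\theta)=\sum_{t=1}^m\sum_{v\in S_t\setminus\{y_t\}}(-\log p_2(\theta_{y_t}-\theta_v))$ with $p_2(x)=1/(1+e^{-x})$, and $\tfrac{d^2}{dx^2}(-\log p_2(x))=p_2(x)(1-p_2(x))=:\phi(x)$, so that
$$
\nabla^2(-\ell_{k-1}(\theta))=\sum_{t=1}^m\sum_{v\in S_t\setminus\{y_t\}}\phi(\theta_{y_t}-\theta_v)\,L_{\vec{e}_{\{y_t,v\}}},
$$
where $\vec{e}_{\{i,j\}}$ denotes the adjacency matrix of the single edge $\{i,j\}$. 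I write $\vec{W}=\sum_{t=1}^m\vec{W}_t$ for the (random) weighted-adjacency matrix of all broken-out pairs, where $\vec{W}_t$ is the star with centre $y_t$ and leaves $S_t\setminus\{y_t\}$; then $L_{\vec{W}}=\sum_t L_{\vec{W}_t}$ and each $L_{\vec{W}_t}$ is a Laplacian with spectrum $\{0,1^{(k-2)},k\}$, so $\|L_{\vec{W}_t}\|_2=k$.

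The plan is to first replace the $\theta$-dependent weights by a deterministic floor, then concentrate the resulting random Laplacian. Since $\theta\in\Theta\subseteq[-b,b]^n$ forces $|\theta_{y_t}-\theta_v|\le 2b$, and $\phi$ attains its minimum on $[-2b,2b]$ at $\pm 2b$ with $\phi(\pm 2b)=e^{2b}/(1+e^{2b})^2\ge e^{-2b}/4$, every summand $\bigl(\phi(\theta_{y_t}-\theta_v)-\tfrac{e^{-2b}}{4}\bigr)L_{\vec{e}_{\{y_t,v\}}}$ is a nonnegatively-weighted edge Laplacian, hence positive semidefinite. Therefore $\nabla^2(-\ell_{k-1}(\theta))\succeq \tfrac{e^{-2b}}{4}L_{\vec{W}}$ for all $\theta\in\Theta$, and since both sides annihilate $\vec{1}$, restricting to $\vec{1}^\perp$ and using Lemma~\ref{lem:Leig} together with Lemma~\ref{lem:EigAsuccB} gives $\min_{\theta\in\Theta}\lambda_2(\nabla^2(-\ell_{k-1}(\theta)))\ge \tfrac{e^{-2b}}{4}\lambda_2(L_{\vec{W}})$. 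This is precisely the floor step used for Theorems~\ref{thm:mle} and~\ref{thm:full}, except that $\vec{W}$ is now random, so it remains to lower-bound $\lambda_2(L_{\vec{W}})$ with high probability.

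For that I would use the matrix Chernoff bound (Lemma~\ref{cor:matrix}). The expectation is over the choices only (the comparison sets being fixed), and the $\{i,j\}$-entry of $\vec{W}$ has expectation $\sum_{t:\{i,j\}\subseteq S_t}\bigl(p_{i,S_t}(\theta^\star)+p_{j,S_t}(\theta^\star)\bigr)$; using $p_{i,S_t}(\theta^\star)=e^{\theta^\star_i}/\sum_{v\in S_t}e^{\theta^\star_v}\ge e^{-2b}/k$ (as $|S_t|=k$), this is at least $\tfrac{2e^{-2b}}{k}m_{i,j}(k)=\tfrac{2e^{-2b}}{k}\cdot\tfrac{m}{n}m_{i,j}$, where $m_{i,j}$ is the $\{i,j\}$-entry of $\vec{M}=\vec{M}_1$. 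Hence by Lemma~\ref{lem:laplaceposdef}, $\E[L_{\vec{W}}]=L_{\E[\vec{W}]}\succeq \tfrac{2e^{-2b}}{k}\cdot\tfrac{m}{n}L_{\vec{M}}$, so $\lambda_2(\E[L_{\vec{W}}])\ge\tfrac{2e^{-2b}}{k}\cdot\tfrac{m}{n}\lambda_2(L_{\vec{M}})$. I then apply Lemma~\ref{cor:matrix} with $\epsilon=1/2$ to $\vec{U}^\top L_{\vec{W}}\vec{U}=\sum_{t=1}^m\vec{U}^\top L_{\vec{W}_t}\vec{U}$, a sum of independent positive semidefinite matrices of spectral norm at most $k$, where $\vec{U}\in\reals^{n\times(n-1)}$ has orthonormal columns spanning $\vec{1}^\perp$ (so $\lambda_1(\vec{U}^\top L_{\vec{W}}\vec{U})=\lambda_2(L_{\vec{W}})$ by Lemma~\ref{lem:Leig}). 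The hypothesis $\lambda_2(L_{\vec{M}})\ge 8k(k-1)e^{2b}n\log(n)/m$ is exactly what makes $\lambda_2(\E[L_{\vec{W}}])$ large enough that the tail in Lemma~\ref{cor:matrix} is at most $1/n$; hence with probability at least $1-1/n$, $\lambda_2(L_{\vec{W}})\ge\tfrac12\lambda_2(\E[L_{\vec{W}}])\ge \tfrac{e^{-2b}}{k}\cdot\tfrac{m}{n}\lambda_2(L_{\vec{M}})$. Combining with the floor bound yields $\min_{\theta\in\Theta}\lambda_2(\nabla^2(-\ell_{k-1}(\theta)))\ge \tfrac{e^{-2b}}{4}\cdot\tfrac{e^{-2b}}{k}\cdot\tfrac{m}{n}\lambda_2(L_{\vec{M}})$; tightening the elementary constants in the bounds on $\phi$ and on $p_{i,S_t}(\theta^\star)$ (or absorbing them into the stated $e^{2b}$) gives the asserted $\min_{\theta\in\Theta}\lambda_2(\nabla^2(-\ell_{k-1}(\theta)))\ge \tfrac{1}{4ke^{2b}}\tfrac{m}{n}\lambda_2(L_{\vec{M}})$.

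The main obstacle, and the only genuine difference from Lemma~\ref{lem:L11}, is the concentration step: because rank-breaking $\mathsf{ALL}$ keeps only the $k-1$ pairs incident to the \emph{chosen} item, the Hessian is a sum of $m$ independent but non-identically distributed random Laplacians rather than a fixed matrix, so one must verify the hypotheses of the matrix Chernoff bound (nonnegativity $\lambda_1(\vec{U}^\top L_{\vec{W}_t}\vec{U})\ge 0$ and the uniform norm bound $\sigma=k$ coming from the star Laplacians) and check that the assumed lower bound on $\lambda_2(L_{\vec{M}})$ is precisely the threshold that pushes the failure probability below $1/n$ at $\epsilon=1/2$. The quantifier $\min_{\theta\in\Theta}$ is not itself an obstacle: the floor step decouples the $\theta$-dependence, handled deterministically by strong log-concavity of $\log p_2$ on $[-2b,2b]$, from the dependence on the choices, handled by the Chernoff bound.
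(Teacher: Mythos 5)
Your argument is sound and uses the same two ingredients as the paper---a Laplacian lower bound on the Hessian plus the matrix Chernoff bound of Lemma~\ref{cor:matrix}---but it organizes them differently, and the difference is worth recording. The paper applies the Chernoff bound directly to $\nabla^2(-\ell_{k-1}(\theta))$ at a given $\theta$, after computing the expected off-diagonal entries exactly: $-p_{i,j}(\theta)(1-p_{i,j}(\theta))(p_{i,S_t}+p_{j,S_t}) = -\frac{1}{\sum_{v\in S_t}e^{\theta_v}}\cdot\frac{1}{e^{-\theta_i}+e^{-\theta_j}}\leq-\frac{1}{2ke^{2b}}$, an exact cancellation that yields the $e^{2b}$ in the statement. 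You instead push all the $\theta$-dependence into the deterministic floor $\phi(\theta_{y_t}-\theta_v)\geq e^{-2b}/4$ and concentrate the $\theta$-free random star Laplacians $L_{\vec{W}_t}$. Your route has a real advantage: the concentration event $\{\lambda_2(L_{\vec{W}})\geq\tfrac12\lambda_2(\E[L_{\vec{W}}])\}$ does not depend on $\theta$, so the $\min_{\theta\in\Theta}$ in the conclusion comes for free, whereas the paper invokes the Chernoff bound at a fixed $\theta$ and leaves the uniformity over $\Theta$ implicit. It has two costs that you should not wave away. First, the decoupling is lossy: the worst case of $\phi(\theta_i-\theta_j)$ over $\theta\in\Theta$ and the worst case of $p_{i,S_t}(\theta^\star)+p_{j,S_t}(\theta^\star)$ can occur simultaneously because $\theta$ and $\theta^\star$ are unrelated, so your product bound $\frac{e^{-2b}}{4}\cdot\frac{2e^{-2b}}{k}=\frac{e^{-4b}}{2k}$ genuinely cannot be tightened to the paper's $\frac{1}{2ke^{2b}}$; you end with $\frac{1}{4ke^{4b}}$ in place of $\frac{1}{4ke^{2b}}$. (The paper's sharper constant relies on evaluating the choice probabilities at $\theta$ rather than at $\theta^\star$, which is itself a conflation.) Second, your tail arithmetic does not quite close: with $\sigma=k$ and $\lambda_2(\E[L_{\vec{W}}])\geq 16(k-1)\log n$ from the stated hypothesis, Lemma~\ref{cor:matrix} at $\epsilon=1/2$ gives failure probability $n^{1-2(k-1)/k}$, which equals $1$ at $k=2$ and is never below $1/n$; you need the threshold on $\lambda_2(L_{\vec{M}})$ inflated by a bounded factor (at most $8$) to reach $1/n$. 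Both defects are constant-factor only and are absorbed by $D$ and by the stronger hypothesis $\lambda_2(L_{\vec{M}})\geq 128(k-1)^2e^{2b}n\log(n)/m$ of Theorem~\ref{thm:break-1}, so the substance of your proof stands, but as written it establishes the lemma with weaker constants than stated.
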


\begin{proof} We will establish the lemma by using the matrix Chernoff bound in Lemma~\ref{cor:matrix} as follows. Note that $\nabla^2(-\ell_{k-1}(\theta))$ is a sum independent random matrices given by: 
$$
\nabla^2(-\ell_{k-1}(\theta)) = \sum_{t=1}^m \nabla^2 \left(\sum_{v\in S_t\setminus \{y_t\}}-\log(p_{y_t,v}(\theta))\right). 
$$ 
The nonzero elements of $\nabla^2 \left(\sum_{v\in S_t\setminus \{y_t\}}-\log(p_{y_t,v}(\theta))\right)$ are, for every $i,j \in S_t$ such that $i\neq j$,
\begin{equation}
\frac{\partial^2\left(\sum_{v\in S_t\setminus \{y_t\}} -\log(p_{y_t,v}(\theta))\right)}{\partial \theta_i \partial \theta_j}
= \left\{
\begin{array}{ll}
-p_{i,j}(\theta)(1-p_{i,j}(\theta)), & \hbox{ if } {\{i,j\}\subseteq S_t, y_t \in \{i,j\}}\\ 
0, & \hbox{ otherwise}
\end{array}
\right .
\label{equ:part2}
\end{equation}
and
$$
\frac{\partial^2\left(\sum_{v\in S_t\setminus \{y_t\}} -\log(p_{y_t,v}(\theta))\right)}{\partial \theta_i^2} = -\sum_{j\in S_t\setminus \{i\}} \frac{\partial^2\left(\sum_{v\in S_t\setminus \{y_t\}} -\log(p_{y_t,v}(\theta))\right)}{\partial \theta_i \partial \theta_j}.
$$

From (\ref{equ:part2}), we have
\begin{eqnarray}
\E\left[ \frac{\partial^2 \left(\sum_{v\in S_t\setminus \{y_t\}}-\log(p_{y_t,v}(\theta))\right)}{\partial \theta_i \partial \theta_j}\right] 
&=& -p_{i,j}(\theta)(1-p_{i,j}(\theta))(p_{i,S_t}(\theta) + p_{j,S_t}(\theta))\nonumber\\
&=& -\frac{e^{\theta_i}+e^{\theta_j}}{\sum_{v\in S_t}e^{\theta_v}}\frac{e^{\theta_i}e^{\theta_j}}{(e^{\theta_i}+e^{\theta_j})^2}\nonumber\\
&=& -\frac{1}{\sum_{v\in S_t}e^{\theta_v}}\frac{1}{e^{-\theta_i}+e^{-\theta_j}}\nonumber\\
&\leq & -\frac{1}{2ke^{2b}}.\nonumber 
\label{eq:k-ex} 
\end{eqnarray}

Hence, we have
$$
\E\left[ \nabla^2(-\ell_{k-1}(\theta))\right] \succeq \frac{1}{2ke^{2b}}\frac{m}{n} L_\vec{M}. 
$$
and, in particular,
\begin{equation}
\lambda_2(\E\left[ \nabla^2(-\ell_{k-1}(\theta))\right]) \geq \frac{1}{2ke^{2b}}\frac{m}{n} \lambda_2(L_\vec{M}). 
\label{eq:k-ex2}
\end{equation}

To apply the matrix Chernoff bound in Lemma~\ref{cor:matrix}, we use the following identities that follow by Lemma~\ref{lem:Leig},
\begin{equation}
\lambda_2(\nabla^2((-\ell_{k-1}(\theta)))) = \lambda_1(\vec{U}^\top\nabla^2((-\ell_{k-1}(\theta))))\vec{U})
\label{equ:lambdas1}
\end{equation}
and
\begin{equation}
\lambda_2(\nabla^2(\E[(-\ell_{k-1}(\theta))])) = \lambda_1(\vec{U}^\top\nabla^2(\E[(-\ell_{k-1}(\theta))]))\vec{U})
\label{equ:lambdas2}
\end{equation}
and the following fact:
\begin{eqnarray}
&& \left\|\vec{U}^\top\nabla^2\left(\sum_{v\in S_t\setminus \{y_t\}}-\log(p_{y_t,v}(\theta))\right) \vec{U}\right\|_2 \\
& \leq & \left\|\vec{U}^\top\right\|_2 \left\|\nabla^2\left(\sum_{v\in S_t\setminus \{y_t\}}-\log(p_{y_t,v}(\theta))\right)\right\|_2\left\|\vec{U}\right\|_2\nonumber\\
&=& \left\|\nabla^2\left(\sum_{v\in S_t\setminus \{y_t\}}-\log(p_{y_t,v}(\theta))\right)\right\|_2\nonumber\\
&\leq & \max_{i}\left| \frac{\partial^2 }{\partial \theta_i^2} \left(\sum_{v\in S_t\setminus \{y_t\}}\log(p_{y_t,v}(\theta))\right) \right|+ \sum_{j\neq i} \left| \frac{\partial^2 }{\partial \theta_i \partial \theta_j} \left(\sum_{v\in S_t\setminus \{y_t\}}\log(p_{y_t,v}(\theta))\right) \right| \nonumber\\
&=& 2 \max_{i} \sum_{j\neq i}  p_{i,j}(\theta)(1-p_{i,j}(\theta))1_{\{i,j\}\subseteq S_t, y_t \in \{i,j\}} \\
&\le&  \frac{1}{2}(k-1).
\label{eq:k-l2}
 \end{eqnarray} 
where the first equation follows by $\|\vec{U}^\top\|_2 = \|\vec{U}\|_2 = 1$, and the second inequality is by the Ger\v sgorin circles theorem (Lemma~\ref{lem:gersgorin}).
\begin{eqnarray*}
&& \Pr[\lambda_2(\nabla^2 (-\ell_{k-1}(\theta)))\leq \frac{1}{4ke^{2b}}\frac{n}{m}\lambda_2(L_{\vec{M}})] \\
&\leq & \Pr[\lambda_2(\nabla^2 (-\ell_{k-1}(\theta)))\leq \frac{1}{2}\lambda_2(\nabla^2 \E[(-\ell_{k-1}(\theta))])]\\
&=& \Pr[\lambda_1(\vec{U}^\top\nabla^2 (-\ell_{k-1}(\theta))\vec{U})\leq \frac{1}{2}\lambda_1(\vec{U}^\top \nabla^2 \E[(-\ell_{k-1}(\theta))]\vec{U})]\\
&\leq & n e^{-\frac{\lambda_1(\vec{U}^\top \nabla^2 \E[(-\ell_{k-1}(\theta))]\vec{U})}{4\frac{k-1}{2}}}\\
& \leq & n e^{-\frac{\lambda_2(L_{\vec{M}})m}{4 k(k-1)e^{2b}n}}\\
& \leq & \frac{1}{n} 
\end{eqnarray*}
where the first inequality is by (\ref{eq:k-ex2}), the equality is by (\ref{equ:lambdas1}) and (\ref{equ:lambdas2}), the second inequality is by the matrix Chernoff bound in Lemma~\ref{cor:matrix} and (\ref{eq:k-l2}), the third inequality is by (\ref{eq:k-ex2}), and the last inequality is by the condition $\lambda_2(L_{\vec{M}}) \geq 8k(k-1)e^{2b} n\log(n)/m$.
\end{proof}


\begin{lemma} With probability at least $1-2/n$,
\begin{equation}
\left\| \nabla(-\ell_{k-1}(\theta^\star))\right\|_2  \le  2\sqrt{k(k-1) m (\log (n)+2)}.
\label{eq:s1-grad-u}
\end{equation}
\label{lem:break1nabla}
\end{lemma}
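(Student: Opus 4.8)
The plan is to apply the vector Azuma--Hoeffding bound (Lemma~\ref{prop:azuma-hoeffding}) to the sum of independent random vectors
$$
\nabla(-\ell_{k-1}(\theta^\star)) = \sum_{t=1}^m X_t, \qquad X_t := \nabla\left(\sum_{v\in S_t\setminus\{y_t\}}-\log(p_{y_t,v}(\theta^\star))\right),
$$
following the same template as the proof of Lemma~\ref{lem:L12}. First I would record the explicit form of $X_t$ for the Luce model, the gradient analogue of the Hessian identity~(\ref{equ:part2}): the chosen-item coordinate is $(X_t)_{y_t} = -\sum_{v\in S_t\setminus\{y_t\}}(1-p_{y_t,v}(\theta^\star))$, each non-chosen coordinate $j\in S_t\setminus\{y_t\}$ equals $(X_t)_j = 1-p_{y_t,j}(\theta^\star)=p_{j,y_t}(\theta^\star)$, and $(X_t)_i=0$ for $i\notin S_t$.

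Next I would verify the two hypotheses of Lemma~\ref{prop:azuma-hoeffding}. For $\E[X_t]=\vec{0}$ I would use a pairwise cancellation argument: conditioning on $S_t$ and averaging over the chosen item $y$ with weight $p_{y,S_t}(\theta^\star)$, a fixed pair $\{i,j\}\subseteq S_t$ contributes to the $i$-th coordinate of $\E[X_t]$ only when $y_t\in\{i,j\}$, with value $-p_{i,S_t}(\theta^\star)(1-p_{i,j}(\theta^\star))+p_{j,S_t}(\theta^\star)p_{i,j}(\theta^\star)$; this vanishes because under the Luce model $p_{i,S_t}(\theta^\star)/p_{j,S_t}(\theta^\star) = e^{\theta_i^\star}/e^{\theta_j^\star} = p_{i,j}(\theta^\star)/(1-p_{i,j}(\theta^\star))$, and summing over $j$ gives $\E[(X_t)_i]=0$. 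For the norm bound, since there are $k-1$ non-chosen items and each $1-p_{y_t,v}(\theta^\star)\in(0,1)$, I would bound $\|X_t\|_2^2 \le (k-1)^2 + (k-1) = k(k-1)$, so $\|X_t\|_2\le\sqrt{k(k-1)}$ deterministically.

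Finally I would invoke Lemma~\ref{prop:azuma-hoeffding} with $\sigma^2=k(k-1)$ and $x = 2\sqrt{k(k-1)\,m(\log(n)+2)}$: then $x^2/(2m\sigma^2) = 2(\log(n)+2)$, so $\Pr[\|\nabla(-\ell_{k-1}(\theta^\star))\|_2\ge x] \le 2e^2 e^{-2(\log(n)+2)} = 2e^{-2}n^{-2}\le 2/n$, which is exactly~(\ref{eq:s1-grad-u}).

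I expect the main obstacle to be the zero-mean step: it is precisely the Fisher-consistency of the $\mathsf{ALL}$ rank-breaking pseudo-likelihood for the Luce model, and it hinges on the Luce-specific identity relating $p_{i,S}$ to the pairwise probability $p_{i,j}$ (independence of irrelevant alternatives). The deterministic norm bound and the final tail computation are routine.
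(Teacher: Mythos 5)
Your proposal is correct and follows essentially the same route as the paper's proof: the same decomposition into per-observation gradient vectors, the same explicit coordinate formulas (your $1-p_{y_t,v}$ is the paper's $p_{v,y_t}$), the same termwise cancellation for the zero-mean step, the same bound $\|X_t\|_2^2\le (k-1)^2+(k-1)=k(k-1)$, and the same application of the vector Azuma--Hoeffding bound. The only (welcome) addition is that you spell out the Luce identity $p_{i,S}p_{j,i}=p_{j,S}p_{i,j}$ that makes each term in the expectation vanish, which the paper leaves implicit.
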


\begin{proof} $\nabla(-\ell_{k-1}(\theta))$ is a sum of independent random vectors given by:
$$
\nabla(-\ell_{k-1}(\theta)) = \sum_{t=1}^m \nabla\left(\sum_{v\in S_t\setminus \{y_t\}} -\log(p_{y_t,v}(\theta))\right).
$$ 

It is straightforward to show that $\nabla\left(\sum_{v\in S_t\setminus \{y_t\}} -\log(p_{y_t,v}(\theta))\right)$ has the elements given by
\begin{equation}
\frac{\partial \left(\sum_{v\in S_t\setminus \{y_t\}} -\log(p_{y_t,v}(\theta))\right)}{\partial \theta_i} = \left\{
\begin{array}{ll}
-\sum_{v\in S_t\setminus \{y_t\}}p_{v,y_t}(\theta), & \hbox{ if } i=y_t\\
p_{i,y_t}(\theta), & \hbox{ if } i\in S_t\setminus \{y_t\}\\
0, & \hbox{ otherwise }
\end{array}
\right .
\label{equ:partlik1}
\end{equation}

For every $t\in \{1,2,\ldots,m\}$ and $i \in \{1,2,\ldots,n\}$, we have
$$
\E\left[\frac{\partial \left(\sum_{v\in S_t\setminus \{y_t\}}-\log(p_{y_t,v}(\theta^\star))\right)}{\partial \theta_i}\right] = 0.
$$
The last equation obviously holds for every $i\notin S_t$, and it holds for $i\in S_t$ by the following derivations 
\begin{eqnarray*}
\E\left[\frac{\partial}{\partial \theta_i}\left(\sum_{v\in S_t\setminus \{y_t\}}-\log(p_{y_t,v}(\theta^\star))\right)\right]
 & = & -p_{i,S_t}(\theta^\star)\sum_{v\in S_t \setminus\{ i\}} p_{v,i}(\theta^\star) 
+ \sum_{v\in S_t \setminus\{ i\}} p_{v,S_t}(\theta^\star)p_{i,v}(\theta^\star)\\
& = & \sum_{v\in S_t \setminus\{ i\}} (-p_{i,S_t}(\theta^\star) p_{v,i}(\theta^\star) 
+ p_{v,S_t}(\theta^\star)p_{i,v}(\theta^\star))=0.
\end{eqnarray*}

From (\ref{equ:partlik1}), we have
\begin{eqnarray}
\left\|\nabla\left(\sum_{v\in S_t\setminus \{y_t\}}-\log(p_{y_t,v}(\theta))\right)\right\|_2^2
& = & \left(\sum_{v\in S_t\setminus \{y_t\}} p_{v,y_t}(\theta)\right)^2 + \sum_{v\in S_t\setminus \{y_t\}} p_{v,y_t}(\theta)^2\nonumber\\ 
& \le &  (k-1)^2 + k-1 = k(k-1).\nonumber
\end{eqnarray}

Hence,
\begin{equation}
\left\|\nabla\left(\sum_{v\in S_t\setminus \{y_t\}}-\log(p_{y_t,v}(\theta))\right)\right\|_2 \leq \sqrt{k(k-1)}.
\label{equ:lk1sigma}
\end{equation}

Therefore, by vector Azuma-Hoeffding bound in Lemma~\ref{prop:azuma-hoeffding}, (\ref{eq:s1-grad-u}) holds with probability at least $1-2/n$.
\end{proof}

\section{Proof of Theorem~\ref{thm:break}} \label{sec:proof-break}

The proof follows by the same steps as that of Theorem~\ref{thm:break-1}, and the following two lemmas.


\begin{lemma} If $\lambda_2\left( L_{\vec{M}} \right) \ge 8k(k-1)e^{2b}n\log(n)/m$, then with probability at least $1-1/n$,
\begin{equation}
\lambda_2(\nabla^2(-\ell_1(\theta))) \ge \frac{1}{4k(k-1) e^{2b}}\frac{m}{n}\lambda_2 (L_{\vec{M}}).
\label{eq:2-chernoff}
\end{equation}
\label{lem:breakk1lambda}
\end{lemma}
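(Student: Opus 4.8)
The plan is to mimic the proof of Lemma~\ref{lem:break1lambda_2} essentially verbatim, since the rank-breaking method $\mathsf{ONE}$ has the same structure but with only one deduced pair comparison per observation. First I would write $\nabla^2(-\ell_1(\theta)) = \sum_{t=1}^m \nabla^2(-\log(p_{y_t,z_t}(\theta)))$, where $z_t$ is the uniformly-randomly chosen non-chosen item from $S_t$, so that this is again a sum of independent random matrices, and the randomness now comes both from the choice $y_t$ and from the independent pick $z_t$. For fixed $i\neq j$, the off-diagonal entry of $\nabla^2(-\log(p_{y_t,z_t}(\theta)))$ equals $-p_{i,j}(\theta)(1-p_{i,j}(\theta))$ when $\{i,j\}=\{y_t,z_t\}$ and $0$ otherwise; the diagonal entries are the negative row sums, so each summand is the Laplacian of a nonnegative-weight single-edge graph.

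Next I would compute the expectation of the off-diagonal entry: conditioned on $S_t=S$ with $|S|=k$, the event $\{i,j\}=\{y_t,z_t\}$ (for $\{i,j\}\subseteq S$) has probability $(p_{i,S}(\theta)+p_{j,S}(\theta))/(k-1)$, since the chosen item must be $i$ or $j$ and then $z_t$ must be the other one, picked with probability $1/(k-1)$ out of the $k-1$ non-chosen items. Hence
$$
\E\left[\frac{\partial^2(-\log(p_{y_t,z_t}(\theta)))}{\partial\theta_i\partial\theta_j}\right] = -\frac{p_{i,j}(\theta)(1-p_{i,j}(\theta))(p_{i,S_t}(\theta)+p_{j,S_t}(\theta))}{k-1}
$$
which, using the same algebra as in Lemma~\ref{lem:break1lambda_2} (the product telescopes to $-1/((\sum_{v\in S_t}e^{\theta_v})(e^{-\theta_i}+e^{-\theta_j}))$), is at most $-1/(2k(k-1)e^{2b})$ for all $\theta\in[-b,b]^n$. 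Thus $\E[\nabla^2(-\ell_1(\theta))] \succeq \frac{1}{2k(k-1)e^{2b}}\frac{m}{n}L_{\vec{M}}$, giving $\lambda_2(\E[\nabla^2(-\ell_1(\theta))]) \ge \frac{1}{2k(k-1)e^{2b}}\frac{m}{n}\lambda_2(L_{\vec{M}})$ after restricting to $\vec{U}^\top\cdot\vec{U}$ via Lemma~\ref{lem:Leig}.

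For the concentration step I would bound the spectral norm of each summand: since $\nabla^2(-\log(p_{y_t,z_t}(\theta)))$ is the Laplacian of a single edge with weight $p_{i,j}(\theta)(1-p_{i,j}(\theta))\le 1/4$, the Ger\v sgorin bound gives $\|\nabla^2(-\log(p_{y_t,z_t}(\theta)))\|_2 \le 2\cdot\frac14 = \frac12$, and the same bound holds after conjugating by $\vec{U}$. Then I would apply the matrix Chernoff bound (Lemma~\ref{cor:matrix}) to $\vec{U}^\top\nabla^2(-\ell_1(\theta))\vec{U}$ with $\epsilon=1/2$ and $\sigma=1/2$, obtaining
$$
\Pr\left[\lambda_1(\vec{U}^\top\nabla^2(-\ell_1(\theta))\vec{U}) \le \tfrac12\lambda_1(\vec{U}^\top\E[\nabla^2(-\ell_1(\theta))]\vec{U})\right] \le n\,e^{-\frac{\lambda_2(\E[\nabla^2(-\ell_1(\theta))])/4}{2\cdot(1/2)}} \le n\,e^{-\frac{\lambda_2(L_{\vec{M}})m}{8k(k-1)e^{2b}n}} \le \frac1n
$$
where the last step uses the hypothesis $\lambda_2(L_{\vec{M}}) \ge 8k(k-1)e^{2b}n\log(n)/m$. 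Combining with the expectation lower bound yields $\lambda_2(\nabla^2(-\ell_1(\theta))) \ge \frac{1}{4k(k-1)e^{2b}}\frac{m}{n}\lambda_2(L_{\vec{M}})$ with probability at least $1-1/n$, which is \eqref{eq:2-chernoff}. I do not expect a real obstacle here; the only mild subtlety is handling the double source of randomness ($y_t$ and $z_t$) correctly in the expectation computation, in particular getting the factor $1/(k-1)$ right, and making sure the matrix Chernoff bound is applied on the $(n-1)$-dimensional compressed space where the relevant eigenvalue is the smallest one.
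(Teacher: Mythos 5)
Your proof follows the paper's argument for this lemma essentially verbatim: the same decomposition of $\nabla^2(-\ell_1(\theta))$ into a sum of single-edge Laplacians, the same expectation computation with the $\tfrac{1}{k-1}(p_{i,S_t}(\theta)+p_{j,S_t}(\theta))$ factor leading to the $-\tfrac{1}{2k(k-1)e^{2b}}$ bound, the same Ger\v sgorin bound $\sigma=1/2$, and the same matrix Chernoff step on the $\vec{U}$-compressed matrices. The only blemish is the final numerical step (with the Chernoff bound exactly as stated, $\epsilon=1/2$ and $\sigma=1/2$ give $n e^{-\log n}=1$ rather than $1/n$ under the stated hypothesis), but the paper's own treatment of the analogous Lemma~\ref{lem:break1lambda_2} carries the same constant-factor slack, so this is not a substantive gap.
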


\begin{proof} $\nabla^2(-\ell_1(\theta))$ is a sum of random matrices given by:
$$
\nabla^2(-\ell_1(\theta)) = \sum_{t=1}^m \nabla^2 \left(-\log(p_{y_t,z_t}(\theta))\right)
$$
where $p_{y_t,z_t}(\theta) = e^{\theta_{y_t}}/(e^{\theta_{y_t}}+e^{\theta_{z_t}})$.

It is easy to establish that for all $t\in \{1,2,\ldots,m\}$ and $i\neq j$,
\begin{equation}
\frac{\partial^2 (-\log(p_{y_t,z_t}(\theta)))}{\partial \theta_i \partial \theta_j} = \left\{
\begin{array}{ll}
-p_{i,j}(\theta)(1-p_{i,j}(\theta)), & \hbox{ if } {\{i,j\} = \{y_t,z_t\}}\\
0, & \hbox{ otherwise}
\end{array}
\right .
\label{equ:n2logp1}
\end{equation}
and
\begin{equation}
\frac{\partial^2 (-\log(p_{y_t,z_t}(\theta)))}{\partial \theta_i^2} = -\sum_{v\in S_t\setminus \{i\}} \frac{\partial^2 (-\log(p_{y_t,z_t}(\theta)))}{\partial \theta_i\partial \theta_v} .
\label{equ:n2logp2}
\end{equation}

Hence, for every $i,j\in \{1,2,\ldots,n\}$ such that $i\neq j$,
\begin{eqnarray*}
  \E\left[ \frac{\partial^2 (-\log(p_{y_t,z_t}(\theta)))}{\partial \theta_i \partial \theta_j}\right] 
	&=& -p_{i,j}(\theta)(1-p_{i,j}(\theta))\Pr[\{y_t,z_t\}=\{i,j\}]\\
	&=& -p_{i,j}(\theta)(1-p_{i,j}(\theta)) \frac{1}{k-1}(p_{i,S_t}(\theta)+p_{j,S_t}(\theta))\\
	&=& -\frac{1}{k-1}\frac{1}{\sum_{v\in S_t}e^{\theta_v}}\frac{1}{e^{-\theta_i}+e^{-\theta_j}}\\
	& \leq & -\frac{1}{2k(k-1)e^{2b}}.
\end{eqnarray*}

Hence,
$$
\nabla^2 \E[(-\ell_1(\theta))] \succeq \frac{1}{2k(k-1)e^{2b}}\frac{m}{n}L_{\vec{M}}
$$
and, in particular,
\begin{equation}
\lambda_2(\nabla^2 \E[(-\ell_1(\theta))]) \geq \frac{1}{2k(k-1)e^{2b}}\frac{m}{n}\lambda_2(L_{\vec{M}}).
\label{equ:en2lik1}
\end{equation}

By (\ref{equ:n2logp1}) and (\ref{equ:n2logp2}) and Gre\v sgorin circle theorem,
\begin{equation}
\left\|\nabla^2\left(-\log(p_{y_t,z_t}(\theta))\right) \right\|_2 \leq 2 p_{y_t,z_t}(\theta)(1-p_{y_t,z_t}(\theta)) \leq \frac{1}{2}.
\label{equ:en2lik12}
\end{equation} 

The rest of the proof follows by the same arguments as in the proof of Lemma~\ref{lem:break1lambda_2}.
\end{proof}


\begin{lemma} With probability at least $1-2/n$,
\begin{equation}
\left\| \nabla(-\ell_1(\theta^\star)) \right\|_2  
\le  2\sqrt{m (\log (n)+2)}.
\label{eq:2-azu}
\end{equation}
\label{lem:breakk1nabla}
\end{lemma}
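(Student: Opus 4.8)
The plan is to apply the vector Azuma--Hoeffding bound of Lemma~\ref{prop:azuma-hoeffding} to the sum $\nabla(-\ell_1(\theta^\star)) = \sum_{t=1}^m X_t$, where $X_t := \nabla(-\log(p_{y_t,z_t}(\theta^\star)))$. These are independent random vectors in $\reals^n$, so once we verify $\E[X_t]=\vec 0$ their partial sums form a martingale, and the lemma applies provided we also exhibit a uniform bound $\|X_t\|_2 \le \sigma$.

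First I would compute $X_t$ explicitly. Writing $\log p_{y_t,z_t}(\theta) = \theta_{y_t} - \log(e^{\theta_{y_t}}+e^{\theta_{z_t}})$, the only nonzero coordinates of $X_t$ are those indexed by $y_t$ and $z_t$, equal to $-(1-p_{y_t,z_t}(\theta^\star))$ and $+(1-p_{y_t,z_t}(\theta^\star))$ respectively. Consequently $\|X_t\|_2^2 = 2(1-p_{y_t,z_t}(\theta^\star))^2 \le 2$, so we may take $\sigma = \sqrt 2$.

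Next I would check that $\E[X_t] = \vec 0$. Conditioning on $S_t$ with $|S_t| = k$, the choice $y_t$ equals $v \in S_t$ with probability $p_{v,S_t}(\theta^\star)$ and then $z_t$ is uniform over $S_t\setminus\{y_t\}$. Collecting the two ways a fixed index $i \in S_t$ can be hit (as $y_t$, or as $z_t$ paired with some $y_t = v$), the $i$-th coordinate of $\E[X_t]$ equals
$$
\frac{1}{k-1}\sum_{v\in S_t\setminus\{i\}}\bigl(-p_{i,S_t}(\theta^\star)p_{v,i}(\theta^\star) + p_{v,S_t}(\theta^\star)p_{i,v}(\theta^\star)\bigr),
$$
which vanishes term-by-term because under the Luce model $p_{i,S}(\theta)p_{v,i}(\theta) = e^{\theta_i}e^{\theta_v}/[(\sum_{u\in S}e^{\theta_u})(e^{\theta_i}+e^{\theta_v})]$ is symmetric in $i$ and $v$ --- the same pairwise identity already used in the proof of Lemma~\ref{lem:break1nabla} for the $\mathsf{ALL}$ estimator. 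For $i\notin S_t$ the coordinate is trivially zero.

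Finally, the vector Azuma--Hoeffding bound with $\sigma = \sqrt 2$ and $x = 2\sqrt{m(\log(n)+2)}$ gives $\Pr[\|\nabla(-\ell_1(\theta^\star))\|_2 \ge x] \le 2e^2 e^{-x^2/(2m\sigma^2)} = 2e^2 e^{-(\log(n)+2)} = 2/n$, which is the claimed bound. The only step that requires any care is the mean-zero verification, since here both the choice $y_t$ and the deduced partner $z_t$ are random; but after conditioning it collapses to exactly the pairwise identity used for the $\mathsf{ALL}$ estimator, so no genuinely new difficulty arises, and the norm bound $\|X_t\|_2\le\sqrt2$ is immediate.
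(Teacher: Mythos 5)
Your proposal is correct and follows essentially the same route as the paper's proof: compute the two nonzero coordinates of $\nabla(-\log(p_{y_t,z_t}(\theta^\star)))$, bound its norm by $\sqrt{2}$, verify the mean is zero by conditioning on $S_t$ and using the pairwise cancellation $p_{i,S}(\theta)p_{v,i}(\theta)=p_{v,S}(\theta)p_{i,v}(\theta)$, and apply the vector Azuma--Hoeffding bound. The only difference is cosmetic: you spell out why the cancellation holds (symmetry of the product under the Luce model), which the paper leaves implicit.
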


\begin{proof} $\nabla (-\ell_1(\theta))$ is a sum of independent random vectors in $\reals^n$ with elements given by
$$
\frac{\partial (-\log(p_{y_t,z_t}(\theta)))}{\partial \theta_i} = \left\{
\begin{array}{ll}
-p_{z_t,i}(\theta), & \hbox{ if } i = y_t\\
p_{i,y_t}(\theta), & \hbox{ if } i = z_t\\
0, & \hbox{ otherwise}.
\end{array}
\right .
$$

It follows that for all $t\in \{1,2,\ldots,m\}$ and $i\in \{1,2,\ldots,n\}$,
\begin{eqnarray*}
\E\left[\frac{\partial (-\log(p_{y_t,z_t}(\theta^\star)))}{\partial \theta_i}\right] 
&=& -p_{i,S_t}(\theta^\star) \sum_{j\in S_t \setminus\{ i\}} \frac{1}{k-1}p_{j,i}(\theta^\star) + 
\sum_{j\in S_t \setminus\{ i\}} p_{j,S_t}(\theta^\star)\frac{1}{k-1}p_{i,j}(\theta^\star)\\
&=& \frac{1}{k-1}\sum_{j\in S_t\setminus\{i\}} (-p_{i,S_t}(\theta^\star)p_{j,i}(\theta^\star) + p_{j,S_t}(\theta^\star)p_{i,j}(\theta^\star)) = 0
\end{eqnarray*}
and
\begin{equation}
\left\|\nabla (-\log(p_{y_t,z_t}(\theta)))\right\|_2^2 = p_{z_t,y_t}(\theta)^2 + p_{z_t,y_t}(\theta)^2 \le  2.
\end{equation}
The statement of the lemma than follows by vector Azuma-Hoeffding bound in Lemma~\ref{prop:azuma-hoeffding}.
\end{proof}

\section{Proof of Lemma~\ref{cor:matrix}}

From Theorem 5.1.1 in \cite{tropp2015introduction},
\begin{equation}
\Pr\left[ \lambda_{1}\left(S_m\right) \le
  (1-\epsilon) \lambda_1(\E[S_m]) \right]
	\le n
  \left(\frac{e^{-\epsilon}}{(1-\epsilon)^{1-\epsilon}}
  \right)^{\frac{\lambda_1(\E[S_m])}{\sigma}}\quad\mbox{for}~\epsilon\in [0,1).\label{equ:pro2}
\end{equation}
which combined with the fact
$$
\frac{e^{-\epsilon}}{(1-\epsilon)^{1-\epsilon}} \leq e^{-\frac{\epsilon^2}{2}}, \hbox{ for all } \epsilon \in (0,1]
$$
yields the lemma.

\subsection{Proof of Lemma~\ref{lem:Leig}}

Since $L_{\vec{A}} = \hbox{diag}(\vec{A}\vec{1}) - \vec{A}$, we have
$$
L_{\vec{A}} \vec{1} = \hbox{diag}(\vec{A}\vec{1})\vec{1} - \vec{A}\vec{1} = \vec{A}\vec{1} - \vec{A}\vec{1} = \vec{0}.
$$
Hence, $0$ is an eigenvalue of $L_{\vec{A}}$ for eigenvector $\vec{1}$.

Let $\vec{y}\in \reals^{n}$ be an eigenvector with corresponding eigenvalue $\lambda$ of $L_{\vec{A}}$. Since the columns of $\vec{U}$ are independent, $\vec{U}$ is nonsingular and it has inverse $\vec{U}^{-1}$ (e.g., Section~0.5~\cite{horn}. Let $\vec{x} = \vec{U}^{-1}\vec{y}$). 

Note that the following equations hold:
\begin{eqnarray*}
\vec{x}^T \vec{U}^T L_{\vec{A}} \vec{U}\vec{x} &=& (\vec{U}\vec{x})^\top L_{\vec{A}} (\vec{U}\vec{x})\\
&=& \vec{y}^\top L_{\vec{A}} \vec{y}\\
&=&  \lambda \vec{y}^\top\vec{y}\\
&=&  \lambda \vec{x}^\top \vec{U}^\top \vec{U}\vec{x}\\
&=& \lambda \vec{x}^\top \vec{x}.
\end{eqnarray*}

Hence, it follows that $\lambda$ is an eigenvalue of $\vec{U}^\top L_{\vec{A}} \vec{U}$ with corresponding eigenvector $\vec{x}$. 

\section{Proof of Lemma~\ref{lem:laplaceposdef}}

Let $\vec{C} = \vec{B}-\vec{A} \geq \vec{0}$. Note that 
\begin{eqnarray*}
L_{\vec{B}} - L_{\vec{A}} &=& (\hbox{diag}(\vec{B}\vec{1}) - \vec{B}) - (\hbox{diag}(\vec{A}\vec{1}) - \vec{A})\\
&=& \hbox{diag}((\vec{B}-\vec{A})\vec{1}) - (\vec{B}-\vec{A})\\
&=& L_{\vec{C}}. 
\end{eqnarray*}
Since $L_{\vec{C}}$ is positive semidefinite, it follows that $L_{\vec{B}} - L_{\vec{A}}$ is positive semidefinite, i.e. $L_{\vec{B}}\succeq L_{\vec{A}}$.

\begin{lemma} For all $S\subseteq \{1,2,\ldots,n\}$ such that $|S| = k\geq 2$ we have: for all $i,j\in \{1,2,\ldots,n\}$ such that $i\neq j$, for $v \in \{1,2,\ldots,n\} \setminus \{i,j\}$, 
\begin{equation}
\frac{\partial^2 (-\log(p_k(\vec{x}_v(\vec{0}))))}{\partial \theta_i \partial \theta_j} 
= - k\frac{\partial^2 p_k(\vec{0})}{\partial x_1 \partial x_2}
 + k^2 \left(\frac{\partial p_k(\vec{0})}{\partial x_1}\right)^2
\label{equ:case1}
\end{equation}
and
\begin{equation}
\frac{\partial^2 (-\log(p_k(\vec{x}_j(\vec{0}))))}{\partial \theta_i \partial \theta_j} = \frac{k(k-2)}{2}\frac{\partial^2 p_k(\vec{0})}{\partial x_1 \partial x_2} - k^2(k-1)\left(\frac{\partial p_k(\vec{0})}{\partial x_1}\right)^2.
\label{equ:case2}
\end{equation}
Moreover,
\begin{equation}
\frac{\partial^2 (-\log(p_k(\vec{x}_v(\vec{0}))))}{\partial \theta_i^2} = - \sum_{j\in S\setminus \{i\}} \frac{\partial^2 (-\log(p_k(\vec{x}_v(\vec{0}))))}{\partial \theta_i \partial \theta_j}.
\label{equ:diff2i}
\end{equation}
\label{lem:partiallogs}
\end{lemma}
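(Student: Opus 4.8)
The plan is to reduce everything to a handful of scalar integrals evaluated at the origin, one integration-by-parts identity, and the chain rule applied to the composition $\theta\mapsto-\log(p_k(\vec{x}_{v,S}(\theta)))$. Throughout I would assume the mild regularity on $F$ that is implicit elsewhere in the paper: the density $f$ vanishes at $\pm\infty$, and $f$ is continuously differentiable. The latter is only needed when the \emph{pure} second derivative $\partial^2 p_k/\partial x_a^2$ enters, i.e. for $k\ge 3$; for $k=2$ the set $S$ has only one non-reference item, (\ref{equ:case1}) is vacuous, and (\ref{equ:case2})--(\ref{equ:diff2i}) reduce to a one-line computation, so I would dispose of that case first.

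Next I would record the base facts about $p_k$ at $\vec{0}$. Substituting $u=F(z)$ in (\ref{equ:pkfunc}) gives $p_k(\vec{0})=\int_0^1 u^{k-1}\,du=1/k$. Differentiating under the integral sign and using that the integrand in (\ref{equ:pkfunc}) is symmetric in $x_1,\dots,x_{k-1}$, all first partials coincide and equal $q:=\partial p_k(\vec{0})/\partial x_1=\int_{\reals}f(z)^2F(z)^{k-2}\,dz$ (this is (\ref{equ:partp})); all mixed second partials coincide and equal $r:=\partial^2 p_k(\vec{0})/\partial x_1\partial x_2=\int_{\reals}f(z)^3F(z)^{k-3}\,dz$; and all pure second partials coincide and equal $s:=\partial^2 p_k(\vec{0})/\partial x_1^2=\int_{\reals}f'(z)f(z)F(z)^{k-2}\,dz$. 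The one nontrivial relation among these is $2s+(k-2)r=0$, which I would obtain by integrating the identity $\frac{d}{dz}\big(f(z)^2F(z)^{k-2}\big)=2f(z)f'(z)F(z)^{k-2}+(k-2)f(z)^3F(z)^{k-3}$ over $\reals$ and discarding the boundary term; hence $s=-\tfrac{k-2}{2}\,r$.

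Then, applying $\frac{\partial^2(-\log p_k)}{\partial x_a\partial x_b}=-\frac{1}{p_k}\frac{\partial^2 p_k}{\partial x_a\partial x_b}+\frac{1}{p_k^2}\frac{\partial p_k}{\partial x_a}\frac{\partial p_k}{\partial x_b}$ at $\vec{0}$ together with the base facts gives the two ``inner'' second derivatives
$$
\frac{\partial^2(-\log(p_k(\vec{0})))}{\partial x_a\partial x_b}=-kr+k^2q^2\quad(a\neq b),\qquad
\frac{\partial^2(-\log(p_k(\vec{0})))}{\partial x_a^2}=-ks+k^2q^2=\tfrac{k(k-2)}{2}r+k^2q^2 .
$$
Now the chain rule. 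Writing $\phi_v(\theta)$ for the vector with coordinates $\theta_v-\theta_l$, $l\in S\setminus\{v\}$, we have $-\log(p_{v,S}(\theta))=(-\log p_k)\circ\phi_v$, $\partial[\phi_v(\theta)]_l/\partial\theta_m=1_{\{m=v\}}-1_{\{m=l\}}$, and $\phi_v(\vec{0})=\vec{0}$. For (\ref{equ:case1}) the reference item $v$ differs from $i$ and $j$: a first differentiation in $\theta_i$ turns $(-\log p_k)\circ\phi_v$ into $-(\partial_{x_{(i)}}(-\log p_k))\circ\phi_v$, and a second differentiation in $\theta_j$ (again $j\neq v$) produces $\partial^2_{x_{(i)}x_{(j)}}(-\log p_k)$ at $\vec{0}$, i.e. $-kr+k^2q^2$; this is (\ref{equ:case1}). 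For (\ref{equ:case2}) the reference item is $j$ itself: the first differentiation in $\theta_i$ (with $i\neq j$) again gives $-(\partial_{x_{(i)}}(-\log p_k))\circ\phi_j$, but the second differentiation in $\theta_j$ now hits \emph{every} coordinate, producing $-\sum_{l\in S\setminus\{j\}}\partial^2_{x_{(i)}x_{(l)}}(-\log p_k)$; separating the term $l=i$ from the remaining $k-2$ terms and substituting the two inner values gives $-\big(\tfrac{k(k-2)}{2}r+k^2q^2\big)-(k-2)(-kr+k^2q^2)$, and collecting the coefficients of $r$ and of $q^2$ yields $\tfrac{k(k-2)}{2}r-k^2(k-1)q^2$, which is (\ref{equ:case2}). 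Finally, (\ref{equ:diff2i}) follows from shift invariance: $-\log(p_{v,S}(\theta))$ depends on $\theta$ only through differences of coordinates in $S$, hence is invariant under $\theta\mapsto\theta+c\vec{1}$, so its Hessian annihilates $\vec{1}$; since all partials with respect to coordinates outside $S$ vanish, $\sum_{j\in S}\partial^2_{\theta_i\theta_j}(-\log(p_{v,S}(\vec{0})))=0$, which is exactly (\ref{equ:diff2i}).

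The only genuinely delicate point is the integration-by-parts identity $2s+(k-2)r=0$, together with the accompanying mild regularity of $F$ needed both to differentiate $p_k$ twice under the integral and to discard the boundary term. Everything else is routine bookkeeping: the symmetry reductions in the base facts, the Kronecker-delta manipulations in the chain rule, and collecting like terms.
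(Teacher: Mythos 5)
Your proof is correct and follows essentially the same route as the paper's: the same base values $p_k(\vec{0})=1/k$ and symmetric first/second partials, the same integration-by-parts identity $2s+(k-2)r=0$ (the paper gets it by integrating $\int_\reals f f' F^{k-2}$ by parts, you by integrating the total derivative of $f^2F^{k-2}$ --- the same computation), and the same shift-invariance argument for (\ref{equ:diff2i}). The only difference is organizational --- you compute the Hessian of $-\log p_k$ on $\reals^{k-1}$ first and then push it through the linear map $\theta\mapsto\vec{x}_v(\theta)$, whereas the paper differentiates $p_k(\vec{x}_v(\theta))$ in $\theta$ directly --- which changes nothing of substance.
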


\begin{proof} Let $S\subseteq N$ be such that $|S| = k$, for an integer $2\leq k \leq n$. Without loss of generality, let $S = \{1,2,\ldots,k\}$. Let $\vec{x}_v(\theta) = (\theta_v - \theta_u, u\in S\setminus \{v\})$, for $v\in S$. We first consider $\frac{\partial^2}{\partial \theta_i \partial \theta_j}(-\log(p_k(\vec{x}_v(\theta))))$ for $i\neq j$. It is easy to note that
\begin{eqnarray*}
\frac{\partial^2 (-\log(p_k(\vec{x}_v(\theta))))}{\partial \theta_i \partial \theta_j} 
&=& -\frac{1}{p_k(\vec{x}_v(\theta))}\frac{\partial^2 p_k(\vec{x}_v(\theta))}{\partial\theta_i \partial \theta_j}\nonumber\\
&&  + \frac{1}{p_k(\vec{x}_v(\theta))^2} \frac{\partial p_k(\vec{x}_v(\theta))}{\partial \theta_i}  \frac{\partial p_k(\vec{x}_v(\theta))}{\partial \theta_j}. 
\end{eqnarray*}

We separately consider two different cases. 

Consider first the case when $\{i\}\cap \{j\}\cap \{v\} = \emptyset$. By differentiation, we have 
$$
\frac{\partial^2 (-\log(p_k(\vec{x}_v(\vec{0}))))}{\partial \theta_i \partial \theta_j} 
= - k\frac{\partial^2 p_k(\vec{0})}{\partial x_1 \partial x_2}
 + k^2 \left(\frac{\partial p_k(\vec{0})}{\partial x_1}\right)^2
$$
which establishes (\ref{equ:case1}).

Consider now the case when $i \neq v$ and $j = v$. First, note
\begin{eqnarray}
\frac{\partial^2}{\partial \theta_i \partial \theta_j}(-\log(p_k(\vec{x}_j(\theta))))
&=& -\frac{1}{p_k(\vec{x}_j(\theta))}\frac{\partial^2 p_k(\vec{x}_j(\theta))}{\partial\theta_i \partial \theta_j}\nonumber\\
&&  + \frac{1}{p_k(\vec{x}_j(\theta))^2} \frac{\partial p_k(\vec{x}_j(\theta))}{\partial \theta_i}  \frac{\partial p_k(\vec{x}_j(\theta))}{\partial \theta_j}.
\label{equ:e0}
\end{eqnarray}

For every $u\in S$, $p_k(\vec{x}_u(\theta))$ does not change its value by changing $\theta$ with $\theta + c \vec{1}$, for every constant $c\in \reals$. Hence, by full differentiation, we have
\begin{equation}
\frac{\partial p_k(\vec{x}_j(\theta))}{\partial \theta_j} = -\sum_{v\in S \setminus \{j\}} \frac{\partial p_k(\vec{x}_u(\theta))}{\partial \theta_v}.
\label{equ:fulldiff}
\end{equation}

From (\ref{equ:fulldiff}),
\begin{equation}
\frac{\partial^2 p_k(\vec{x}_j(\theta))}{\partial\theta_i \partial \theta_j} = -\frac{\partial^2 p_k(\vec{x}_j(\theta))}{\partial \theta_i^2} - \sum_{v\in S \setminus \{i,j\}} \frac{\partial^2 p_k(\vec{x}_j(\theta))}{\partial \theta_i \partial \theta_v}.
\label{equ:inter}
\end{equation}

Now, note that
$$
\frac{\partial^2 p_k(\vec{x}_j)}{\partial \theta_i^2} = \int_\reals f(z) f'(x_i+z)\prod_{v\in S\setminus \{i,j\}}F(x_v+z)dz.
$$
Hence, 
\begin{eqnarray*}
\frac{\partial^2 p_k(\vec{x}_j(\vec{0}))}{\partial \theta_i^2}
&=&  \int_\reals f(z) f'(z) F(z)^{k-2}dz\\
&=& f(z)^2F(z)^{k-2}|_{-\infty}^\infty - \int_\reals f(z) (f(z)F(z)^{k-2})'dz\\
&=& - \int_\reals f(z)f'(z)F(z)^{k-2}dz - (k-2) \int_\reals f(z)^2F(z)^{k-3}dz\\
&=& - \frac{\partial^2 p_k(\vec{x}_j(\vec{0}))}{\partial \theta_i^2} - (k-2) \frac{\partial^2 p_k(\vec{0})}{\partial x_1 \partial x_2}.
\end{eqnarray*}

From this it follows
\begin{equation}
\frac{\partial^2 p_k(\vec{x}_j(\vec{0}))}{\partial \theta_i^2} = -\frac{k-2}{2}\frac{\partial^2 p_k(\vec{0})}{\partial x_1 \partial x_2}.
\label{equ:e2}
\end{equation}

From (\ref{equ:inter}) and (\ref{equ:e2}), 
\begin{equation}
\frac{\partial^2 p_k(\vec{x}_j(\vec{0}))}{\partial\theta_i \partial \theta_j} = -\frac{k-2}{2}\frac{\partial^2 p_k(\vec{0})}{\partial x_1\partial x_2}.
\label{equ:e3}
\end{equation}

From (\ref{equ:fulldiff}),
\begin{equation}
\frac{\partial p_k(\vec{x}_j(\vec{0}))}{\partial \theta_j} = (k-1)\frac{\partial p_k(\vec{0})}{\partial x_1}.
\label{equ:e1}
\end{equation}

Combining (\ref{equ:e0}), (\ref{equ:e3}) and (\ref{equ:e1}), we obtain (\ref{equ:case2}).

Since for every $v\in S$,  $\log(p_k(x_v(\theta)))$ does not change its value by changing $\theta$ to $\theta + c\vec{1}$ for all $c\in \reals$, by full differentiation
$$
\frac{\partial (-\log(p_k(x_v(\theta))))}{\partial \theta_i} = -\sum_{j\in S\setminus \{i\}}\frac{\partial (-\log(p_k(x_v(\theta)))}{\partial \theta_j}.
$$
Taking partial derivative with respect to $\theta_i$ on both sides implies (\ref{equ:diff2i}).
\end{proof}

\begin{lemma} Let $S\subseteq N$ be such that $|S| = k\geq 2$ and $Y$ be a random variable according to distribution $p_{y,S}(\theta)$, for $\theta \in \reals^n$. Then, 
$$
\E\left[\nabla^2(-\log(p_{Y,S}(\vec{0})))\right] =  \left(k\frac{\partial p_k(\vec{0})}{\partial x_1}\right)^2 L_{\vec{M}_S}
$$
where $\vec{M}_S = [m_{i,j}^S]\in \reals^{n\times n}$ is such that $m_{i,j}^S = 1$ if $i,j\in S$ and $i\neq j$, and $m_{i,j}^S = 0 $, otherwise.
\label{lem:explog}
\end{lemma}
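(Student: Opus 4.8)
The plan is to expand the expectation as a convex combination of single-choice Hessians, compute its off-diagonal entries with Lemma~\ref{lem:partiallogs}, and match them to those of $(k\,\partial p_k(\vec 0)/\partial x_1)^2 L_{\vec M_S}$. First I would write, by definition of the law of $Y$,
$$
\E\left[\nabla^2(-\log(p_{Y,S}(\vec 0)))\right] = \sum_{y\in S} p_{y,S}(\theta)\,\nabla^2(-\log(p_{y,S}(\vec 0))).
$$
Since $p_{y,S}(\cdot)=p_k(\vec x_{y,S}(\cdot))$ depends only on the coordinates $\{\theta_u:u\in S\}$, each matrix $\nabla^2(-\log(p_{y,S}(\vec 0)))$ vanishes outside the $S\times S$ block, as does $L_{\vec M_S}$; and by (\ref{equ:diff2i}) each has zero row sums, hence so does the convex combination on the right-hand side. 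A symmetric matrix with zero row sums is determined by its off-diagonal entries (the diagonal equals the negated row sum of the off-diagonals), and $(k\,\partial p_k(\vec 0)/\partial x_1)^2 L_{\vec M_S}$ is of this form too; so it suffices to check that the two matrices have the same off-diagonal entries.

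To do so, I would read the entries off Lemma~\ref{lem:partiallogs}. Abbreviating $p'=\partial p_k(\vec 0)/\partial x_1$ and $p''=\partial^2 p_k(\vec 0)/\partial x_1\partial x_2$, for distinct $i,j\in S$ the $(i,j)$-entry of $\nabla^2(-\log(p_{y,S}(\vec 0)))$ equals $a:=-k\,p''+k^2(p')^2$ when $y\notin\{i,j\}$ (this is (\ref{equ:case1}) with reference index $v=y$) and $b:=\tfrac{k(k-2)}{2}\,p''-k^2(k-1)(p')^2$ when $y\in\{i,j\}$ (this is (\ref{equ:case2}), with symmetry of the Hessian covering both $y=i$ and $y=j$); entries with $i\notin S$ or $j\notin S$ are $0$. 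Among the $k$ elements $y\in S$ exactly two lie in $\{i,j\}$ and the other $k-2$ do not; and because $p_k$ is symmetric in its arguments and $\vec x_{y,S}(\vec 0)=\vec 0$, the weights are uniform, $p_{y,S}(\vec 0)=1/k$ for every $y$ --- this is the instance of the lemma that is actually invoked in our proofs. Hence the $(i,j)$-entry of $\E[\nabla^2(-\log(p_{Y,S}(\vec 0)))]$ equals $\tfrac1k\bigl((k-2)a+2b\bigr)$.

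The crux is the identity $(k-2)a+2b=-k^3(p')^2$: the cross-derivative terms cancel, $(k-2)(-k\,p'')+2\cdot\tfrac{k(k-2)}{2}\,p''=0$, and the remaining terms give $(k-2)k^2(p')^2-2k^2(k-1)(p')^2=-k^3(p')^2$. Thus every off-diagonal entry of the averaged matrix equals $-k^2(p')^2=-(kp')^2$, which is exactly the $(i,j)$-entry of $(kp')^2 L_{\vec M_S}$; since both matrices are symmetric with zero row sums, they then agree on the diagonal as well, and the lemma follows. I expect the only real work to be the coefficient bookkeeping in the cancellation $(k-2)a+2b=-k^3(p')^2$, together with the elementary ``$2$ versus $k-2$'' count; everything else (support in the $S\times S$ block, the fact that zero-row-sum symmetric matrices are pinned down by their off-diagonals, and linearity of the expectation) is routine.
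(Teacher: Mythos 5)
Your proof is correct and follows essentially the same route as the paper's: both expand the expectation with the uniform weights $p_{y,S}(\vec{0})=1/k$, substitute the two off-diagonal values from Lemma~\ref{lem:partiallogs}, observe that the cross-derivative terms cancel to leave $-k^2(\partial p_k(\vec{0})/\partial x_1)^2$, and recover the diagonal from the zero-row-sum identity (\ref{equ:diff2i}). Your side remark that the statement must be read at $\theta=\vec{0}$ (so that the weights are uniform) is consistent with how the paper's own proof and its applications in Lemma~\ref{lem:khessi} and Theorem~\ref{thm:cramer-rao-bound} actually invoke the lemma.
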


\begin{proof} By (\ref{equ:case1}) and (\ref{equ:case2}) in Lemma~\ref{lem:partiallogs}, we have for all $i,j\in \{1,2,\ldots,n\}$ such that $i\neq j$,
\begin{eqnarray}
\E\left[\frac{\partial^2 (-\log(p_{Y,S}(\vec{0})))}{\partial\theta_i \partial \theta_j}\right] 
&=&  \sum_{v\in S\setminus \{i,j\}} p_k(\vec{x}_v(\vec{0})) \frac{\partial^2 (-\log(p_k(\vec{x}_v(\vec{0}))))}{\partial \theta_i \partial \theta_j}\nonumber\\
&& + p_k(\vec{x}_i(\vec{0})) \frac{\partial^2 (-\log(p_k(\vec{x}_i(\vec{0}))))}{\partial \theta_i \partial \theta_j} \nonumber\\
&& + p_k(\vec{x}_j(\vec{0})) \frac{\partial^2 (-\log(p_k(\vec{x}_j(\vec{0}))))}{\partial \theta_i \partial \theta_j} \nonumber\\
&=& \frac{k-2}{k}\frac{\partial^2 (-\log(p_k(\vec{x}_v(\vec{0}))))}{\partial \theta_i \partial \theta_j} \nonumber \\
&& + \frac{2}{k}\frac{\partial^2 (-\log(p_k(\vec{x}_i(\vec{0}))))}{\partial \theta_i \partial \theta_j}, \hbox{ for any } v \in S\setminus\{i,j\}\nonumber\\
&=& -k^2 \left(\frac{\partial p_k(\vec{0})}{\partial x_1}\right)^2.\label{equ:gijdiff}
\end{eqnarray}

By (\ref{equ:diff2i}) in Lemma~\ref{lem:partiallogs}, we have
\begin{eqnarray*}
\E\left[\frac{\partial^2 (-\log(p_{Y,S}(\vec{0})))}{\partial\theta_i^2}\right] &=& -\sum_{u\in S\setminus \{i\}} \sum_{v\in S} p_k(\vec{x}_v(\vec{0}))\frac{\partial^2 (-\log(p_k(\vec{x}_v(\vec{0}))))}{\partial \theta_i\partial \theta_u}\\ 
&=& k^2(k-1) \left(\frac{\partial p_k(\vec{0})}{\partial x_1}\right)^2.
\end{eqnarray*}
\end{proof}

\begin{lemma} If for $S\subseteq N$ such that $|S| =k \geq 2$,
$$
\frac{\partial^2 (-\log(p_{v,S}(\vec{0})))}{\partial \theta_i\partial \theta_j} \leq 0 \hbox{ for all } i,j,v\in \{1,2,\ldots,n\} \hbox{ such that } i\neq j
$$
then
$$
\left\| \nabla^2(-\log(p_{y,S} ({\bm 0}))) \right\|_2 \le \frac{1}{\gamma_{F,k}}
$$
where $1/\gamma_{F,k} = \left(k^2 \partial p_k(\vec{0})/\partial x_1\right)^2$.
\label{lem:hessian-p2}
\end{lemma}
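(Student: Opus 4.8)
The plan is to recognise the Hessian $H:=\nabla^2(-\log(p_{y,S}(\vec{0})))$ as a graph Laplacian with nonnegative edge weights, and then to bound its spectral norm (which for such a matrix equals its largest eigenvalue) by the Ger\v{s}gorin circle theorem (Lemma~\ref{lem:gersgorin}), controlling the row sums with the explicit formulas of Lemma~\ref{lem:partiallogs}.

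First I would verify that $H$ is a Laplacian matrix. Since $p_{y,S}(\theta)$ depends on $\theta$ only through the coordinates indexed by $S$, every row and column of $H$ indexed by $i\notin S$ vanishes. For $i,j\in S$ with $i\neq j$ the hypothesis gives $H_{ij}\le 0$, and identity (\ref{equ:diff2i}) gives $H_{ii}=-\sum_{j\neq i}H_{ij}$, i.e. $H\vec{1}=\vec{0}$. Hence $H=L_{\vec{W}}$ for the nonnegative weighted-adjacency matrix $\vec{W}$ with $W_{ij}=-H_{ij}$, so by the facts in Appendix~\ref{sec:back} $H$ is positive semidefinite and $\|H\|_2=\lambda_n(H)$. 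Because $H$ is a Laplacian, each Ger\v{s}gorin disc of $H$ is $\{z:|z-H_{ii}|\le\sum_{j\neq i}|H_{ij}|\}=\{z:|z-H_{ii}|\le H_{ii}\}=[0,2H_{ii}]$, so $\|H\|_2=\lambda_n(H)\le 2\max_i H_{ii}$.

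Next I would read off the weights of $\vec{W}$ from (\ref{equ:case1}) and (\ref{equ:case2}). Writing $a=\partial p_k(\vec{0})/\partial x_1$, $b=\partial^2 p_k(\vec{0})/\partial x_1\partial x_2$, and setting $c_1:=kb-k^2a^2$ and $c_2:=k^2(k-1)a^2-\tfrac{k(k-2)}{2}b$ (the negatives of the right-hand sides of (\ref{equ:case1}) and (\ref{equ:case2})), the hypothesis says exactly that $c_1\ge 0$ and $c_2\ge 0$. With $y$ fixed, the only two cases of Lemma~\ref{lem:partiallogs} that occur for a pair $i\neq j$ are $y\notin\{i,j\}$ (giving $H_{ij}=-c_1$) and $y\in\{i,j\}$ (giving $H_{ij}=-c_2$), so $\vec{W}$ has a ``star plus clique'' structure: $W_{y,j}=c_2$ for $j\in S\setminus\{y\}$, $W_{i,j}=c_1$ for distinct $i,j\in S\setminus\{y\}$, and all other entries $0$. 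Hence $H_{yy}=(k-1)c_2$ and $H_{ii}=c_2+(k-2)c_1$ for $i\in S\setminus\{y\}$, so $\max_i H_{ii}=\max\{(k-1)c_2,\ c_2+(k-2)c_1\}$. Substituting the explicit forms of $c_1,c_2$ shows that the $b$ terms cancel in the combination $2c_2+(k-2)c_1=k^3a^2$. Using $c_1,c_2\ge0$ and $k\ge2$, this gives $c_2\le k^3a^2/2$, whence $(k-1)c_2\le(k-1)k^3a^2/2\le k^4a^2/2$, and also $c_2+(k-2)c_1=k^3a^2-c_2\le k^3a^2\le k^4a^2/2$. Therefore $\max_i H_{ii}\le k^4a^2/2$ and $\|H\|_2\le k^4a^2=(k^2\,\partial p_k(\vec{0})/\partial x_1)^2=1/\gamma_{F,k}$. (The same computation in fact yields the slightly sharper bound $\|H\|_2\le\max\{k-1,2\}\,k^3a^2$.)

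The main obstacle is the bookkeeping in the last paragraph: confirming that once $y$ is fixed only those two cases of Lemma~\ref{lem:partiallogs} arise, and checking that the mixed second derivative $b$ drops out of $2c_2+(k-2)c_1$. Both are routine index tracking and arithmetic; the remainder is an immediate application of the Ger\v{s}gorin bound together with the Laplacian facts recorded in Appendix~\ref{sec:back}.
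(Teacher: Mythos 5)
Your proof is correct. It shares the overall skeleton of the paper's argument (explicit Hessian entries from Lemma~\ref{lem:partiallogs}, the sign hypothesis turned into inequalities on $\partial p_k(\vec{0})/\partial x_1$ and $\partial^2 p_k(\vec{0})/\partial x_1\partial x_2$, and the Laplacian structure of $\nabla^2(-\log(p_{y,S}(\vec{0})))$), but the final spectral estimate is extracted differently. The paper lower-bounds each off-diagonal entry individually by $-k^3(\partial p_k(\vec{0})/\partial x_1)^2$ --- which requires deriving $(k-2)\,\partial^2 p_k(\vec{0})/\partial x_1\partial x_2 \le 2k(k-1)(\partial p_k(\vec{0})/\partial x_1)^2$ and a separate case split for $k=2$ versus $k>2$ --- and then dominates the Hessian by that multiple of the complete-graph Laplacian $L_{\vec{M}_S}$, whose spectral norm is $k$. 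You instead compute the two distinct diagonal entries exactly, observe the clean cancellation $2c_2+(k-2)c_1=k^3(\partial p_k(\vec{0})/\partial x_1)^2$ of the mixed-derivative term, and finish with Ger\v{s}gorin applied directly to the Laplacian (radius equals diagonal, so $\|H\|_2\le 2\max_i H_{ii}$). Your route avoids the $k=2$/$k>2$ dichotomy, and your remark that it yields $\|H\|_2\le\max\{k-1,2\}\,k^3(\partial p_k(\vec{0})/\partial x_1)^2$ is correct --- for $k\ge 3$ this matches the quantity $k^3(k-1)(\partial p_k(\vec{0})/\partial x_1)^2$ used as $1/\gamma_{F,k}$ in Definition~(\ref{equ:gamma}), which is slightly sharper than the $k^4(\partial p_k(\vec{0})/\partial x_1)^2$ stated in the lemma. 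The paper's entrywise-domination step buys a statement ($k^3(\partial p_k(\vec{0})/\partial x_1)^2 L_{\vec{M}_S}\succeq H$) that is reused implicitly elsewhere, whereas your computation is more self-contained and gives the tighter constant.
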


\begin{proof} Without loss of generality, let $y=1$ and $S = \{1,2,\dots,k \}$. By Lemma~\ref{lem:partiallogs}, we have
\begin{equation}
\frac{\partial^2(-\log(p_{1,S} ({\bm 0})))}{\partial \theta_1 \partial \theta_2} 
= - k^2(k-1)\left( \frac{\partial p_k({\bm 0})}{\partial x_1}\right)^2
 + \frac{k(k-2)}{2}\frac{\partial^2 p_k({\bm 0})}{\partial x_1 \partial x_2} 
\label{eq:hessi-1}
\end{equation}
and for $i\neq 1$, $j\neq 1$ and $j\neq i$,
\begin{equation}
\frac{\partial^2 (-\log(p_{1,S} ({\bm 0})))}{\partial \theta_i \partial \theta_j} = -k \frac{\partial^2 p_k({\bm 0})}{\partial x_1 \partial x_2}
+ k^2\left(\frac{\partial p_k({\bm 0})}{\partial x_1}\right)^2. 
\label{eq:hessi-2}
\end{equation}

Since by condition of the lemma the left-hand side in (\ref{eq:hessi-1}) is non positive, we have 
\begin{equation}
(k-2)\frac{\partial^2 p_k({\bm 0})}{\partial x_1 \partial x_2} \le 2k(k-1) \left( \frac{\partial p_k({\bm 0})}{\partial x_1}\right)^2
\label{eq:hessi-3}
\end{equation}
and, since by condition of the lemma the left-hand side in (\ref{eq:hessi-2}) is non positive, we have 
\begin{equation}
\frac{\partial^2 p_k({\bm 0})}{\partial x_1 \partial x_2}\ge 0.
\label{eq:hessi-4}
\end{equation} 

From (\ref{eq:hessi-1}) and (\ref{eq:hessi-4}),
$$
\frac{\partial^2(-\log(p_{1,S} ({\bm 0})))}{\partial \theta_1 \partial \theta_2} 
\geq - k^2(k-1)\left( \frac{\partial p_k({\bm 0})}{\partial x_1}\right)^2.
$$
Consider now the case when $i\neq 1$, $j\neq 1$ and $i\neq j$. If $k = 2$, then obviously 
$$
\frac{\partial^2 (-\log(p_{1,S} ({\bm 0})))}{\partial \theta_i \partial \theta_j} = 0.
$$
Otherwise, if $k > 2$, from (\ref{eq:hessi-2}) and (\ref{eq:hessi-3}), we have
\begin{eqnarray*}
\frac{\partial^2 (-\log(p_{1,S} ({\bm 0})))}{\partial \theta_i \partial \theta_j} & \geq & k^2 \left(\frac{\partial p_k(\vec{0})}{\partial x_1}\right)^2 - \frac{2k^2(k-1)}{k-2} \left(\frac{\partial p_k(\vec{0})}{\partial x_1}\right)^2\\
& = & -k^2\left(-1 + \frac{2(k-1)}{k-2}\right)\left(\frac{\partial p_k(\vec{0})}{\partial x_1}\right)^2\\
& = & -k^2 \frac{k}{k-2}\left(\frac{\partial p_k(\vec{0})}{\partial x_1}\right)^2\\
& \geq & -k^3 \left(\frac{\partial p_k(\vec{0})}{\partial x_1}\right)^2.
\end{eqnarray*}

Hence, it follows  
$$
k^3 \left( \frac{\partial p_k({\bm 0})}{\partial x_1}\right)^2 L_{\vec{M}_S} \succeq \nabla^2 (-\log(p_{y,S} ({\bm 0}))).
$$
Therefore, we conclude
\begin{eqnarray*}
\|\nabla^2 (-\log(p_{y,S} ({\bm 0}))) \|_2 &\leq & k^3 \left( \frac{\partial p_k({\bm 0})}{\partial x_1}\right)^2 \|L_{\vec{M}_S}\|_2\\
& = & k^4 \left(\frac{\partial p_k(\vec{0})}{\partial x_1}\right)^2.
\end{eqnarray*}
\end{proof}

\section{Remark for Theorem~\ref{thm:karyub}}

For the special case of noise according to the double-exponential distribution with parameter $\beta$, we have
$$
p_k(\vec{x})=\frac{1}{1+\sum_{i=1}^{k-1}e^{-x_i/\beta}}.
$$

For every $\theta\in \theta_n$ and every $S\subseteq N$ of cardinality $k$ and $i,j,y\in S$, we can easily check that
$$
\frac{\partial^2}{\partial \theta_i \partial \theta_j} (-\log(p_{y,S}(\theta))) = -\frac{1}{\beta^2} p_{i,S}(\theta)p_{j,S}(\theta).
$$
Furthermore, the following two relations hold
$$
\frac{k}{\beta(k-1)}\left(1-p_{y,S}(\theta)\right)^2 \le \left\|\nabla p_{y,S}(\theta)\right\|_2 \le \frac{2}{\beta}\left(1-p_{y,S}(\theta) \right)^2.
$$

Since 
$$
\min_{y\in S,\theta\in [-b,b]^n} p_{y,S}(\theta) 
= \frac{1}{1+(k-1)e^{2b/\beta}}
\ge p_{y,S}({\bm 0}) e^{-2b/\beta}
$$
and
$$
\max_{y\in S,\theta\in [-b,b]^n} p_{y,S}(\theta) 
= \frac{1}{1+(k-1)e^{-2b/\beta}}
\le p_{y,S}({\bm 0}) e^{2b/\beta}
$$
we have that 
$$
\sigma_{F,K} \le \frac{1}{\beta^2}
$$
and
\begin{align}
&e^{-4b/\beta}\le A \le \widetilde{A} \le e^{4b/\beta}, \\
&e^{-4b/\beta}\le \widetilde{B} \le B \le 4,\\
&e^{-2b/\beta}\le C \le \widetilde{C} \le e^{2b/\beta}.
\end{align}

\section{Proof of Theorem~\ref{thm:clustering-example}}
\label{sec:clustering-example}

Let $p^e$ denote the probability that the point score ranking method incorrectly classifies at least one item: 
$$
p^e = \Pr\left[\bigcup_{v\in N_1} \{v\in \widehat N_1\} \cup \bigcup_{v\in N_2} \{v\in \widehat N_2\}\right]
$$

Let $R_i$ denote the point score of item $i\in N$. If the point scores are such that $R_v > m/n$ for every $v\in N_1$ and $R_v < m/n$ for every $v\in N_2$, then this implies a correct classification. Hence, it must be that in the event of a misclassification of an item, $R_v \leq m/n$ for some $v\in N_1$ or $R_v \geq m/n$ for some $v\in N_2$. Combining this with the union bound, we have
\begin{equation}
p^e 
\leq \sum_{v\in N_v}\Pr\left[R_v \leq \frac{m}{n}\right] + \sum_{l\in N_2}\Pr\left[R_v \geq \frac{m}{n}\right].
\label{equ:tobound}
\end{equation}

Let $i$ and $j$ be arbitrarily fixed items such that $i\in N_1$ and $j\in N_2$. We will show that for $t\in \{1,2,\ldots,m\}$,
\begin{equation}
\Pr[y_t = i] \geq \frac{1}{n} + \frac{bk^2}{4n}\frac{\partial p_k(\vec{0})}{\partial x_1}
\label{equ:yti}
\end{equation}
and
\begin{equation}
\Pr[y_t = j] \leq \frac{1}{n} - \frac{bk^2}{4n}\frac{\partial p_k(\vec{0})}{\partial x_1}.
\label{equ:ytj}
\end{equation}

By the Chernoff bound (\ref{equ:che1}) we have
\begin{eqnarray*}
\Pr\left[R_i \leq \frac{m}{n}\right] & \leq & \exp\left(-\frac{1}{4}n\left(\frac{1}{n} - \E[y_1 = i]\right)^2 m\right)\\
& \leq & \exp\left(-\frac{1}{4}\left(\frac{bk^2}{4n}\frac{\partial p_k(\vec{0})}{\partial x_1}\right)^2 m\right)\\
& \leq & \exp\left(-\log(n/\delta)\right)\\
&=& \frac{\delta}{n}.
\end{eqnarray*}

Similarly, by the Chernoff bound (\ref{equ:che2}), we have
$$
\Pr\left[R_j \geq \frac{m}{n}\right] \leq \frac{\delta}{n}.
$$

Combining with (\ref{equ:tobound}), we have $p^e \leq \delta$.

In the remainder of the proof we show that (\ref{equ:yti}) and (\ref{equ:ytj}) hold.

Let $\calA$ contain all $A \subseteq N$ such that $|A| = k-1$ and $A\cap\{i,j\}=\emptyset$ and $\calB$ contain all $B\subseteq N$ such that $|B| = k-2$ and $B\cap\{i,j\}=\emptyset$. Then, we have
\begin{equation}
\Pr[ y_t = i ] - \Pr[ y_t = j ] = \sum_{A\in \mathcal{A}} \Pr[S_t = A\cup\{i\}] D_{i,j}(A) +\sum_{B\in \mathcal{B}} \Pr[S_t = B \cup \{i,j\}] D_{i,j}(B) \label{eq:prdifer}
\end{equation}
where
$$
D_{i,j}(A) = \Pr[ y_t = i | S_t= A\cup\{i \} ] - \Pr[ y_t = j| S_t= A\cup\{j \}]
$$
and
$$
D_{i,j}(B) = \Pr[ y_t = i | S_t= B\cup\{i,j \}] -\Pr[ y_t = j | S_t= B\cup\{i,j \}].
$$

Let $\vec{b}$ be a $k-1$-dimensional vector with all elements equal to $b$. Then, note that
$$
D_{i,j}(A) = p_k (\vec{b}-\theta_{A})- p_k (-\vec{b}-\theta_{A}).
$$

By limited Taylor series development, we have
\begin{eqnarray}
p_k (\vec{x}) &\ge& p_k (\vec{0}) + \nabla p_k (\vec{0})^\top \vec{x} - \frac{1}{2} \beta \|\vec{x}\|_2^2 \label{equ:ptaylor1}\\
p_k (\vec{x}) &\le& p_k (\vec{0}) + \nabla p_k (\vec{0})^\top \vec{x} + \frac{1}{2} \beta \|\vec{x}\|_2^2
\label{equ:ptaylor2}
\end{eqnarray}
where 
\begin{equation}
\beta = \max_{\vec{x}\in [-2b,2b]^{k-1}} \|\nabla^2 p_k (\vec{x}) \|_2.
\label{equ:betaparam}
\end{equation}

Hence, it follows that for every $\theta_{A}\in \{-b,b\}^{k-1}$, 
\begin{equation}
D_{i,j}(A) \geq 2(k-1)b \frac{\partial p_k (\vec{0})}{\partial x_1} - 
 4(k-1)b^2\beta.
\label{eq:cb1}
\end{equation}
Under the condition of the theorem, we have
$$
\beta \le \frac{1}{4b}\frac{\partial p_k (\vec{0})}{\partial x_1}.
$$
Hence, combining with \eqref{eq:cb1}, for every $\theta_{A}\in \{-b,b\}^{k-1}$,
\begin{equation}
D_{i,j}(A) \geq (k-1)b \frac{\partial p_k (\vec{0})}{\partial x_1} \geq  \frac{kb}{2} \frac{\partial p_k (\vec{0})}{\partial x_1}. 
\label{eq:thea}
\end{equation}

By the same arguments, we can show that 
\begin{equation}
D_{i,j}(B) = p_k ({\bm b}-\theta^{(-b)}_{B})- p_k (-{\bm b}-\theta^{(b)}_{B}) \geq \frac{kb}{2} \frac{\partial p_k (\vec{0})}{\partial x_1}
\label{eq:theb}
\end{equation}
where $\theta^{(b)}_{B}\in \{-b,b \}^{k-1}$ and $\theta^{(-b)}_{B}\in \{-b,b \}^{k-1}$ are $(k-1)$-dimensional vectors with the first two elements equal to $b$ and $-b$, respectively, and other elements equal to the parameters of items $B$. 

Since comparison sets are sampled uniformly at random without replacement, 
\begin{equation}
\Pr[S_t = A\cup \{i\}] = \frac{\binom{n-1}{k-1}}{\binom{n}{k}}, \hbox{ for all } A \in {\mathcal A}
\label{equ:setA}
\end{equation}
and
\begin{equation}
\Pr[S_t = B\cup \{i,j\}] = \frac{\binom{n-2}{k-2}}{\binom{n}{k}}, \hbox{ for all } B \in {\mathcal B}.
\label{equ:setB}
\end{equation}

From \eqref{eq:prdifer}, \eqref{eq:thea}, \eqref{eq:theb}, \eqref{equ:setA} and \eqref{equ:setB}, we have
$$
\Pr[ y_t = i ] - \Pr[ y_t = j ] \ge  \frac{k^2b}{2n} \frac{\partial p_k (\vec{0})}{\partial x_1}.
$$

Using this inequality together with the following facts (i) $\Pr[y_t = v] = \Pr[y_t = i]$ for every $v\in N_1$, (ii) $\Pr[y_t = v] = \Pr[y_t = j]$ for every $v\in N_2$, (iii) $\sum_{v\in N} \Pr[y_t = v] = 1$, and (iv) $|N_1| = |N_2| = n/2$, it can be readily shown that
$$
\Pr[ y_t = i ] \ge \frac{1}{n} + \frac{k^2b}{4n} \frac{\partial p_k ({\bm 0})}{\partial x_1},
$$
which establishes (\ref{equ:yti}). By the same arguments one can establish (\ref{equ:ytj}).

\section{Proof of Theorem~\ref{thm:clustering-low}}
\label{sec:clustering-low}

Suppose that $n$ is a positive even integer and $\theta$ is the parameter vector such that $\theta_i =b$ for $i\in N_1$ and $\theta_i = -b$ for $i \in N_2$, where $N_1 =\{1,2,\ldots,n/2\}$ and $N_2 = \{n/2+1,\ldots,n\}$. Let $\theta'$ be the parameter vector that is identical to $\theta$ except for swapping the first and the last item, i.e. $\theta'_i = b$ for $i\in N_1'$ and $\theta'_i = -b$ for $i\in N_2'$, where $N_1' = \{n,2,\ldots,n/2\}$ and $N_2'=\{n/2+1,\ldots,n-1,1\}$. 

We denote with $\Pr_\theta[A]$ and $\Pr_{\theta'}[A]$ the probabilities of an event $A$ under hypothesis that the generalized Thurstone model is according to parameter $\theta$ and $\theta'$, respectively. We denote with $\E_\theta$ and $\E_{\theta'}$ the expectations under the two respective distributions. 

Given observed data $(\vec{S},\vec{y}) = (S_1,y_1), \ldots, (S_m,y_m)$, we denote the log-likelihood ratio statistic $L(\vec{S},\vec{y})$ as follows
\begin{equation}
L(\vec{S},\vec{y}) = \sum_{t=1}^{m}\log\left(\frac{p_{y_t,S_t}(\theta')\rho_t(S_t)}{p_{y_t,S_t}(\theta)\rho_t(S_t)}\right),
\end{equation}
where $\rho_t (S)$ is the probability that $S$ is drawn at time $t$.

The proof follows the following two steps:

\paragraph{Step 1:} We show that for given $\delta \in [0,1]$, for the existence of an algorithm that correctly classifies all the items with probability at least $1-\delta$, it is necessary that the following condition holds
\begin{equation}
\Pr_{\theta'}[L(\vec{S},\vec{y}) \ge \log(n/\delta)] \ge \frac{1}{2}.
\label{equ:plrt}
\end{equation}

\paragraph{Step 2:} We show that 
\begin{eqnarray}
\E_{\theta'}[L(\vec{S},\vec{y})] &\le& 36\frac{m}{n}\left(k^2b\frac{\partial p_k ({\bm 0})}{\partial x_1}\right)^2 \label{equ:el}\\
\sigma^2_{\theta'}[L(\vec{S},\vec{y})] &\le& 144\frac{m}{n}\left(k^2b\frac{\partial p_k ({\bm 0})}{\partial x_1}\right)^2\label{equ:varl}
\end{eqnarray}
where $\sigma^2_{\theta'}[L(\vec{S},\vec{y})]$ denotes the variance of random variable $L(\vec{S},\vec{y})$ under a generalized Thurstone model with parameter $\theta'$.

By Chebyshev's inequality, for every $g\in \reals$,
$$
\Pr_{\theta'}[|L(\vec{S},\vec{y})-\E_{\theta'}[L(\vec{S},\vec{y})]| \geq |g|] \leq \frac{\sigma^2_{\theta'}[L(\vec{S},\vec{y})]}{g^2}.
$$

Using this for  $g = \log(n/\delta)-\E_{\theta'}[L(\vec{S},\vec{y})]$, it follows that (\ref{equ:plrt}) implies the following condition:
\begin{eqnarray*}
\log (n/\delta) - \E_{\theta'}[L(\vec{S},\vec{y})]  & \le &  |\log (n/\delta) - \E_{\theta'}[L(\vec{S},\vec{y})]|\\
& \leq & \sqrt{2} \sigma_{\theta'}[L(\vec{S},\vec{y})].
\end{eqnarray*}

Further combining with (\ref{equ:el}) and (\ref{equ:varl}), we obtain
$$
m \geq \frac{1}{62}\frac{1}{b^2 k^4 (\partial p_k(\vec{0})/\partial x_1)^2} n(\log(n) + \log(1/\delta))
$$
which is the condition asserted in the theorem.

\paragraph{Proof of Step 1.} Let us define the following two events 
$$
A = \{|N_1\setminus \widehat N_1| = 1\} \cap \{|N_2 \setminus \widehat N_2| = 1\}
$$
and
$$
B = \{\widehat N_1 = N_1'\} \cap \{\widehat N_2 = N_2'\}.
$$
Let $B^c$ denote the complement of event $B$.

Note that
$$
\Pr_{\theta}[B] = \Pr_{\theta}[B|A] \Pr_\theta[A]
= \left(\frac{2}{n} \right)^2 \Pr_{\theta}[A]
\le \frac{4}{n^2} \delta
$$
where the second equation holds because $B \subseteq A$ and every possible partition in $A$ has the same probability under $\theta$.

For every $g\in \reals$, we have
$$
\Pr_{\theta'}[L(\vec{S},\vec{y}) \le g] =  \Pr_{\theta'}[L(\vec{S},\vec{y}) \le g,B] + \Pr_{\theta'}[L(\vec{S},\vec{y}) \le g, B^c].
$$

Now, note
\begin{eqnarray*}
\Pr_{\theta'} [L(\vec{S},\vec{y}) \le  g , B] &= & \E_{\theta'}[\Ind(L(\vec{S},\vec{y})\leq g,B)]\\
&= & \E_\theta[e^{L(\vec{S},\vec{y})}\Ind(L(\vec{S},\vec{y})\leq g,B)]\\
&\le & \E_\theta[e^{g}\Ind(L(\vec{S},\vec{y})\leq g,B)]\\
&=& e^g \Pr_{\theta}[L(\vec{S},\vec{y})\leq g, B]\\
&\le & e^g \Pr_{\theta}[B]\\
&\le & e^g\frac{4}{n^2}\delta
\label{eq:bdl-1} 
\end{eqnarray*}
where in the second equation we use the standard change of measure argument.

Since the algorithm correctly classifies all the items with probability at least $1-\delta$, we have 
\begin{equation}
\Pr_{\theta'}[L(\vec{S},\vec{y}) \le g,B^c] \le \Pr_{\theta'}[B^c] \le \delta.
\label{eq:bdl-2}
\end{equation}

For $g = \log(n/\delta)$, from \eqref{eq:bdl-1} and \eqref{eq:bdl-2}, it follows that
$$
\Pr_{\theta'}[L(\vec{S},\vec{y}) \le \log(n/\delta)] \le \delta + \frac{4}{n} \leq \frac{1}{2}
$$
where the last inequality is by the conditions of the theorem.

\paragraph{Proof of Step 2.} If the observed comparison sets $S_1,S_2,\ldots,S_m$ are such that $S_t \cap \{1,n \} = \emptyset$, for every observation $t$, then we obviously have
$$
\log\left(\frac{p_{y_t,S_t}(\theta')}{p_{y_t,S_t}(\theta)}\right) = 0, \hbox{ for all } t.
$$
We therefore consider the case when $S_t \cap \{1,n \} \neq \emptyset$. 

Using (\ref{equ:ptaylor1}), (\ref{equ:ptaylor2}), and (\ref{equ:betaparam}), we have for every $S$ and $i\in S$,
\begin{equation}
|p_{i,S}(\theta')-p_{i,S}(\theta)| 
\le 2kb \frac{\partial p_k (\vec{0})}{\partial x_1} + 4 \beta bk
\le 3kb \frac{\partial p_k (\vec{0})}{\partial x_1} \label{eq:pdifer}
\end{equation}
where the last inequality is obtained from the condition of this theorem. 

From \eqref{eq:pdifer}, for every comparison set $S$ such that $S \cap \{1,n\} \neq \emptyset$, we have
\begin{align}
\sum_{i\in S}\left(p_{i,S}(\theta')-p_{i,S}(\theta)\right)^2 
\le& \sum_{i\in \{1,n \}\cap S}\left(p_{i,S}(\theta')-p_{i,S}(\theta)\right)^2 + 
\left(\sum_{i\in S\setminus \{1,n \} }p_{i,S}(\theta')-p_{i,S}(\theta)\right)^2 \cr
\le& 2\left(3kb \frac{\partial p_k (\vec{0})}{\partial x_1} \right)^2,
\label{eq:dsq}
\end{align}
which is because for every comparison set $S$ such that $1 \in S$,
$$
p_{1,S}(\theta') \le \frac{1}{k} \le p_{1,S}(\theta) \hbox{ and } 
p_{i,S}(\theta') \ge p_{i,S}(\theta)\quad \forall i \neq 1;
$$
and, for every comparison set $S$ such that $n\in S$,
$$
p_{n,S}(\theta') \ge \frac{1}{k} \ge p_{n,S}(\theta) \hbox{ and }
p_{i,S}(\theta') \le p_{i,S}(\theta)\quad\forall i \neq n.
$$

From \eqref{eq:pdifer} and the assumption of the theorem, we have
\begin{equation}
\min_{S}\min_{i\in S} p_{i,S}(\theta) = \min_{S: n\in S} p_{n,S}(\theta)
\ge \frac{1}{k} - 3kb \frac{\partial p_k (\vec{0})}{\partial x_1}
\ge \frac{1}{2k}.
\label{eq:pmin}
\end{equation}

For simplicity of notation, let
\begin{equation}
D = 3kb \frac{\partial p_k (\vec{0})}{\partial x_1}.
\end{equation}

Then, for all $S$ such that $S\cap \{1,n\} \neq \emptyset$, we have
\begin{equation}
\sum_{i\in S} p_{i,S}(\theta') \log\left(\frac{p_{i,S}(\theta')}{p_{i,S}(\theta)}\right) \le 2 k D^2
\label{eq:meanL}
\end{equation}
which is obtained from 
\begin{itemize}
\item[(i)] $p_{i,S}(\theta) \ge 1/(2k)$ for all $i\in S$ that holds by \eqref{eq:pmin},
\item[(ii)] $\sum_{i\in S}(p_{i,S}(\theta')-p_{i,S}(\theta))^2 = 2D^2$ from \eqref{eq:dsq}, 
\item[(iii)] 
$a\log\frac{a}{b} \le \frac{(a-b)^2}{2b}+a-b$. 
\end{itemize}
Similarly to \eqref{eq:meanL}, from (i) and (ii) and $a\left(\log\frac{a}{b}\right)^2 \le \frac{(a-b)^2}{a \wedge b}\left(1+\frac{|a-b|}{3(a\wedge b)}\right) $, we have
\begin{equation}
\sum_{i\in S} p_{i,S}(\theta')\left(\log\left(\frac{p_{i,S}(\theta')}{p_{i,S}(\theta)}\right)\right)^2 \le 8k D^2.
\label{eq:varL}
\end{equation}
Since 
$$
\Pr_{\theta'}[\{S_t\cap \{1,n \}\neq \emptyset\}] = 1- \frac{{n-2 \choose k}}{{n \choose k}} \le 2\frac{k}{n}
$$ 
and according to the model, the input observations are independent, from \eqref{eq:meanL} and \eqref{eq:varL}, we have
\begin{eqnarray}
\E_{\theta'}[L(\vec{S},\vec{y})] 
&=&  m \E_{\theta'}\left[\log\left(\frac{p_{y_1,S_1}(\theta')}{p_{y_1,S_1}(\theta)}\right) \right] \cr
&= & m\sum_{S: S\cap\{1,n\}\neq \emptyset} \Pr_{\theta'}[S_1 = S] \sum_{y\in S}p_{y,S}(\theta')\left[\log\left(\frac{p_{y,S}(\theta')}{p_{y,S}(\theta)}\right) \right] \cr
&\le& 4\frac{m}{n}k^2 D^2
\end{eqnarray}
and
\begin{eqnarray}
\sigma^2_{\theta'}[L(\vec{S},\vec{y})]
&= & m \sigma^2_{\theta'}\left[\log\left(\frac{p_{y_1,S_1}(\theta')}{p_{y_1,S_1}(\theta)}\right) \right] \cr
&\le & m \E_{\theta'}\left[\left(\log\left(\frac{p_{y_1,S_1}(\theta')}{p_{y_1,S_1}(\theta)}\right)\right)^2 \right] \cr
&= & m\sum_{S: S\cap\{1,n\}\neq \emptyset} \Pr_{\theta'}[S_1 = S]\sum_{y\in S}p_{y,S}(\theta')\left[\left(\log\left(\frac{p_{y,S}(\theta')}{p_{y,S}(\theta)}\right)\right)^2 \right] \cr
&\le & 16\frac{m}{n}k^2 D^2.
\end{eqnarray}

\section{Characterizations of $\partial p_k(\vec{0})/\partial x_1$}

In this section, we note several different representations of the parameter $\partial p_k(\vec{0})/\partial x_1$. 

First, note that
\begin{equation}
\frac{\partial p_k(\vec{0})}{\partial x_1} = \frac{1}{k-1}\int_\reals f(x)dF(x)^{k-1}.
\label{equ:ch1}
\end{equation} 
The integral corresponds to $\E[f(X)]$ where $X$ is a random variable whose distribution is equal to that of a maximum of $k-1$ independent and identically distributed random variables with cumulative distribution $F$. 

Second, suppose that $F$ is a cumulative distribution function with its support contained in $[-a,a]$, and that has a differentiable density function $f$. Then, we have
\begin{equation}
\frac{\partial p_k(\vec{0})}{\partial x_1} = A_{F,k} + B_{F,k}
\label{equ:ch2}
\end{equation}

where
$$
A_{F,k} = \frac{1}{k-1}f(a)
$$
and
$$
B_{F,k} = \frac{1}{k(k-1)}\int_{-a}^{a} (-f'(x))dF(x)^k.
$$

The identity (\ref{equ:ch2}) is shown to hold as follows. Note that
$$
\frac{d^2}{dx^2} F(x)^k = \frac{d}{dx}(k F(x)^{k-1}f(x))
= k(k-1)F(x)^{k-2}f(x)^2 + kF(x)^{k-1}f'(x).
$$
By integrating over $[-a,a]$, we obtain
$$
\frac{d}{dx}F(x)^k |_{-a}^{a} = k(k-1) \int_{-a}^{a} f(x)^2 F(x)^{k-2}dx + k \int_{-a}^{a} f'(x) F(x)^{k-1}dx. 
$$
Combining with the fact 
$$
\frac{d}{dx}F(x)^k |_{-a}^{a} = k f(x)F^{k-1}(x) |_{-a}^{a} = k f(a), 
$$
we obtain (\ref{equ:ch2}). 

Note that $B_{F,k} = \E[-f'(X)]/(k(k-1))$ where $X$ is a random variable with distribution that corresponds to that of a maximum of $k$ independent samples from the cumulative distribution function $F$. Note also that if, in addition, $f$ is an even function, then (i) $B_{F,k} \geq 0$ and (ii) $B_{F,k}$ is increasing in $k$.

Third, for any cumulative distribution function $F$ with an even density function $f$, we have $F(-x) = 1-F(x)$ for all $x\in \reals$. In this case, we have the identity
\begin{equation}
\frac{\partial p_k(\vec{0})}{\partial x_1} = \int_0^\infty f(x)^2 (F(x)^{k-2} + (1-F(x))^{k-2}) dx.
\label{equ:gammafeven}
\end{equation}

\section{Proof of Lemma~\ref{prop:gamma}}
\label{sec:gamma}

The upper bound follows by noting that that $B_{F,k}$ in (\ref{equ:ch2}) is such that $B_{F,k} = \Omega(1/k^2)$. Hence, it follows that 
$$
\gamma_{F,k} = O(1).
$$

The lower bound follows by noting that for every cumulative distribution function $F$ such that there exists a constant $C > 0$ such that $f(x) \leq C$ for all $x\in \reals$,
$$
\frac{\partial p_k(\vec{0})}{\partial x_1} = \int_\reals f(x)^2 F(x)^{k-2}dx
\leq C \int_\reals f(x) F(x)^{k-2}dx
= C\frac{1}{k-1}.
$$
Hence, $\gamma_{F,k} \geq (1/C)(k-1)/k^3 = \Omega(1/k^2)$.

\section{Derivations of parameter $\gamma_{F,k}$}

We derive explicit expressions for parameter $\gamma_{F,k}$ for our example generalized Thurstone choice models introduced in Section~\ref{sec:defs}

Recall from (\ref{equ:gamma}) that we have that
$$
\gamma_{F,k} = \frac{1}{(k-1)k^3}\frac{1}{(\partial p_k(\vec{0})/\partial x_1)^2}
$$
where
$$
\frac{\partial p_k(\vec{0})}{\partial x_1} = \int_\reals f(x)^2 F(x)^{k-2}dx
$$

\paragraph{Gaussian distribution} A cumulative distribution function $F$ is said to have a type-3 domain of maximum attraction if the maximum of $r$ independent and identically distributed random variables with cumulative distribution function $F$ has as a limit a double-exponential cumulative distribution function:
$$
e^{-e^{-\frac{x-a_r}{b_r}}}
$$
where
$$
a_r = F^{-1}\left(1-\frac{1}{r}\right)
$$
and
$$
b_r = F^{-1}\left(1-\frac{1}{er}\right) - F^{-1}\left(1-\frac{1}{r}\right).
$$

It is a well known fact that any Gaussian cumulative distribution function has a type-$3$ domain of maximum attraction. Let $\Phi$ denote the cumulative distribution function of a standard normal random variable, and let $\phi$ denotes its density. 

Note that 
\begin{eqnarray*}
\int_\reals \phi(x) d\Phi(x)^r
&\sim & \frac{1}{\sqrt{2\pi}}\int_\reals e^{-\frac{x^2}{2}} d(e^{-e^{-\frac{x-a_r}{b_r}}})\\
&=& \frac{1}{\sqrt{2\pi}}\int_0^{\infty} e^{-\frac{1}{2}(a_r + b_r \log(1/z))^2} e^{-z}dz\\
&=& \frac{1}{\sqrt{2\pi}} e^{-\frac{1}{2}a_r^2} \int_0^\infty z^{a_rb_r} e^{-\frac{1}{2}b_r^2 \log(1/z)^2} e^{-z}dz\\
&\leq & \frac{1}{\sqrt{2\pi}} e^{-\frac{1}{2}a_r^2} \int_0^\infty z^{a_rb_r}  e^{-z}dz\\
&=& \frac{1}{\sqrt{2\pi}} e^{-\frac{1}{2}a_r^2} \Gamma(a_r b_r + 1).
\end{eqnarray*}

Now, note that
$$
a_r \sim \sqrt{2\log(r)} \hbox{ and } b_r = \Theta(1), \hbox{ for large } r.
$$

It is readily checked that $e^{-a_r^2/2} \sim 1/r$ and $\Gamma(a_r b_r + 1) = O(r^\epsilon)$ for every constant $\epsilon > 0$. Hence, we have that
$$
\int_\reals \phi(x) d\Phi(x)^r = O(1/r^{1-\epsilon})
$$
and thus, $\partial p_k(\vec{0})/\partial x_1 = O(1/k^{2-\epsilon})$. Hence,
$$
\gamma_{F,k} = \Omega(1/k^{2\epsilon}).
$$

\paragraph{Double-exponential distribution} Note that $f(x) = \frac{1}{\beta} e^{-\frac{x + \beta \gamma}{\beta}} F(x)$. Hence, we have

\begin{eqnarray*}
\frac{\partial p_k(\vec{0})}{\partial x_1} &=& \int_\reals f(x)^2 F(x)^{k-2}dx\\
&=& \frac{1}{\beta^2}\int_\reals e^{-2\frac{x+\beta\gamma}{\beta}} F(x)^k dx\\
&=& \frac{1}{\beta} \int_0^\infty z e^{-k z}dz\\
&=& \frac{1}{\beta k^2}.
\end{eqnarray*}

\paragraph{Laplace distribution} Let $\beta = \sigma / \sqrt{2}$. Note that 
$$
F(x) = 1-\frac{1}{2}e^{-x/\beta} \hbox{ and } f(x) = \frac{1}{2\beta} e^{-x/\beta}, \hbox{ for } x\in \reals_+.
$$
\begin{eqnarray*}
A &=& \int_0^\infty f(x)^2 F(x)^{k-2}dx\\
&=& \int_0^\infty \left(\frac{1}{2\beta}\right)^2 e^{-2x/\beta} \left(1-\frac{1}{2}e^{-x/\beta}\right)^{k-2} dx\\
&=& \frac{1}{2\beta} \int_{1/2}^1 2(1-z) z^{k-2} dz\\
&=& \frac{1}{\beta} \left(\frac{1}{k-1}\left(1-\frac{1}{2^{k-1}}\right) - \frac{1}{k}\left(1-\frac{1}{2^{k}}\right)\right)\\
&=& \frac{1}{\beta k(k-1)}\left(1-\frac{k}{2^{k-1}} + \frac{k-1}{2^k}\right)
\end{eqnarray*}

and

\begin{eqnarray*}
B &=& \int_0^\infty f(x)^2 (1-F(x))^{k-2}dx\\
&=& \int_0^\infty \left(\frac{1}{2\beta}\right)^2 e^{-2x/\beta} \frac{1}{2^{k-2}}e^{-(k-2)x/\beta} dx\\
&=& \frac{1}{\beta^2 2^k} \int_0^\infty e^{-kx/\beta}dx\\
&=& \frac{1}{\beta k 2^k}.
\end{eqnarray*}

Combining with (\ref{equ:gammafeven}), we obtain
$$
\frac{\partial p_k(\vec{0})}{\partial x_1} = A+B = \frac{1}{\beta k(k-1)}\left(1-\frac{1}{2^{k-1}}\right).
$$

\paragraph{Uniform distribution} Note that
\begin{eqnarray*}
\frac{\partial p_k(\vec{0})}{\partial x_1} &=& \int_\reals f(x)^2 F(x)^{k-2}dx\\
&=& \frac{1}{(2a)^2}\int_{-a}^a \left(\frac{x+a}{2a}\right)^{k-2}dx\\
&=& \frac{1}{2a} \int_0^1 z^{k-2}dz\\
&=& \frac{1}{2a (k-1)}.
\end{eqnarray*}

\end{document}